\documentclass[11pt,a4j,twoside]{article}
%2016.8.1. 第一版

\usepackage{amsmath,amssymb}
\usepackage{amsthm}
\usepackage[abbrev]{amsrefs}
\usepackage{latexsym}
\usepackage{graphicx}

\usepackage[english]{babel}
\usepackage{fancyhdr}
\usepackage{a4wide}

\usepackage{bm}

\pagestyle{plain}
\oddsidemargin 23pt
\evensidemargin 18pt
\setlength{\textheight}{644pt}
\setlength{\topmargin}{-20pt}
\setlength{\abovedisplayskip}{2pt} % 上部のマージン
\setlength{\belowdisplayskip}{2pt} % 下部のマージン

\allowdisplaybreaks[1]%式変形の改ページ許可

\numberwithin{equation}{section}

\newtheorem{thm}{Theorem}[section]
\newtheorem{prop}[thm]{Proposition}
\newtheorem{lem}[thm]{Lemma}

\newtheorem{dfn}[thm]{Definition}
\theoremstyle{definition} % 以下の環境では英文がイタリックにならないようにするため. 
\newtheorem{ex}[thm]{Example}
\newtheorem{rem}[thm]{Remark}

\newcommand\ND{\newcommand}

\ND\lref[1]{Lemma~\ref{#1}}
\ND\tref[1]{Theorem~\ref{#1}}
\ND\pref[1]{Proposition~\ref{#1}}
\ND\sref[1]{Section~\ref{#1}}
\ND\ssref[1]{Subsection~\ref{#1}}
\ND\aref[1]{Appendix~\ref{#1}}
\ND\rref[1]{Remark~\ref{#1}}
\ND\cref[1]{Corollary~\ref{#1}}
\ND\eref[1]{Example~\ref{#1}}
\ND\fref[1]{Fig.\ {#1} }
\ND\lsref[1]{Lemmas~\ref{#1}}
\ND\tsref[1]{Theorems~\ref{#1}}
\ND\dref[1]{Definition~\ref{#1}}
\ND\psref[1]{Propositions~\ref{#1}}
\ND\rsref[1]{Remarks~\ref{#1}}
\ND\sssref[1]{Subsections~\ref{#1}}
\ND\esref[1]{Examples~\ref{#1}}
\ND\asref[1]{Assumption~\ref{#1}}

\newcommand{\ep}{\varepsilon}

\newcommand{\h}{\quad}
\newcommand{\dis}{\displaystyle}
\newcommand{\cl}{c\`adl\`ag\ }
\newcommand{\Prob}{\mathbb{P}}

\title{Horizontal $\Delta$-semimartingales on orthonormal frame bundles}
\date{}
\author{Fumiya Okazaki}
\begin{document}
\maketitle
\footnote{Department of Mechanical Engineering, Graduate School of Engineering, The University of Tokyo, Tokyo, Japan}
\footnote{MSC2020 Subject Classifications: 60H05, 60G44}
\footnote{Email address: fumiya-okazaki@g.ecc.u-tokyo.ac.jp}
\footnote{Keywords and phrases: Stochastic differential equations on manifolds; Riemannian manifolds; Jump processes; Semimartingales; Orthonormal frame bundles}
\renewcommand{\thepage}{\arabic{page}}
\begin{abstract}
In this article, we deal with stochastic horizontal lifts and anti-developments of semimartingales with jumps on complete and connected Riemannian manifolds without any assumption for their curvatures. We prove two one-to-one correspondences among some classes of discontinuous semimartingales on Riemannian manifolds, orthonormal frame bundles and Euclidean spaces by using the stochastic differential geometry with jumps introduced by Cohen (1996). Both of these two results are extension of the one shown in Pontier-Estrade (1992). The first result is the correspondence in the case where jumps of semimartingales are regarded as initial velocities of geodesics which are not necessarily minimal. In the second result, we also established the correspondence in the situation where jumps of semimartingales are given by connection rules, but we impose the condition that the jumps of semimartingales are small. The latter result enables us to construct martingales for a given connection rule with small jumps on any compact manifold from local martingales on a Euclidean space through horizontal semimartingales on orthonormal semimartingales.  
\end{abstract}

\section{Introduction and main theorems}\label{intro}
%Stochastic parallel displacement was first considered in \cite{Ito62} and 
A stochastic parallel displacement of a frame along a diffusion was defined in \cites{IkeWata, Mal}. This can be regarded as the horizontal lift of a diffusion on a manifold to a frame bundle. The horizontal lift of a continuous semimartingale on a manifold to more general principal bundles was considered in \cites{Shi}. The horizontal lift of semimartingales  is an extension of that of smooth curves on manifolds. Moreover, by employing horizontal lifts, we can regard continuous semimartingales on manifolds as developments of continuous semimartingales on tangent spaces above initial values, which are called anti-developments. Horizontal lifts and anti-developments of discontinuous semimartingales are considered in \cite{PE}, which deals with discontinuous semimartingales on manifolds whose jumps can be connected by unique minimal geodesics. The aim of this article is to extend the result in \cite{PE} so that we can construct anti-developments of discontinuous semimartingales in other situations.

Throughout this paper, we always assume that we are given a filtered probability space $(\Omega, \mathcal{F},\{ \mathcal{F}_t\}_{0\leq t\leq \infty}, P)$ and the usual hypotheses for $\{ \mathcal{F}_t\}_{0\leq t\leq \infty}$ hold. A \cl process $X$ valued in a manifold $M$ is called an $M$-valued semimartingale if $f(X)$ is an $\mathbb{R}$-valued semimartingale for all $f\in C^{\infty}(M)$. To begin with, let us recall basic facts about continuous semimartingales on manifolds. It is known that for an $M$-valued continuous semimartingale $X$, we can define the Stratonovich integral of 1-form $\phi$ along $X$ and the quadratic variation of 2-tensor $\psi$. They are denoted by $\displaystyle \int \phi(X)\circ dX$ and $\displaystyle \int \psi(X)\, d[X,X]$, respectively. Furthermore, given a torsion-free connection on $M$, we can define the It\^o integral of 1-form $\phi$ denoted by $\displaystyle \int \phi(X)dX$ and the equation
\[
\int \phi(X)\circ dX=\int \phi(X) \, dX+\frac{1}{2}\int \nabla \phi (X)\, d[X,X]
\]
holds. Let $\pi :\mathcal{O}(M) \to M$ be an orthonormal frame bundle. AIt is known that given an $M$-valued continuous semimartingale $X$, an $\mathcal{O}(M)$-valued $\mathcal{F}_0$-measurable random variable $u_0$ such that $\pi u_0=X_0$ and a connection $\nabla$ on $M$, the $\mathcal{O}(M)$-valued continuous semimartingale $U$ satisfying $U_0=u_0$, $\pi U=X$ and
\[
\int \theta \circ dU=0
\]
for the connection form $\theta$ corresponding to $\nabla$ is uniquely determined. Furthermore, for an $\mathcal{O}(M)$-valued horizontal semimartingale $U$, the stochastic integral of the solder form $\mathfrak{s}$ along $U$ yields a continuous semimartingale on a Euclidean space. This is called the anti-development of $U$ or of $X:=\pi U$. Conversely, we can construct an $\mathcal{O}(M)$-valued horizontal semimartingale from a continuous semimartingale $W$ starting at $0$ on a Euclidean space. In fact, there exists an $\mathcal{O}(M)$-valued continuous semimartingale $U$ satisfying
\[
F(U_t)-F(U_0)=\int_0^t L_kF(U_s)\circ dW^k_s,\ F\in C^{\infty}(\mathcal{O}(M))
\]
by the existence and uniqueness of solutions of stochastic differential equations (SDE's) on manifolds, where $L_k$ is the canonical horizontal vector field on $\mathcal{O}(M)$ and we have used the Einstein summation convention. It immediately follows that the solution is horizontal. Furthermore, by projecting $U$ onto the base space $M$, we obtain a continuous semimartingale on $M$. In \cite{Hsu}, we can find the proof of the one-to-one correspondence using an embedding of the manifold.

Next, in order to state our main theorems, we briefly describe our setting including discontinuous semimartingales on manifolds. Let $(M,g)$ be a complete and connected Riemannian manifold. In \cite{Pic1}, maps from $M\times M$ to $TM$ called connection rules are introduced and the It\^o integral of 1-forms is defined through connection rules. Given a connection rule $\gamma$, we can determine the direction of jumps of a semimartingale $X$ by $\Delta X_s=\gamma (X_{s-},X_s)$. More generally, a $TM\times M$-valued processes called $\Delta$-semimartingales are introduced in \cite{Pic1}, which are pairs of $M$-valued semimartingales $X$ and jump directions $\Delta X_s\in T_{X_{s-}}M$. Let $\phi$ be a $T^*M$-valued  c\`{a}dl\`{a}g process above $X$, that is, $\phi_s\in T_{X_s}^*M$ for $s\geq 0$. Given a $\Delta$-semimartingale $(\Delta X,X)$ and a torsion-free connection $\nabla$, the It\^o integral of $\phi$ along $(\Delta X,X)$ is defined in \cite{Pic1} and denoted by $\displaystyle \int \phi _-\, dX$. In particular, if $\Delta X$ is written as $\Delta X=\gamma (X_-,X)$ for some connection rule $\gamma$, we denote the stochastic integral of $\phi$ along $(\gamma(X_-,X),X)$ by $\dis \int \phi_-\, \gamma dX$. In a similar way, for a $\Delta$-semimartingale $(\Delta X,X)$, the quadratic variation of a $T^*M\otimes T^*M$-valued c\`{a}dl\`{a}g process $\psi$ above $X$ is also defined and denoted by $\displaystyle \int \psi_{s-}\, d[X,X]_s$. Furthermore, we define the Stratonovich integral of 1-form $\alpha$ as
\[
\int \alpha (X)\circ dX=\int \alpha (X_-)\, dX+\frac{1}{2}\int \nabla \alpha (X_-)\, d[X,X]^c,
\]
where $\displaystyle \int \nabla \alpha (X_-)\, d[X,X]^c$ is the continuous part of the quadratic variation of $\nabla \alpha$.

In this article, we show the two one-to-one correspondences between $\Delta$-semimartingales on manifolds, $\Delta$-horizontal semimartingales on orthonormal frame bundles, and semimartingales on Euclidean spaces starting at $0$. We denote by $\theta$ and $\mathfrak{s}$ the connection form with respect to the Levi-Civita connection and the solder form, respectively. The solder form $\mathfrak{s}$ is a 1-form on $\mathcal{O}(M)$ valued in $\mathbb{R}^d$, where $d=\text{dim}M$. Their precise definition is given in \sref{bundlemetric}. Define the Riemannian metric $\tilde g$ on $\mathcal{O}(M)$ by
\[
\tilde g:=\sum_{\alpha=1}^{\frac{d(d-1)}{2}}\theta^{\alpha}\otimes \theta^{\alpha}+\sum_{k=1}^d\mathfrak{s}^k\otimes \mathfrak{s}^k.
\]
Denote the corresponding Levi-Civita connection by $\tilde \nabla$. Fundamental properties of $\tilde g$ are also given in \sref{bundlemetric}. By employing the connection on the orthonormal frame bundle, we can define the It\^o integral of 1-forms and the quadratic variation of 2-tensors on $\mathcal{O}(M)$ with respect to $\tilde \nabla$. To state our results more precisely, we introduce two more definitions.
\begin{dfn}
\begin{itemize}
\item [(1)]
Let $(\Delta U,U)$ be an $\mathcal{O}(M)$-valued semimartingale. $(\Delta U,U)$ is said to be horizontal if
\[
\int_0^t \theta (U_s)\circ dU_s=0
\]
for all $t\geq 0$.
\item [(2)]
Let $(\Delta U,U)$ be a horizontal $\Delta$-semimartingale on $\mathcal{O}(M)$. The anti-development of $(\Delta U,U)$ is defined by
\[
W=\int \mathfrak{s}(U)\circ dU.
\]
\end{itemize}
\end{dfn}
In particular, if $(\Delta U,U)$ is horizontal, then
\[
\Delta \int_0^t \theta (U_s)\circ dU_s=\langle \theta (U_{s-}),\Delta U_s\rangle=0.
\]
Thus $\Delta U_s$ is a horizontal tangent vector.
\begin{dfn}
Let $(\Delta U,U)$ be an $\mathcal{O}(M)$-valued $\Delta$-horizontal semimartingale and $(\Delta X,X)$ an $M$-valued $\Delta$-semimartingale. $(\Delta U,U)$ will be called a horizontal lift of $(\Delta X,X)$ if
\[
\pi U=X,\ \pi_*\Delta U=\Delta X.
\]
\end{dfn}
Note that given an $\mathbb{R}^d$-valued semimartingale $W$ with $W_0=0$ and an $\mathcal{O}(M)$-valued $\mathcal{F}_0$-measurable random variable $U_0$, there exists an $\mathcal{O}(M)$-valued semimartingale $U$ such that for $F\in C^{\infty}(\mathcal{O}(M))$,
\begin{align*}
F(U_t)-F(U_0)&=\int_0^t L_kF(U_{s-})\circ dW^k_s\\
&+\sum_{0<s\leq t} \{ F(U_s)-F(U_{s-})-L_kF(U_{s-})\Delta W^k_s\}
\end{align*}
by \cite{Co1} or \eref{exmarcus}. $U$ is called a development of $W$. In our first theorem below, we consider $\Delta$-semimartingales satisfying
\begin{align}
\exp_{X_{s-}}\Delta X_s=X_s, \label{eq:v}
\end{align}
where $\exp$ is the exponential map with respect to the Levi-Civita connection.
\begin{thm}\label{main}
\begin{itemize}
\item[(1)]Let $W$ an $\mathbb{R}^d$-valued semimartingale with $W_0=0$ and $U$ a development of $W$. Suppose that $U$ does not explode in finite time. If we set $\Delta U_s=\Delta W^k_s L_k(U_{s-})$, then $(\Delta U,U)$ is a $\Delta$-semimartingale satisfying \eqref{eq:v} on $\mathcal{O}(M)$ and it holds that
\[
\int \theta (U_-)\circ dU=\int \theta (U_-)\, dU=0,
\]
\[
\int \mathfrak{s} (U_-)\circ dU=\int \mathfrak{s} (U_-)\, dU=W.
\]
\item[(2)]Let $(\Delta U, U)$ be a horizontal semimartingale satisfying \eqref{eq:v} on $\mathcal{O}(M)$ and $W$ the anti-development of $(\Delta U,U)$. If $(\Delta V, V)$ is the development of $W$ with $V_0=U_0$, then $U=V,\ \Delta U=\Delta V$, $\Prob$-a.s.
\item[(3)]Let $(\Delta U,U)$ be a horizontal $\Delta$-semimartingale satisfying \eqref{eq:v} on $\mathcal{O}(M)$. Set $X:=\pi U$, $\Delta X:=\pi_*\Delta U$. Then $(\Delta X,X)$ is a $\Delta$-semimartingale satisfying \eqref{eq:v} on $M$.
\item[(4)]Let $(\Delta X,X)$ be an $M$-valued $\Delta$-semimartingale and $u_0$ an $\mathcal{O}_{X_0}(M)$-valued $\mathcal{F}_0$-measurable random variable. Then there exists uniquely a horizontal lift $(\Delta U,U)$ of $(\Delta X,X)$ with $U_0=u_0$ satisfying \eqref{eq:v}. Furthermore, let $(\varepsilon_1,\dots ,\varepsilon_d)$ be a standard basis of $\mathbb{R} ^d$ and $(\varepsilon^1,\dots , \varepsilon ^d)$ its dual basis. Then it holds that
\[
W_t^i=\int_0^tU_{s-}\varepsilon ^i\, dX_s,
\]
where $U_{t-}:(\mathbb{R}^d)^* \to T_{X_{t-}}^*M$ is defined by
\[
\langle U_{t-}a,v\rangle =\langle a,U_{t-}^{-1}v\rangle,\ a\in (\mathbb{R}^d)^*,\ v\in T_{X_{t-}}^*M,
\]
and $W$ is the anti-development of $U$.
\end{itemize}
\end{thm}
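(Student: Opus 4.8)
\emph{Geometric input.} The argument relies on two properties of $(\mathcal{O}(M),\tilde g)$ recorded in \sref{bundlemetric}.
(A) For each $u\in\mathcal{O}(M)$ and each horizontal $\xi=a^kL_k(u)\in T_u\mathcal{O}(M)$, the geodesic $r\mapsto\exp^{\tilde\nabla}_u(r\xi)$ is the integral curve through $u$ of the vector field $a^kL_k$; consequently it stays horizontal and in $\mathcal{O}(M)$, and the symmetric part of $\tilde\nabla_{L_i}L_j$ vanishes, so that $(\tilde\nabla\theta^\alpha)(L_i,L_j)$ and $(\tilde\nabla\mathfrak{s}^k)(L_i,L_j)$ are antisymmetric in $i,j$.
(B) $\pi\colon(\mathcal{O}(M),\tilde g)\to(M,g)$ is a Riemannian submersion; hence $\pi$ sends horizontal $\tilde g$-geodesics to $g$-geodesics, i.e.\ $\pi\exp^{\tilde\nabla}_u\xi=\exp_{\pi u}\pi_*\xi$ for $\xi$ horizontal, and $(\tilde\nabla\,\pi^*\psi)$ restricted to horizontal vectors equals $\pi^*(\nabla\psi)$ for any $1$-form $\psi$ on $M$.
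I also use the structure relations $\langle\theta^\alpha,L_k\rangle=0$, $\langle\mathfrak{s}^k,L_j\rangle=\delta^k_j$, $\mathfrak{s}^k(u)=\pi^*(u\varepsilon^k)$, that $(\mathfrak{s}^k,\theta^\alpha)$ is a global coframe of $\mathcal{O}(M)$, the It\^o formula, the fact that the jump at $s$ of $\int\beta(Y)\,dY=\int\beta(Y)\circ dY$ is $\langle\beta(Y_{s-}),\Delta Y_s\rangle$, and the associativity of Stratonovich integration with jumps.

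\emph{Parts (1)--(3).} In (1), since $U$ is a development, its jump at $s$ is the time-$1$ value of the integral curve of $\Delta W^k_sL_k$ from $U_{s-}$, which by (A) equals $\exp^{\tilde\nabla}_{U_{s-}}(\Delta W^k_sL_k(U_{s-}))=\exp^{\tilde\nabla}_{U_{s-}}\Delta U_s$; thus $(\Delta U,U)$ is a $\Delta$-semimartingale satisfying \eqref{eq:v} on $(\mathcal{O}(M),\tilde g)$. Next, from the defining SDE, the It\^o formula, the module property of integrals of $1$-forms and the vanishing of the symmetric part of $\tilde\nabla_{L_i}L_j$ one obtains the transformation rule $\int\beta(U)\,dU=\int\langle\beta,L_k\rangle(U_-)\,dW^k$ and $\int\beta(U)\circ dU=\int\langle\beta,L_k\rangle(U_-)\circ dW^k$ for every $1$-form $\beta$ on $\mathcal{O}(M)$; applied to $\beta=\theta^\alpha$ and $\beta=\mathfrak{s}^k$ together with $\langle\theta^\alpha,L_k\rangle=0$, $\langle\mathfrak{s}^k,L_j\rangle=\delta^k_j$ and $W_0=0$, this gives at once $\int\theta(U)\,dU=\int\theta(U)\circ dU=0$ and $\int\mathfrak{s}(U)\,dU=\int\mathfrak{s}(U)\circ dU=W$. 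For (2), $\Delta U_s$ is horizontal (as noted after the first definition), so $\Delta U_s=c^k_sL_k(U_{s-})$ with $c^k_s=\langle\mathfrak{s}^k(U_{s-}),\Delta U_s\rangle=\Delta W^k_s$; writing $dF=(L_kF)\mathfrak{s}^k+h_\alpha\theta^\alpha$ in the global coframe and using associativity of the Stratonovich integral with $\int\mathfrak{s}^k(U)\circ dU=W^k$ (the anti-development) and $\int\theta^\alpha(U)\circ dU=0$ (horizontality) gives $\int dF(U)\circ dU=\int L_kF(U_-)\circ dW^k$; since $\langle dF(U_{s-}),\Delta U_s\rangle=L_kF(U_{s-})\Delta W^k_s$, the It\^o formula shows that $F(U)$ satisfies the development SDE driven by $W$, so $U=V$, $\Delta U=\Delta V$ by uniqueness of that SDE (\cite{Co1}). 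For (3), $X=\pi U$ is a semimartingale because $(f\circ\pi)(U)=f(X)$ is a real semimartingale for all $f\in C^\infty(M)$, $\Delta X_s=\pi_*\Delta U_s\in T_{X_{s-}}M$, and as $\Delta U_s$ is horizontal, (B) with \eqref{eq:v} for $U$ yields $X_s=\pi\exp^{\tilde\nabla}_{U_{s-}}\Delta U_s=\exp_{X_{s-}}\Delta X_s$, i.e.\ $(\Delta X,X)$ satisfies \eqref{eq:v}.

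\emph{Part (4).} Here $(\Delta X,X)$ satisfies \eqref{eq:v} by the standing hypothesis; by (3) this is in any case necessary for a horizontal lift satisfying \eqref{eq:v} to exist. \emph{Uniqueness:} if $(\Delta U,U)$ and $(\Delta U',U')$ are two such lifts with common $U_0$, put $T=\inf\{t:(U_t,U_{t-})\neq(U'_t,U'_{t-})\}$; on $[0,T)$ they agree, so $U_{T-}=U'_{T-}$, and at $T$ either $T$ is a jump time, in which case $\Delta U_T$ and $\Delta U'_T$ are both the unique horizontal lift of $\Delta X_T$ to $U_{T-}$ and hence $U_T=\exp^{\tilde\nabla}_{U_{T-}}\Delta U_T=U'_T$ by \eqref{eq:v}, or $T$ is a continuity time, in which case $U_T=U_{T-}=U'_{T-}=U'_T$; since moreover between jumps both processes satisfy the same linear Stratonovich equation for their frame coefficients (the continuous part of horizontality), a no-first-disagreement argument based on its pathwise uniqueness forces $T=\infty$. \emph{Existence:} along a sequence of stopping times exhausting $[0,\infty)$ on which $X$ stays in a coordinate chart, let $U=(X,e)$ with the frame $e$ solving --- in Cohen's framework (\cite{Co1}) --- the classical horizontal-lift equation along the continuous part of $X$, and at each jump time $s$ set $U_s:=\exp^{\tilde\nabla}_{U_{s-}}\widetilde{\Delta X_s}$, $\Delta U_s:=\widetilde{\Delta X_s}$, where $\widetilde{\Delta X_s}\in T_{U_{s-}}\mathcal{O}(M)$ is the horizontal lift of $\Delta X_s$; then $U_s\in\mathcal{O}(M)$ by (A), and $\pi U_s=\exp_{X_{s-}}\Delta X_s=X_s$ by (B) and \eqref{eq:v} for $X$, so $\pi U=X$, $\pi_*\Delta U=\Delta X$; the local pieces patch since $X$ does not explode and the fibres of $\pi$ are compact; $U$ is horizontal because its continuous part moves only along the $L_k$ and $\langle\theta^\alpha(U_{s-}),\Delta U_s\rangle=0$; and \eqref{eq:v} for $U$ is the jump rule. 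Finally, $\mathfrak{s}^k(U_{s-})=\pi^*(U_{s-}\varepsilon^k)$ and, as horizontality of $U$ makes $[U,U]^c$ horizontal-valued, the pushdown formula for It\^o integrals along horizontal semimartingales --- a consequence of (B) and $\pi_*[U,U]^c=[X,X]^c$ --- gives $\int\mathfrak{s}^k(U)\,dU=\int U_{s-}\varepsilon^k\,dX$; since $(\tilde\nabla\mathfrak{s}^k)(L_i,L_j)$ is antisymmetric, $\int\mathfrak{s}^k(U)\circ dU=\int\mathfrak{s}^k(U)\,dU$, so $W^k_t=\int_0^t U_{s-}\varepsilon^k\,dX_s$.

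\emph{Main obstacle.} The one analytically substantial step is the existence part of (4): producing a genuine $\mathcal{O}(M)$-valued $\Delta$-semimartingale horizontal lift of an arbitrary $M$-valued $\Delta$-semimartingale requires interleaving Cohen's solution of the continuous horizontal-lift SDE (with the chart-patching performed via an exhaustion of $M$) with the intrinsic jump prescription through $\exp^{\tilde\nabla}$, checking at each jump that $U_s$ remains an orthonormal frame over $X_s$ and that the compensated jump sums converge; the rest is direct computation with the structure equations of $\tilde g$ recorded in (A)--(B), together with the uniqueness results of \cite{Co1} and \cite{Pic1}.
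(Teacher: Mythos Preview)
Your treatment of parts (1)--(3) is close to the paper's, though you skip one point the paper makes explicit: to conclude that $(\Delta U,U)$ (and then $(\Delta X,X)$) is a $\Delta$-semimartingale you must verify condition (iv) of \dref{Pic3.1}, i.e.\ absolute summability of $\langle\phi_{s-},\Delta U_s-\tilde\gamma(U_{s-},U_s)\rangle$. The paper does this via \lref{finite}: since $\sum_{s\le t}|\Delta W_s|^2<\infty$ and $|\Delta U_s|=|\Delta W_s|$, the jump $\Delta U_s$ differs from the minimal-geodesic direction only finitely often on compacts, and that forces (iv). You simply assert the conclusion.

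The substantive gap is in part (4), existence. Your construction ``solve the continuous horizontal-lift SDE along the continuous part of $X$, and at each jump time prescribe $U_s=\exp^{\tilde\nabla}_{U_{s-}}\widetilde{\Delta X_s}$'' does not make sense for a general c\`adl\`ag semimartingale: $X$ may have infinitely many jumps on every interval, so there is no ``continuous part of $X$'' as an $M$-valued process along which one could run a separate SDE and then interleave jumps. Moreover, Cohen's SDE requires a constraint coefficient $\phi(x,u,y)$ depending on the \emph{endpoints} $(x,y)$, not on the tangent datum $\Delta X_s$; so one cannot directly encode the prescription ``jump along the horizontal lift of $\Delta X_s$'' as a Cohen SDE driven by $X$. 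The paper avoids both problems by a genuinely different route: it fixes $\gamma\in C_g$, solves the Cohen SDE with constraint coefficient $\phi(x,u,y)=\tilde\eta_{x,y}(1)$ built from $\gamma$ to obtain a first lift $U^\gamma$ (this handles \emph{all} jumps, small and large, simultaneously), and then uses \lref{finite} to see that $\Delta X_s\neq\gamma(X_{s-},X_s)$ only at finitely many times $T_1<T_2<\cdots$ on compacts; the desired lift is $U=U^\gamma a$ where $a$ is an $O(d)$-valued \emph{step} process jumping only at the $T_i$, defined by holonomy of the loop $c_s^2\cdot(c_s^1)^{-1}$ (equations \eqref{bt}--\eqref{atbt}). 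Your uniqueness argument is likewise too sketchy: the paper shows that any horizontal lift is $U^\gamma a$ for some $O(d)$-valued $a$, proves (\lref{lemj}, via $\int j\circ da=0$) that $a$ is constant on each $(T_i,T_{i+1})$, and then identifies the jumps of $a$; your first-disagreement-time argument does not establish that two lifts agreeing at a time must continue to agree, since ``the same linear Stratonovich equation for the frame coefficients'' is not available between arbitrary jump times when small jumps accumulate.
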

This result is a generalization of the result of \cite{PE}. In \cite{PE} this kind of a result was shown only in the case where the jumps of semimartingales can be uniquely connected by a minimal geodesic, but our result includes some cases where this assumption is not satisfied. These situations naturally happen if we consider stchastic differential equations on orthonormal frame bundles on general complete Riemannian manifolds. L\'evy processes on Riemannian manifolds constructed in \cite{App95} are typical examples. %It is mentioned in \cite{Co2} that one can construct a horizontal lift to the orthonormal frame bundle without the geodesic assumption. Our method is based on \cite{Co2}, but our result includes the uniqueness of the horizontal lift in the framework of $\Delta$-semimartingales.\\

Our next result includes cases of $\Delta$-semimartingales of which jumps are not described by geodesics. These situations are also significant because in our recent studies \cites{Oka23, Oka24}, it was shown that jumps of martingales on Riemannian submanifolds of Euclidean spaces associated with harmonic maps with respect to non-local Dirichlet forms are not described through geodesics but through embedding. The typical examples of these kinds of harmonic maps are fractional harmonic maps introduced in \cites{Lio, Lio2}, which are critical points of the fractional Dirichlet energy. In \tref{main2} below, we denote geodesic balls on $\mathbb{R}^d$, $M$ and $T_xM$ with radius $r>0$ by $B_r(z)$, $B^M_r(x)$ and $B_r^{T_xM}(v)$, respectively, where $z\in \mathbb{R}^d$, $x\in M$ and $v\in T_xM$.
Our second result is stated as follows. 
\begin{thm}\label{main2}
Let $(M,g)$ be a compact Riemannian manifold and $\gamma \in C^{\infty}(M \times M ; TM)$ a connection rule which induces the Levi-Civita connection. Then there exist $\delta_0=\delta _0(M, g, \gamma)$, $\delta = \delta (M, \gamma) >0$ and $h \in C^{\infty}(\mathcal{O}(M) \times B_{\delta}(0) ; B_{\delta_0}(0))$ such that if we extend the map $h$ to a map on $\mathcal{O}(M) \times \mathbb{R}^d$ by setting $0$ on $\mathcal{O}(M)\times B_{\delta}(0)^c$ and set the map $\varphi \colon \mathbb{R}^d \times \mathcal{O}(M) \times \mathbb{R}^d \to \mathcal{O}(M)$ by
\begin{align}\label{coefficient}
\varphi (z, u, w):=\mathrm{Exp}_u \left( h^k(u,w-z) L_k \right),
\end{align}
then $\varphi$ is a constraint coefficient (see \dref{constraint}) and the following hold:
\begin{itemize}
\item[(1)]Let $Z$ be a semimartingale on $\mathbb{R}^d$ with
\begin{align}\label{jumpbdd}
\sup_{0\leq t<\infty} |\Delta Z_t| < \delta, \ \Prob \text{-a.s.}
\end{align}
Then if $U$ is the solution of the SDE
\begin{align}\label{modifiedSDE}
\overset{\triangle}{d}U=\varphi(U,\overset{\triangle}{d}Z)
\end{align}
with an initial value $U_0$ (see \dref{solution} for the notation in \eqref{modifiedSDE}), the process $X=\pi (U)$ satisfies
\begin{align}\label{intconne}
\int \phi_- \, \gamma dX=\int \langle U_-^{-1}\phi_-, L_k(U_-) \rangle \, dZ^k
\end{align}
for any $T^*M$-valued \cl process $\phi$ above $X$.
\item[(2)]Let $X$ be an $M$-valued semimartingale satisfying
\[
X_t\in \left(\gamma_{X_{t-}}|_{B^M_{\delta_0}(X_{t-})}\right)^{-1}\left(B^{T_{X_{t-}}M}_{\delta}(0)\right)\ \text{for all}\ t\geq 0,\ \Prob \text{-a.s.},
\]
where $\gamma_x:=\gamma(x,\cdot)$ for $x\in M$. Then there exists a semimartingale $Z$ on $\mathbb{R}^d$ such that if $U$ is the solution of \eqref{modifiedSDE} with an initial value $U_0$, then $(U_-h(U_-,\Delta Z),U)$ is a horizontal lift of $X$ and $X$ satisfies \eqref{intconne}.
\end{itemize}
\end{thm}
The precise construction of $\delta_0$, $\delta$ and $h$ appearing in \tref{main2} will be given in the proof. In the case where $M$ is a sphere, we can write them explicitly. See \eref{spmar}. In particular, by taking $Z$ in such a way that $Z$ is a local martingale satisfying \eqref{jumpbdd} in \tref{main2}, we can construct martingales on compact Riemannian manifolds with respect to an arbitrary connection rule which induces Levi-Civita connection from local martingales on Euclidean spaces. As for the precise definition of discontinuous martingales on manifolds, see \dref{defmartingale}.\\

We give an outline of the paper. First we recall connection rules introduced in \cite{Pic1} and define the stochastic integral of 1-forms and the quadratic variation of 2-tensors along $\Delta$-semimartingales on manifolds in \sref{stochasticintegral}. We also recall stochastic differential geometry with jumps developed in \cites{Co1, Co2}. We give proofs of our main results in \sref{sectionmain}. We summarize some facts and simple calculation regarding orthonormal frame bundles and Riemannian metrics on them in \sref{bundlemetric}.
%%%%%%%%%%%%%%%%%%%%%%%%%
\section{Preliminaries on stochastic integrals and stochastic differential equations}\label{stochasticintegral}
To start with, we recall connection rules introduced in \cite{Pic1}. A connection rule can determine the direction of jumps on a manifold, which is necessary for the definition of the stochastic integral. First, we let $M$ be a $d$-dimensional $C^{\infty}$ manifold.
\begin{dfn}
A mapping $\gamma:M\times M\to TM$ is a connection rule if it is measurable, $C^2$ on a neighborhood of the diagonal set of $M \times M$, and if it satisfies, for all $x,y\in M$,
\begin{enumerate}
\item[(i)] $\gamma (x,y)\in T_xM;$
\item[(ii)] $\gamma (x,x)=0;$
\item[(iii)] $d \gamma (x,\cdot)_x=\mathrm{id}_{T_xM}.$
\end{enumerate}
\end{dfn}
If $(M,g)$ is a Riemannian manifold, we denote by $C_g$ the set of connection rules $\gamma$ which satisfy the following: for all $x,y\in M$, $\exp_x t\gamma (x,y),\ t\in [0,1]$ is a minimal geodesic connecting $x$ and $y$. If $M$ is a strongly convex Riemannian manifold, $\gamma \in C_g$ can be written as
\[
\gamma(x,y)=\exp_x^{-1}y.
\]
In general, as we can observe it in the \pref{connection} below, if $(M,g)$ is a complete connected Riemannian manifold, we can take a connection rule $\gamma$ such that $\gamma (x,y)$ is an initial velocity of a minimal geodesic connecting $x$ and $y$ for all $x,y\in M$ even though the cut locus is not empty. We use the notion $\| \cdot\|$ as the norm with respect to the Riemannian metric.
\begin{prop}\label{connection}
Let $(M,g)$ be a complete and connected Riemannian manifold. Then $C_g \neq \varnothing$. 
\end{prop}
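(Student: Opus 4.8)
The plan is to construct a connection rule $\gamma$ globally by patching together a "small-scale" rule near the diagonal with a measurable "large-scale" choice of minimal geodesics away from the diagonal, and then verify the three axioms. The first step is to exploit that $(M,g)$ is complete: by the Hopf–Rinow theorem, any two points $x,y\in M$ are joined by at least one minimal geodesic, so the set-valued map sending $(x,y)$ to the collection of initial velocities $\{v\in T_xM : \|v\|=d(x,y),\ \exp_x v=y\}\subset TM$ is nonempty-valued. This map has closed graph (a consequence of continuity of $\exp$ and of the distance function), so by a measurable selection theorem (e.g. Kuratowski–Ryll-Nardzewski, applied in local charts or on the second-countable manifold $TM$) there is a measurable map $\sigma:M\times M\to TM$ with $\sigma(x,y)\in T_xM$, $\exp_x\sigma(x,y)=y$, and $t\mapsto\exp_x t\sigma(x,y)$ minimal on $[0,1]$.

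The issue is that such a measurable selection need not be $C^2$—or even continuous—near the diagonal, where axioms (ii) and (iii) must hold. To fix this, I would work near the diagonal using the injectivity radius. Let $\mathrm{inj}:M\to(0,\infty]$ be the injectivity radius function, which is continuous (or at least lower semicontinuous, which suffices). On the open neighborhood $\mathcal{U}=\{(x,y):d(x,y)<\mathrm{inj}(x)\}$ of the diagonal, the map $(x,y)\mapsto\exp_x^{-1}y$ is well defined, smooth, and automatically gives a minimal geodesic; moreover it satisfies (i), (ii), and (iii) by the standard properties of the exponential map (indeed $(d\,\exp_x^{-1}(y)|_{y=x})=\mathrm{id}_{T_xM}$). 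So on $\mathcal{U}$ I use $\gamma(x,y)=\exp_x^{-1}y$, and on the complement I use the measurable selection $\sigma$. To glue them without destroying measurability, fix once and for all the smaller neighborhood $\mathcal{V}=\{(x,y):d(x,y)<\tfrac12\mathrm{inj}(x)\}$ and set $\gamma$ equal to $\exp_x^{-1}y$ on $\mathcal{V}$ and equal to $\sigma$ off $\mathcal{V}$; since $\mathcal{V}$ is open (hence Borel) and both pieces are measurable, $\gamma$ is measurable, it is $C^2$ (indeed $C^\infty$) on the open set $\mathcal{V}$, which is a neighborhood of the diagonal, and it satisfies (i)–(iii) because all three axioms are local at the diagonal and are met by the $\exp^{-1}$ piece. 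Finally, every value $\gamma(x,y)$ is the initial velocity of a minimal geodesic: on $\mathcal{V}$ because $d(x,y)<\mathrm{inj}(x)$ forces $\exp_x^{-1}y$ to be minimal, and off $\mathcal{V}$ by the defining property of $\sigma$. Hence $\gamma\in C_g$ and $C_g\neq\varnothing$.

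The main obstacle is the interface between smoothness near the diagonal and mere measurability elsewhere: one must be careful that the measurable selection theorem applies (the graph of the minimal-geodesic-velocity multifunction is closed by compactness of the relevant sublevel sets of $\exp$ together with completeness) and that lower semicontinuity of $\mathrm{inj}$ is enough to make $\mathcal{V}$ open. No delicate estimate is needed beyond these, since the axioms (ii) and (iii) are purely infinitesimal at the diagonal and are inherited verbatim from the Riemannian exponential map.
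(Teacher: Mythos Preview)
Your proposal is correct and follows essentially the same strategy as the paper: obtain a Borel selection of initial velocities of minimal geodesics via a measurable selection theorem (the paper verifies that $\{(x,y):\Phi(x,y)\cap C\neq\varnothing\}=F(C\cap D)$ is compact for each compact $C\subset TM$, where $F(v)=(\pi v,\exp v)$ and $D$ is the closed set of minimizing vectors, and then cites Parthasarathy's selection theorem, whereas you argue via the closed graph and Kuratowski--Ryll-Nardzewski). The only difference is that the paper does not glue: since $\Phi(x,y)$ is a singleton whenever $d(x,y)<\mathrm{inj}(x)$, \emph{any} measurable selection of $\Phi$ already coincides with $\exp_x^{-1}y$ on a neighborhood of the diagonal and is therefore automatically smooth there, so your explicit patching step, while correct, is not needed.
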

\begin{proof}
Let $\pi :UM \to M$ be a unit tangent bundle. Define $t:UM\to [0,\infty]$ by
\[
t(u):=\sup \{ t\geq 0 \mid d(\pi u, \exp (tu))=t \},
\]
where $d$ is the Riemannian distance. Set
\[
D_x:=\{tu \in TM \mid u\in U_xM,\ t\in [0,t(u)]\},
\]
\[
D:=\bigsqcup _{x\in M}D_x. 
\]
Then $D$ is a closed subset of $TM$. Define $F:TM\to M\times M$ by
\[
F(u):=(\pi u, \exp(u)).
\]
Then $F$ is a continuous map because the solution of the geodesic equations depends continuously on the initial value. We further set
\[
\Phi(x,y):=\{ u\in T_xM \mid \exp _xu=y,\ \| u\| =d(x,y)\}.
\]
Then for a compact subset $C \subset TM$, we have
\begin{align*}
\Phi^{-1}(C)&:=\{(x,y)\mid \Phi(x,y) \cap C \neq \varnothing \}\\
&=F(C\cap D).
\end{align*}
Since $F$ is continuous, this is a compact subset in $M\times M$. In particular, $\Phi ^{-1}(C)$ is a Borel subset. Therefore by measurable section theorem (\cite{Par}, Theorem 5.2), there exists a map $\gamma:M\times M \to TM$ such that $\gamma (x,y)\in \Phi (x,y)$ and $\gamma^{-1}(C)$ is a Borel set for any compact subset $C\subset TM$. Then $\gamma$ is Borel measurable and this is a connection rule we want.
\end{proof}

\begin{rem}
As mentioned in \cite{Pic1}, for each connection rule $\gamma$, there exists a unique torsion-free connection $\nabla$ such that
\[
f(y)-f(x)=\langle df(x), \gamma(x,y) \rangle +\frac{1}{2}\nabla d f(x)(\gamma (x,y),\gamma (x,y))+ \mathrm{o}(d(x,y)^3)\ (y\to x)
\]
for all $f\in C^{\infty}(M)$, where $d$ is a distance compatible with the topology of $M$. Note that the correspondence is not one-to-one. %In fact, the connection induced by connection rules given in \esref{conne1} through \ref{conne3} is the Levi-Civita connection.
For two connection rules $\gamma_1$ and $\gamma_2$, they induce the same connection $\nabla$ if and only if
\begin{align}\label{equiconne}
|\gamma_1(x,y)-\gamma_2(x,y)|=\mathrm{o}(d(x,y)^3)\ (y\to x)
\end{align}
for all $x\in M$.
\end{rem}
Next we recall the definition of $\Delta$-semimartingale introduced in \cite{Pic1}, which is a pair of a c\`{a}dl\`{a}g semimartingale and directions of jumps.
\begin{dfn}\label{Pic3.1}
Let $Y=(\Delta X, X)$ be an adapted $TM\times M$-valued process. The process $Y$ is called a $\Delta$-semimartingale if it satisfies the following:
\begin{itemize}
\item[(i)] $X$ is an $M$-valued semimartingale;
\item[(ii)] $\Delta X_s\in T_{X_{s-}}M$ for all $s>0$;
\item[(iii)] $\Delta X_0\in T_{X_0}M,\ \Delta X_0=0$;
\item[(iv)] for all connection rules $\gamma$ and $T^*M$-valued c\`{a}dl\`{a}g processes $\phi$,
\begin{gather}
\sum_{0<s\leq t} \langle \phi _{s-}, \Delta X_s-\gamma(X_{s-},X_s)\rangle<\infty,\ \text{for all }t>0.\label{iv}
\end{gather}
\end{itemize}
\end{dfn}
\begin{rem}
It is sufficient that condition (iv) is satisfied for some connection rule.
\end{rem}
%\begin{rem}\label{v}
%We will consider the case where $(M,g)$ is a complete, connected Riemannian manifold in later sections. In this case we consider condition \eqref{eq:v} for $\Delta$-semimartingale.
%\end{rem}
Let $(\Delta X,X)$ be a $\Delta$-semimartingale and $\nabla$ a torsion-free connection. Then by Proposition 3.5 of \cite{Pic1}, we can define the It\^o integral along $(\Delta X,X)$ by
\[
\int _0^t\phi_{s-} \, dX_s:=\int_0^t \phi_{s-} \, \gamma dX_s+\sum_{0<s\leq t} \langle \phi_{s-}, \Delta X_s-\gamma(X_{s-},X_s) \rangle,
\]
where $\phi$ is a $T^*M$-valued \cl process above $X$ and $\gamma$ is a connection rule which induces $\nabla$. In a similar way, we can define the quadratic variation of 2-tensor along a $\Delta$-semimartingale. For the sake of later use, we introduce discontinuous martingales on manifolds.
\begin{dfn}\label{defmartingale}
Let $M$ be a $d$-dimensional manifold with a torsion-free connection $\nabla$, and $(\Delta X, X)$ an $M$-valued $\Delta$-semimartingale. We call $(\Delta X,X)$ a $\nabla$-martingale if for all $T^*M$-valued \cl processes $\phi$ above $X$, $\dis \int \phi_- \, dX$ is a local martingale.
\end{dfn}
Note that the definition of martingales with jumps depends on the direction of jumps $\Delta X$. If $(\gamma(X_-,X),X)$ is a $\nabla$-martingale, we call $X$ a $\gamma$-martingale.\\

Next we introduce the Stratonovich integral of 1-form along the $\Delta$-semimartingale. The definition below is an extension of that of \cite{PE}.
\begin{dfn}\label{Stratonovich}
Let $\nabla$ be a torsion-free connection and $(\Delta X,X)$ an $M$-valued $\Delta$-semimartingale. For $\alpha \in \Omega^1(M)$, we define
\[
\int_0^t \alpha \circ dX:=\int_0^t \alpha (X_{s-})\, dX_s+\frac{1}{2}\int_0^t (\nabla \alpha)(X_{s-})\, d[X,X]^c_s.
\]
This is called the Stratonovich integral of 1-form along $(\Delta X,X)$. We also denote the integral by $\displaystyle \int_0^t \alpha(X_-)\circ dX.$
\end{dfn}
\begin{prop}\label{falpha}
For $\alpha \in \Omega^1(M)$ and $f\in C^{\infty}(M)$, 
\begin{gather}
\int f \alpha \circ dX=\int f(X)\circ d\left(\int \alpha \circ dX\right).\label{intfa}
\end{gather}
\end{prop}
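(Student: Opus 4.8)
The plan is to unfold each side of \eqref{intfa} into an It\^o integral plus a continuous quadratic-variation correction, and thereby reduce the statement to a single equality between continuous covariations, which I then settle with It\^o's formula on $M$.

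\emph{Left-hand side.} Since $\nabla$ obeys the Leibniz rule, $\nabla(f\alpha)=df\otimes\alpha+f\,\nabla\alpha$; substituting this into \dref{Stratonovich} and using linearity of the quadratic-variation map of \pref{Pic3.6} gives
\[
\int_0^t f\alpha\circ dX=\int_0^t(f\alpha)(X_-)\,dX+\frac12\int_0^t(df\otimes\alpha)(X_-)\,d[X,X]^c+\frac12\int_0^t f(X_-)\,(\nabla\alpha)(X_-)\,d[X,X]^c .
\]
Write $I:=\int\alpha(X_-)\,dX$ and $Q:=\int(\nabla\alpha)(X_-)\,d[X,X]^c$. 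Condition (2) of \pref{Pic3.5} gives $\int(f\alpha)(X_-)\,dX=\int f(X_-)\,dI$, and condition (ii) of \pref{Pic3.6} gives $\int f(X_-)(\nabla\alpha)(X_-)\,d[X,X]^c=\int f(X_-)\,dQ$ (it suffices to know (ii) on the continuous part, as $d[X,X]^d$ only records jumps). Since $N:=\int\alpha\circ dX=I+\tfrac12 Q$ by \dref{Stratonovich}, the left-hand side becomes
\[
\int f\alpha\circ dX=\int f(X_-)\,dN+\frac12\int(df\otimes\alpha)(X_-)\,d[X,X]^c .
\]

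\emph{Reduction.} On the right-hand side, $\int f(X_-)\circ dN=\int f(X_-)\,dN+\tfrac12[f(X),N]^c$ by the definition of the Stratonovich integral of the process $f(X)$ against the real semimartingale $N$, so \eqref{intfa} is equivalent to $\int(df\otimes\alpha)(X_-)\,d[X,X]^c=[f(X),N]^c$. Now $[f(X),N]^c=[f(X),I]^c$ because $N-I=\tfrac12 Q$ is continuous of finite variation, and \rref{quad} applied to the c\`adl\`ag processes $df(X)$ and $\alpha(X)$ above $X$ gives $\bigl[\int df(X_-)\,dX,\,I\bigr]=\int(df\otimes\alpha)(X_-)\,d[X,X]$, hence $\bigl[\int df(X_-)\,dX,\,I\bigr]^c=\int(df\otimes\alpha)(X_-)\,d[X,X]^c$. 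Thus the whole identity reduces to
\[
\Bigl[\,f(X)-\int df(X_-)\,dX,\ I\,\Bigr]^c=0 .
\]

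\emph{The main obstacle.} This last equality — equivalently, that $f(X)$ and $\int df(X_-)\,dX$ have the same continuous local-martingale part — is the one genuinely substantive point. I would deduce it from It\^o's formula for $\Delta$-semimartingales (\cite{Pic1}),
\[
f(X_t)-f(X_0)=\int_0^t df(X_{s-})\,dX_s+\frac12\int_0^t(\nabla df)(X_{s-})\,d[X,X]^c_s+\sum_{0<s\le t}\bigl\{f(X_s)-f(X_{s-})-\langle df(X_{s-}),\Delta X_s\rangle\bigr\},
\]
which exhibits $f(X)-\int df(X_-)\,dX$ as the sum of a constant, a continuous finite-variation term, and a pure-jump term (the jump sum converging by condition (iv) of \dref{Pic3.1} together with the Taylor bound $f(y)-f(x)-\langle df(x),\gamma(x,y)\rangle=O(d(x,y)^2)$ near the diagonal); a process of this form has no continuous local-martingale part, so its continuous covariation with $I$ vanishes, and the chain of equalities closes. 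The manipulations in the first two steps are routine applications of \psref{Pic3.5}, \ref{Pic3.6} and \rref{quad}; the real work lies in this identification of continuous local-martingale parts. Alternatively, the same fact can be verified by localizing to a chart and comparing continuous local-martingale parts through the classical It\^o formula on $\mathbb{R}^d$.
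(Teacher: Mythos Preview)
Your proof is correct and follows the same route as the paper's: both sides are expanded into an It\^o integral plus a continuous correction, and the matching comes down to the identity $[f(X),\int\alpha(X_-)\,dX]^c=\int(df\otimes\alpha)(X_-)\,d[X,X]^c$, handled via \rref{quad}. The paper simply writes $[f(X),\int\alpha\,dX]^c=[\int df(X_-)\,dX,\int\alpha\,dX]^c$ in one line without justification; your explicit reduction to $[f(X)-\int df(X_-)\,dX,\,I]^c=0$ and appeal to It\^o's formula on $M$ makes that step more transparent than the paper does.
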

In \eqref{intfa}, the right-hand side is the Stratonovich integral of $\mathbb{R}$-valued semimartingale $f(X)$ along $\mathbb{R}$-valued semimartingale $\displaystyle \int \alpha \circ dX$; namely, for $\mathbb{R}$-valued semimartingales $Y$ and $Z$,
\[
\int Y\circ dZ=\int Y_-\, dZ+\frac{1}{2}[Y,Z]^c.
\]
\begin{proof}
We begin with the left-hand side of \eqref{intfa}:
\begin{align*}
\int _0^t (f\alpha) (X_s) \circ dX_s=&\int _0^tf\alpha (X_{s-})\ dX_s+\frac{1}{2}\int_0^t (\nabla f\alpha)(X_{s-})\ d[X,X]^c_s\\
=&\int_0^t f(X_{s-})\ d\left( \int_0^{\cdot} \alpha \, dX \right)+\frac{1}{2}\int_0^tf(X_{s-})\ d\left(\int_0^{\cdot} (\nabla \alpha )\ d[X,X]^c\right)\\
&+\frac{1}{2}\int_0^t \alpha \otimes df \ d[X,X]^c_s.
\end{align*}
On the other hand,
\begin{align*}
\int_0^t f(X_{s}&)\circ d\left(\int_0^{\cdot}\alpha \circ dX \right)\\
=&\int_0^t f(X_{s-})\ d\left( \int_0^{\cdot}\alpha \circ dX\right)+\frac{1}{2}\int_0^td \left[ f(X),\int_0^{\cdot}\alpha \ dX \right]^c_s\\
=&\int_0^t f(X_{s-})\ d\left( \int_0^{\cdot}\alpha \ dX\right)+\frac{1}{2}\int_0^tf(X_{s-})\ d\left(\int_0^{\cdot}\nabla \alpha (X_-)\ d[X,X]^c \right)\\
&+\frac{1}{2}\left[ f(X),\int_0^{\cdot}\alpha \ dX \right]^c_t.
\end{align*}
Furthermore,
\begin{align*}
\left[ f(X),\int_0^{\cdot}\alpha \ dX \right]^c_t&=\left[ \int_0^{\cdot}df(X_-)\ dX,\int_0^{\cdot}\alpha \ dX \right]^c_t\\
&=\int_0^tdf\otimes \alpha (X_-)\ d[X,X]^c.
\end{align*}
Therefore we obtain
\[
\int f \alpha \circ dX=\int f(X)\circ d\left(\int \alpha \circ dX\right),
\]
and this is precisely the assertion of the proposition.
\end{proof}

\begin{prop}
Let $(\Delta X, X)$ be a $\Delta$-semimaritngale. Then for $\alpha$, $\beta \in \Omega^1(M)$,
\[
\left[ \int \alpha (X)\circ dX, \int \beta (X)\circ dX \right]=\int \alpha \otimes \beta (X_-)\, d[X,X].
\]
\end{prop}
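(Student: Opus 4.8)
The plan is to strip off the finite-variation correction terms in the definition of the Stratonovich integral and then invoke \rref{quad}. First I would use \dref{Stratonovich} to write
\[
\int_0^t \alpha(X_-)\circ dX=\int_0^t \alpha(X_{s-})\, dX_s+\frac12\int_0^t \nabla\alpha(X_{s-})\, d[X,X]^c_s=:Y^\alpha_t+A^\alpha_t,
\]
and the analogous decomposition $\int \beta(X_-)\circ dX=Y^\beta+A^\beta$. Here $A^\alpha$ and $A^\beta$ are continuous processes of locally finite variation: $\nabla\alpha(X_-)$ and $\nabla\beta(X_-)$ are locally bounded predictable $T^*M\otimes T^*M$-valued processes above $X$, and by \pref{Pic3.6} the continuous part of the quadratic variation is a continuous locally-finite-variation process.

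Next I would expand the bracket by bilinearity,
\[
\Bigl[\int\alpha(X_-)\circ dX,\int\beta(X_-)\circ dX\Bigr]=[Y^\alpha,Y^\beta]+[Y^\alpha,A^\beta]+[A^\alpha,Y^\beta]+[A^\alpha,A^\beta],
\]
and observe that a continuous finite-variation process has vanishing covariation with any semimartingale (its jumps are zero, and $[A,Z]_t=\sum_{0<s\le t}\Delta A_s\Delta Z_s$ for $A$ of finite variation). Hence the last three terms vanish and
\[
\Bigl[\int\alpha(X_-)\circ dX,\int\beta(X_-)\circ dX\Bigr]=\Bigl[\int\alpha(X_-)\, dX,\int\beta(X_-)\, dX\Bigr].
\]
Finally, since $\alpha(X)$ and $\beta(X)$ are $T^*M$-valued c\`adl\`ag processes above $X$, their left-limit processes $\alpha(X_-)$ and $\beta(X_-)$ are exactly the integrands appearing in the It\^o integrals of \pref{Pic3.5}, so \rref{quad} applies and gives
\[
\Bigl[\int\alpha(X_-)\, dX,\int\beta(X_-)\, dX\Bigr]=\int\alpha(X_-)\otimes\beta(X_-)\, d[X,X],
\]
which is the right-hand side of the assertion. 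Chaining the two displays completes the proof.

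There is no substantial obstacle here: the only points needing care are verifying that $\alpha(X)$, $\beta(X)$ genuinely are c\`adl\`ag processes above $X$ so that \rref{quad} is applicable to the correct integrands, and that the correction terms are continuous finite-variation; both are immediate from \dref{Stratonovich} and \pref{Pic3.6}. If desired one could instead run the argument directly from \pref{Pic3.6}(i) together with \pref{falpha}-style localization, but the route through \rref{quad} is cleaner and avoids reproving the covariation formula.
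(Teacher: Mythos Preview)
Your proof is correct and follows essentially the same approach as the paper: decompose each Stratonovich integral via \dref{Stratonovich}, drop the continuous finite-variation correction terms from the bracket, and apply \rref{quad}. The paper's version is simply terser, omitting the explicit bilinear expansion and the verification that the correction terms have zero covariation, but the route is identical.
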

\begin{proof}
By \dref{Stratonovich} and the property of the Ito integral, we have
\begin{align*}
\left[ \int \alpha (X)\circ dX, \int \beta (X)\circ dX \right]=&\left[ \int \alpha(X_-)\, dX+\frac{1}{2}\nabla \alpha (X_-)\, d[X,X]^c,\right. \\ &\left. \int \beta (X_-)\, dX+\frac{1}{2}\int \nabla \beta (X_-)\, d[X,X]^c \right]\\
=&\left[ \int \alpha (X_-)\, dX,\int \beta(X_-)\, dX \right]\\
=&\int \alpha \otimes \beta (X_-)\, d[X,X].
\end{align*}
This is our claim.
\end{proof}
Since the stochastic integral along a $\Delta$-semimartingale has a \cl modification, we can consider the stochastic integral on a random interval.
\begin{dfn}
Let $S$,$T$ be stopping times with $S<T$ and $(\Delta X,X)$ a $\Delta$-semimartingale. For a $T^*M$-valued c\`{a}dl\`{a}g process $\phi$ above $X$, we define
\begin{align*}
\int_{(S,T]}\phi_{s-}\, dX_s&:=\int_0^T\phi_{s-}\, dX_s-\int_0^S\phi_{s-}\, dX_s,\\
\int_{\{ T\}}\phi_{s-}\, dX_s&:=\langle \phi _{T-}, \Delta X_T\rangle,\\
\int_{(S,T)}\phi_{s-}\, dX_s&:=\int_{(S,T]}\phi_{s-}\, dX_s-\int_{\{ T\}}\phi_{s-}\, dX_s.\\
\end{align*}
We define the quadratic variation and the Stratonovich integral on $(S,T]$, $\{ T\}$, $(S,T)$ in the same way.
\end{dfn}
\begin{prop}\label{coordinate}
Let $(\Delta X, X)$ be a $\Delta$-semimartingale and $(U;x^1,\dots ,x^d)$ a local coordinate neighborhood. Let $\alpha \in \Omega^1(M)$ and $b$ a 2-tensor field with
\[
\alpha=\alpha _idx^i,\ b=b_{ij}dx^i\otimes dx^j\ \text{on}\ U.
\]
Let $S$, $T$ be stopping times such that $S<T$ and $X_s\in U$ for $s\in (S,T)$. Then
\begin{align}
\int_{(S,T)}\alpha (X_{s}) \circ dX_s=&\int_{(S,T)}\alpha_i(X_{s}) \circ dX^i_s+\sum_{S<s<T}\langle \alpha(X_{s-}),\Delta X_s-\Delta X^i_s \frac{\partial}{\partial x^i}\rangle, \label{eq:1}\\
\int_{(S,T)}b(X_{s-})\, d[X,X]_s=&\int_{(S,T)}b_{ij}(X_{s-})\, d[X^i,X^j]_s\notag \\ 
+\sum_{S<s<T}\{b(X_{s-})&(\Delta X_s,\Delta X_s)-b(X_{s-})(\Delta X^i_s \frac{\partial}{\partial x^i},\Delta X^i_s \frac{\partial}{\partial x^i})\}, \label{eq:2}
\end{align}
where $X^i=x^i(X)$ on $U$.
\end{prop}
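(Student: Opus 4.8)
The plan is to reduce the coordinate formulas \eqref{eq:1} and \eqref{eq:2} to the defining properties of the Itô integral and the quadratic variation given in \psref{Pic3.5} and \ref{Pic3.6}, combined with \dref{Stratonovich}. The key idea is that on the random interval $(S,T)$ the process $X$ stays in the coordinate chart $U$, so we may work with the $\mathbb{R}^d$-valued process $(X^1,\dots,X^d)$ and the local expressions $\alpha=\alpha_i\,dx^i$, $b=b_{ij}\,dx^i\otimes dx^j$. The difference between the intrinsic integrals and the coordinate integrals is entirely a sum over jump times, coming from the fact that the jump $\Delta X_s\in T_{X_{s-}}M$ need not equal the ``coordinate jump'' $\Delta X^i_s\,\partial/\partial x^i$ (the latter being the naive vector one would read off from the jumps of the Euclidean coordinates).

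First I would treat the Itô integral \eqref{eq:2}... more precisely, I would first establish \eqref{eq:2} for the quadratic variation, since it is slightly more transparent. Pick a connection rule $\gamma$; by \pref{Pic3.6}(i), $\int_{(S,T)} b(X_-)\,d[X,X]$ is the limit in probability of Riemann-type sums $\sum_k b_{T^n_k}(\gamma(X_{T^n_k},X_{T^n_{k+1}}),\gamma(X_{T^n_k},X_{T^n_{k+1}}))$ plus the jump-correction series $\sum_{S<s<T} b_{s-}(\Delta X_s-\gamma(X_{s-},X_s),\Delta X_s-\gamma(X_{s-},X_s))$. Restricting the partitions to refine $\{S,T\}$ and using that $X$ is chart-valued on $(S,T)$, the first sum converges to $\int_{(S,T)} b_{ij}\,d[X^i,X^j]$ plus the analogous $\gamma$-correction sum expressed in coordinates, because in the chart $\gamma(x,y)^i = (y^i - x^i) + O(|y-x|^2)$ by property (iii) of a connection rule, and the second-order terms are absorbed in the quadratic-variation limit. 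Rearranging, the $\gamma$-dependent terms cancel and one is left with $\int_{(S,T)} b_{ij}\,d[X^i,X^j]$ plus $\sum_{S<s<T}\{b(X_{s-})(\Delta X_s,\Delta X_s) - b(X_{s-})(\Delta X^i_s\,\partial/\partial x^i,\Delta X^i_s\,\partial/\partial x^i)\}$, which is \eqref{eq:2}. I would then handle the Itô integral \eqref{eq:1} of 1-forms the same way via \pref{Pic3.5}(1) (or \pref{Pic3.2}), and finally derive the Stratonovich formula \eqref{eq:1} by combining the Itô version with \dref{Stratonovich}: the continuous correction term $\frac12\int(\nabla\alpha)(X_-)\,d[X,X]^c$ is itself a quadratic variation against a 2-tensor, so it transforms by (the continuous part of) \eqref{eq:2}, and the Christoffel symbols arising from $\nabla(\alpha_i\,dx^i) = (\partial_j\alpha_i - \Gamma^k_{ji}\alpha_k)\,dx^j\otimes dx^i$ reproduce exactly the correction that converts $\int \alpha_i(X_-)\,dX^i$ into $\int \alpha_i(X_-)\circ dX^i$; since $\int\,d[X,X]^c$ does not depend on $\Delta X$, no extra jump terms appear beyond those already in the Itô formula.

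The main obstacle I anticipate is the bookkeeping in the jump-correction sums: one must check that the series $\sum_{S<s<T}\langle\alpha(X_{s-}),\Delta X_s - \Delta X^i_s\,\partial/\partial x^i\rangle$ (and its 2-tensor analogue) converges absolutely, so that it can be split off and the $\gamma$-terms recombined termwise. This follows from condition (iv) in \dref{Pic3.1} applied with the c\`adl\`ag process $\phi_s = \alpha(X_s)$ and with the connection rule $\gamma$, together with the estimate $\Delta X^i_s\,\partial/\partial x^i - \gamma(X_{s-},X_s) = O(|\Delta X^i_s|^2)$ in the chart, whose sum over jumps is finite because $\sum_s |\Delta X^i_s|^2 < \infty$ for a semimartingale; but making this rigorous requires a small localization argument (stopping $X$ to stay in a compact subset of $U$ between $S$ and $T$) so that all the $O(\cdot)$ constants are uniform. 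Once that absolute convergence is in hand, the rest is a routine matching of limits.
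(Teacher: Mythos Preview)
Your approach is correct but differs from the paper's. The paper does not go back to the Riemann-sum level of \psref{Pic3.2} and \ref{Pic3.6} at all. Instead, for \eqref{eq:1} it uses the factoring property \pref{Pic3.5}(2) to write
\[
\int_{(S,T)}\alpha(X_-)\circ dX=\int_{(S,T)}\alpha_i(X_-)\,d\!\left(\int dx^i\,dX\right)+\tfrac12\int_{(S,T)}\bigl(\partial_j\alpha_i-\Gamma^k_{ij}\alpha_k\bigr)\,d\!\left(\int dx^i\otimes dx^j\,d[X,X]^c\right),
\]
shows by the same expansion that the right-hand side equals $\int_{(S,T)}\alpha_i(X_-)\circ d\bigl(\int dx^i\circ dX\bigr)$, and then invokes the It\^o-type identity $\int dx^i\circ dX = X^i-X^i_0-\sum_{0<s\le\cdot}(\Delta X^i_s-\langle dx^i,\Delta X_s\rangle)$ to pass to the coordinate semimartingales; \eqref{eq:2} is declared to follow ``in the same way''. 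Your route via Taylor-expanding a connection rule $\gamma$ in the chart and passing to the limit in the Riemann sums is more hands-on and is essentially a direct proof of that It\^o-type identity together with its 2-tensor analogue, rather than quoting it. The paper's argument is shorter; yours is more self-contained and makes the role of the jump-correction sums explicit.

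One small caution about your bookkeeping in the last step: the Christoffel cancellation is not produced by $\nabla\alpha$ alone. When you carry out the Riemann-sum computation for the intrinsic It\^o integral you will pick up a term $+\tfrac12\int\alpha_i\Gamma^i_{jk}\,d[X^j,X^k]^c$ coming from the second-order expansion $\gamma^i(x,y)=y^i-x^i+\tfrac12\Gamma^i_{jk}(x)(y^j-x^j)(y^k-x^k)+o(|y-x|^2)$; it is this term that cancels against the $-\tfrac12\Gamma^k_{ji}\alpha_k$ in $\nabla\alpha$, while the remaining $\tfrac12\partial_j\alpha_i\,d[X^j,X^i]^c$ is what converts $\int\alpha_i\,dX^i$ into $\int\alpha_i\circ dX^i$. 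As long as you keep track of that extra $\Gamma$-contribution in the It\^o step, your plan goes through without difficulty.
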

\begin{proof}
We begin with the left-hand side of \eqref{eq:1}:
\begin{align*}
\int_{(S,T)}\alpha (X)\circ dX=&\int_{(S,T)}\alpha_i(X_-)\ d\left(\int dx^idX \right)\\
&+\frac{1}{2}\int_{(S,T)}\left( \frac{\partial \alpha_i}{\partial x^j}-\Gamma ^k_{ij}\alpha _k \right)(X_-)d\left( \int dx^i\otimes dx^j \ d[X,X]^c\right).
\end{align*}
On the other hand,
\begin{align*}
&\int_{(S,T)}\alpha_i(X)\circ d\left( \int dx^i\circ dX \right)\\
=&\int_{(S,T)}\alpha_i(X)\circ d\left(\int dx^i dX \right)+\frac{1}{2}\int _{(S,T)}\alpha_i(X)\circ d\left(\int \nabla dx^i d[X,X]^c \right)\\
=&\int_{(S,T)}\alpha_i(X_-)\ d\left(\int dx^i\ dX \right)+\frac{1}{2}\int_{(S,T)}\left(\frac{\partial \alpha_k}{\partial x^j}-\Gamma ^i_{jk}\alpha_i \right)(X_-)\ dx^j\otimes dx^k\ d[X,X]^c.
\end{align*}
Therefore
\[
\int_{(S,T)}\alpha (X)\circ dX=\int_{(S,T)}\alpha_i(X)\circ d\left( \int dx^i\circ dX\right).
\]
Set $\displaystyle \Delta X_s=a^i_s\left( \frac{\partial}{\partial x^i} \right)_{X_{s-}}$ Then
\begin{align*}
\int_{(S,T)}\alpha_i(X)& \circ d\left( \int dx^i\circ dX \right)\\
&=\int_{(S,T)}\alpha_i(X)\circ d\left(X^i-X^i_0-\sum_{0<s\leq \cdot}(X^i_s-X^i_{s-}-a^i_s) \right)\\
&=\int_{(S,T)}\alpha_i(X)\circ dX^i+\sum_{S<s<T}\langle \alpha (X_{s-}),\Delta X_s-\Delta X^i_s\frac{\partial}{\partial x^i}\rangle.
\end{align*}
\eqref{eq:2} follows in the same way.
\end{proof}
%%%%%%%%%%%%%%%%%%%%%%%%%%%%%%%%%%%%%%%%%%%
%\section{Second-order stochastic differential geometry with jumps}\label{sectionCohen}
Next, we recall the theory of second-order stochastic differential geometry with jumps. In \cites{Co1, Co2}, Cohen formulated the stochastic integral of order 2 along a c\`{a}dl\`{a}g semimartingale valued in a manifold and the stochastic defferential equation. In this section we summarize results about them. See \cites{Co1, Co2, AVMU, Vecc} for details. Several works done in \cite{Co1, Co2} were also summarized in \cite{Maillard06}.
%%%%%%%%
A linear map $L:C^2(M)\to \mathbb{R}$ is called a second-order differential operator without constant at $x\in M$ if for a local coordinate $(x^i)$ including $x$, there exist $a^{ij}\in \mathbb{R}$, $b^k\in \mathbb{R}$ ($i,j,k=1,\dots,d$) such that $L$ is denoted by
\[
Lf(x)=\sum_{i,j=1}^da^{ij}\frac{\partial^2f}{\partial x^i \partial x^j}(x)+\sum_{k=1}^db^k\frac{\partial f}{\partial x^k}(x),\ f\in C^2(M).
\]
This definition does not depend on local coordinates.
Denote the vector space of all second-order differential operators at $x\in M$ by $\mathbb{T}_xM$. The space
\[
\mathbb{T}M=\bigsqcup_{x\in M}\mathbb{T}_xM
\]
is called the second-order tangent bundle on $M$. Let $\mathbb{T}^*_xM$ be the dual space of $\mathbb{T}_xM$ for each $x\in M$. The space
\[
\mathbb{T}^*M=\bigsqcup_{x\in M}\mathbb{T}^*_xM
\]
is called the second-order cotangent bundle on $M$.
%\begin{ex}
%Suppose $f:M\to \mathbb{R}$ is $C^2$ at $x\in M$. Define $d^2f(x):\mathbb{T}^*M\to \mathbb{R}$ by
%\[
%d^2f(x)(L)=Lf(x).
%\]
%Then $d^2f(x) \in \mathbb{T}_x^*M$.
%\end{ex}
\begin{dfn}
Let $f:M\to \mathbb{R}$ be a Borel measurable function. $f$ is called a form of order 2 specified in $x$ if $f$ is twice differentiable at $x$ and $f(x)=0$. We call $x$ the base point of $f$ and denote it by $\pi (f)$. Define
\begin{align*}
\overset{\triangle}{\mathbb{T}^*}_xM&:=\{f:M\to \mathbb{R}\mid f\text{ is a form of order 2 specified in }x \},\\
\overset{\triangle}{\mathbb{T}^*}M&:=\bigsqcup_{x\in M} \overset{\triangle}{\mathbb{T}^*}_xM,
\end{align*}
and $\mathcal{G}_x:\overset{\triangle}{\mathbb{T}^*}_xM\to \mathbb{T}^*_xM$ by
\[
\mathcal{G}_xf(L):=Lf(x),\ f\in \overset{\triangle}{\mathbb{T}^*}_xM,\ L \in \mathbb{T}M.
\]
\end{dfn}
%\begin{dfn}
%$A\subset \overset{\triangle}{\mathbb{T}^*}M$ is said to be bounded if $\mathcal{G}(A)$ is relatively compact and convergences expressing that $\alpha$ is twice differentiable at the point $\pi (\alpha)$ occur uniformly for $\alpha \in A$.
%\end{dfn}

%In this section, we will use the notion of $\overset{\triangle}{\mathbb{T}^*}M$-valued predictable locally bounded processes. We refer the reader to \cite{Co1} for the definition, but throughout this paper, we only use processes in the form of $f(z)-f(X_{t-})$ or \eqref{process2} below.

%such as \eqref{process1} and \eqref{process2}.
%\begin{ex}
%For an $M$-valued semimartingale $X$ and $f\in C^2(M)$, put
%\begin{gather}
%\theta_t(x)=f(x)-f(X_{t-}).\label{process1}
%\end{gather}
%Then $\theta$ is a $\overset{\triangle}{\mathbb{T}^*}M$-valued locally bounded predictable process.
%\end{ex}
By Theorem 1 of \cite{Co1}, for an $M$-valued semimartingale $X$ and a $\overset{\triangle}{\mathbb{T}^*}M$-valued predictable locally bounded process $\Theta$ above $X$, we can define the stochastic integral $\dis \int \Theta \, \overset{\triangle}{d}X$. Note that the stochastic integral defined in Theorem 1 of \cite{Co1} can recover the It\^o integral with respect to connection rules.
%\begin{thm}[\cite{Co2}, Proposition 7]\label{CoPic}
Let $\gamma$ be a connection rule on $M$, $X$ an $M$-valued semimartingale and $\phi$ a $T^*M$-valued c\`{a}dl\`{a}g process. We set
\begin{gather}
\left( \overset{\triangle}{\gamma}_{X_{s-}}\phi_{s-}\right)(y)=\langle \phi_{s-},\gamma(X_{s-},y)\rangle,\ y\in M.\label{process2}
\end{gather}
Then $\overset{\triangle}{\gamma}_{X_{s-}}\phi_{s-}\in \overset{\triangle}{\mathbb{T}^*}_{X_{s-}}M$ and
\[
\int \overset{\triangle}{\gamma}_{X_{s-}}\phi_{s-}\, \overset{\triangle}{d}X_s=\int \phi_{s-}\, \gamma dX_s.
\]
%\end{thm}
Next we recall the theory of SDE's on manifolds with jumps formulated in \cites{Co1, Co2}.
%%%%%%%%%%
\begin{dfn}\label{constraint}
Let $M$ and $N$ be manifolds. Suppose that $C$ is a closed submanifold of $M\times N$ such that the projection $p_1$ from $C$ to $M$ is onto and a submersion. A measurable map $\varphi:C\times M \to N$ is called a constraint coefficient from $C\times M$ to $N$ if
\begin{itemize}
\item[(i)]for each $(x,y)\in C$, $\varphi (x,y,x)=y$,
\item[(ii)]$\varphi$ is $C^3$ in a neighborhood of $\{ (z,\ p_1(z))| z\in C\}$,
\item[(iii)]for all $x\in M$ and $z\in C$, $(x,\ \varphi (z, x))\in C$.
\end{itemize}
\end{dfn}

\begin{dfn}\label{solution}
Let $M$ and $N$ be manifolds, $C$ a closed submanifold of $M \times N$, and $\phi:C\times M \to N$ a constraint coefficient from $C\times M$ to $N$. Fix an $M$-valued semimartingale $X$ and an $N$-valued $\mathcal{F}_0$-measurable random variable $y_0$ with $(X_0,y_0)\in C$. A pair of a positive predictable stopping time $\eta$ and an $N$-valued semimartingale $Y$ on $[0,\eta)$ is called a solution of the SDE
\begin{align}
\left\{ \begin{array}{ll}
\overset{\triangle}{d}Y=\phi(Y,\overset{\triangle}{d}X),\\
Y_0=y_0,
  \end{array} \right.\label{SDE}
\end{align}
if $Y_0=y_0$, $(X,Y)\in C$ and for all $\overset{\triangle}{\mathbb{T}^*}N$-valued locally bounded predictable processes $\Theta$ with $\Theta_t\in \overset{\triangle}{\mathbb{T}^*}_{Y_t}N$ on $[0,\eta)$,
\[
\int \Theta \overset{\triangle}{d}Y=\int \phi^*\Theta \overset{\triangle}{d}X,
\]
where
\[
\phi^*\Theta_t(z):=\Theta_t (\phi (X_{t-}, Y_{t-},z)).
\]
\end{dfn}
Theorem 2 of \cite{Co1} and Theorem 1 of \cite{Co3} guarantee that equation \eqref{SDE} admits a unique solution.
\begin{rem}\label{solution2}
Let $\iota \colon M \to \mathbb{R}^d$ be an embedding. Then by Remark 7 and  Proposition 4 of \cite{Co1}, $(Y, \eta)$ is a solution of the SDE \eqref{SDE} if and only if for any $f\in C^{\infty}(N)$ and $t<\eta$,
\begin{align*}
f(Y_t)-f(Y_0) &= \int_0^t \frac{\partial f \circ \Phi_s}{\partial z^i} (X_{s-})\, dX^i_s+\frac{1}{2}\int_0^t\frac{\partial^2 f\circ \Phi_s}{\partial z^i \partial z^j}(X_{s-})\, d[X^i,X^j]^c_s\\
&\h +\sum_{0<s\leq t}\left\{f(\Phi_s(X_s))-f(Y_{s-})-\frac{\partial f \circ \Phi_s}{\partial z^i} (X_{s-})\Delta X^i_s \right\},
\end{align*}
where $\Phi_t \colon \mathbb{R}^d \to N$ is an extension of $\phi (X_{t-},Y_{t-},\cdot)$ to a function on $\mathbb{R}^d$ which is $C^2$ at $z=X_{t-}$, $(z^1,\dots,z^d)$ is the standard coordinate on $\mathbb{R}^d$ and
\[
\iota (X_t)=(X^1_t,\dots,X_t^d).
\]
\end{rem}
\begin{ex}\label{exmarcus}
Let $A_i$ ($i=1,\dots,r$) be a complete vector field on $M$. We suppose that any $\mathbb{R}$-linear combination of $\{A_i\}_{i=1,\dots,r}$ is also complete. Let $h \colon M \times \mathbb{R}^r\to \mathbb{R}^r$ be a function which is $C^{\infty}$ on the neighborhood of $M \times \{0 \}$ and
\[
h(x,0)=0
\]
for all $x\in M$. Define $\phi \colon \mathbb{R}^r \times M \times \mathbb{R}^r \to M$ by
\[
\phi (z, x, w):= \mathrm{Exp}_x \left( \sum_{k=1}^r h^k(x, w-z) A_k \right),
\]
where $h(x,z)=(h^1(x,z),\dots,h^r(x,z))$. Then $\phi$ is a constraint coefficient.
%For $Y$, we define $\Phi (Y)_t \colon \mathbb{R}^r \to M$ by
%\[
%\Phi (X)_t(z):= \text{Exp} \, 
%\]
%Then
Let $W$ be a $d$-dimensional semimartingale with $W_0=0$. Then the SDE
\begin{gather}
\overset{\triangle}{d}X=\phi(X,\overset{\triangle}{d}W) \label{SDE2}
\end{gather}
admits a unique solution $(X,\eta)$. By \rref{solution2}, this means that for all $f\in C^{\infty}(M)$, $X$ satisfies that
\begin{align*}
f(X_t)-f(X_0)&=\int_0^t A_kf(X_{s-})\frac{\partial h_s^k}{\partial w^i}(W_{s-}) \, dW^i_s\\
&\h +\frac{1}{2}\int_0^t \left\{ A_kA_lf(X_{s-})\frac{\partial h_s^k}{\partial w^i}(W_{s-})\frac{\partial h_s^l}{\partial w^j}(W_{s-})\right. \\
&\h \left. + A_kf(X_{s-}) \frac{\partial^2 h^k_s}{\partial w^i \partial w^j}(W_{s-}) \right\} \, d[W^i,W^j]^c_s\\
&\h +\sum_{0<s\leq t}\left\{ f(\mathrm{Exp}_{X_{s-}}(h_s^k(W_s)A_k))-f(X_{s-})-A_kf(X_{s-})\frac{\partial h_s^k}{\partial w^i}(W_{s-}) \Delta W^i_s \right\},
\end{align*}
where
\[
h^k_t(w)=h^k(X_{t-}, w-W_{t-}).
\]
\end{ex}
\lref{intvec} below states that we can describe the Stratonovich integral and the quadratic variation along the solution of \eqref{SDE2} by the integral along the driving semimartingale on a Euclidean space.
\begin{lem}\label{intvec}
Set $\Delta X_t:= A_k(X_{t-})h^k(X_{t-},\Delta W_t)$ under the conditions stated in \eref{exmarcus}. Suppose $(\Delta X,X)$ is a $\Delta$-semimartingale and $h$ satisfies
\[
d_0 h(x,\cdot)=\mathrm{id} \colon \mathbb{R}^r \to \mathbb{R}^r
\]
for all $x\in M$, where the left hand side is the derivative of the map $h(x,\cdot)\colon \mathbb{R}^r \to \mathbb{R}^r$ at the origin. Then for $\alpha \in \Omega^1(M)$ and $\beta \in \Gamma(T^*M\otimes T^*M)$,
\begin{align*}
\int \alpha (X)\circ dX&=\int \langle \alpha,A_k\rangle (X)\circ dW^k\\
&\h + \frac{1}{2}\int \frac{\partial^2 h^k_s}{\partial w^i \partial w^j}(W_{s-})\langle \alpha, A_l\rangle \, d[W^k,W^l]^c\\
&\h + \sum_{0<s\leq t}\langle \alpha (X_{s-}), R^k_sA_k(X_{s-}) \rangle, \\
\int \beta (X)\, d[X,X]&=\int \beta (X_-)(A_k(X_-),A_l(X_-))\, d[W^k,W^l]\\
&\h +\sum_{0<s\leq \cdot} \Delta W^k_sR^l_s df \otimes dg (A_k,A_l)(X_{s-}) \nonumber \\
&\h +\sum_{0<s\leq \cdot} R^k_s \Delta W^l_s df \otimes dg (A_k,A_l)(X_{s-}) \nonumber \\
&\h +\sum_{0<s\leq \cdot} R^k_sR^l_s df \otimes dg (A_k,A_l)(X_{s-}),
\end{align*}
where we set
\[
R^k_t:=h^k(X_{t-},\Delta W_t) - \Delta W^i_t.
\]
\end{lem}
\begin{proof}
Without losing generality, we can suppose that $M$ is isometrically embedded in a higher dimensional Euclidean space by $\iota \colon M \to \mathbb{R}^d$. Note that for any $\alpha \in \Omega^1(M)$ and $\beta \in \Gamma (T^*M\otimes T^*M)$ we can write
\begin{align}
\alpha &= \alpha_i d\iota^i,\\
\beta &= \beta_{ij} d\iota^i \otimes d\iota^j,
\end{align}
where $\alpha_i = \langle \alpha, \nabla \iota^i \rangle$, $\beta_{ij}=\beta (\nabla \iota^i, \nabla \iota^j)$.
To begin with, we take $f\in C^{\infty}(M)$ and $\alpha = df$. Then by It\^o's formula,
\[
f(X_t)-f(X_0)=\int_0^t df (X) \circ dX + \sum_{0<s\leq t}\{ f(X_s)-f(X_{s-})-\langle df(X_{s-}) , \Delta X_s\rangle \}
\]
Note that
\[
A_k(X_{s-})\frac{\partial h^k_s}{ \partial w^i}(W_{s-})=A_i(X_{s-}).
\]
by the assumption for $h$. Then we have
\begin{align*}
\int_0^t df (X) \circ dX &= f(X_t)-f(X_0)-\sum_{0<s\leq t}\{ f(X_s)-f(X_{s-})-\langle df(X_{s-}) , \Delta X_s\rangle \} \\
&= \int_0^t A_kf(X_{s-})\frac{\partial h^k_s}{ \partial w^i}(W_{s-}) \, dW^i_s\\
&\h +\frac{1}{2}\int_0^t \left\{ A_kA_lf(X_{s-})\frac{\partial h_s^k}{\partial w^i}(W_{s-})\frac{\partial h_s^l}{\partial w^j}(W_{s-})\right. \\
&\h \left. + A_kf(X_{s-}) \frac{\partial^2 h^k_s}{\partial w^i \partial w^j}(W_{s-}) \right\} \, d[W^i,W^j]^c_s\\
&\h +\sum_{0<s\leq t}\langle df(X_{s-}), \Delta X_s - A_k (X_{s-}) \frac{\partial h^k_t}{ \partial w^k}(W_{s-}) \Delta W^i_s \rangle \\
&= \int_0^t \frac{\partial h^k_s}{ \partial w^i}(W_{s-}) \langle df, A_k \rangle (X_{s-}) \, dW^k_s\\
&\h +\frac{1}{2}\int_0^t \left\{ \frac{\partial h_s^k}{\partial w^i}(W_{s-})\frac{\partial h_s^l}{\partial w^j}(W_{s-}) A_kA_lf(X_{s-}) \right. \\
&\h \left. + \frac{\partial^2 h^k_s}{\partial w^i \partial w^j}(W_{s-}) \langle df, A_k \rangle (X_{s-})  \right\} \, d[W^i,W^j]^c_s\\
&\h +\sum_{0<s\leq t}\left\langle df(X_{s-}), \left(h^k(X_{t-},\Delta W_t) -  \frac{\partial h^k_t}{ \partial w^i}(W_{t-})\Delta W^i_t\right) A_k (X_{s-}) \right\rangle \\
&= \int_0^t \langle df, A_i \rangle (X_{s-}) \, dW^i_s +\frac{1}{2}\int_0^t A_iA_jf(X_{s-})\, d[W^i,W^j]^c_s \\
&\h + \frac{1}{2} \int_0^t \frac{\partial^2 h^k_s}{\partial w^i \partial w^j}(W_{s-}) \langle df, A_k \rangle (X_{s-}) \, d[W^i,W^j]^c_s\\
&\h +\sum_{0<s\leq t}\left\langle df(X_{s-}), R^k_s A_k (X_{s-}) \right\rangle.
\end{align*}
Moreover, by substituting $f$ to $A_if$ in the above equality, we have
\begin{align*}
\int_0^t \langle df, A_i\rangle (X) \circ dW^i_s&=\int_0^t \langle df, A_i\rangle (X_{s-})\, dW^i_s+\frac{1}{2}[A_if(X), W^j]^c_s\\
&=\int_0^t \langle df, A_i\rangle (X_{s-})\, dW^i_s+\frac{1}{2}\int_0^t A_iA_jf(X_{s-})\, d[W^i,W^j]^c_s.
\end{align*}
Thus we have
\begin{align}\label{intvect2}
\int_0^t df (X) \circ dX &=\int_0^t \langle df, A_k\rangle (X) \circ dW^k_s \nonumber \\
&\h + \frac{1}{2} \int_0^t \frac{\partial^2 h^k_s}{\partial w^i \partial w^j}(W_{s-}) \langle df, A_k \rangle (X_{s-}) \, d[W^i,W^j]^c_s \nonumber \\
&\h +\sum_{0<s\leq t}\left\langle df(X_{s-}), R^k_sA_k (X_{s-}) \right\rangle.
\end{align}
Next, we take another $g\in C^{\infty}(M)$. Then by \eqref{intvect2},
\begin{align}
\int df \otimes dg (X)\, d[X,X]&= \left[ \int df(X)  \circ dX, \int dg(X) \circ dX  \right] \nonumber \\
&= \int df \otimes dg (A_k,A_l)(X_-) \, d[W^k, W^l] \nonumber \\
&\h +\sum_{0<s\leq \cdot} \Delta W^k_sR^l_s df \otimes dg (A_k,A_l)(X_{s-}) \nonumber \\
&\h +\sum_{0<s\leq \cdot} R^k_s \Delta W^l_s df \otimes dg (A_k,A_l)(X_{s-}) \nonumber \\
&\h +\sum_{0<s\leq \cdot} R^k_sR^l_s df \otimes dg (A_k,A_l)(X_{s-}).
\end{align}

\end{proof}

%In particular, if $N$ is a submanifold of $\mathbb{R}^d$ and $\iota :N\to \mathbb{R}^d$ is an embedding, then we can show that
%\begin{align*}
%f(Y_t)-f(Y_0)=&\int \frac{\partial (f\circ \phi (X_{s-},Y_{s-},\cdot))}{\partial x^i}(X_{s-})dX^i_s\\
%&+\frac{1}{2}\int \frac{\partial^2 f\circ \phi (X_{s-},Y_{s-},\cdot)}{\partial x^i \partial x^j}(X_{s-})d[X^i,X^j]^c_s\\
%&+\sum_{0<s\leq t}\{f(Y_s)-f(X_{s-})-\frac{\partial f\circ \phi (X_{s-},Y_{s-},\cdot)}{\partial x^i}(X_{s-})\Delta X^i_s \}
%\end{align*}
%for $f\in C^2(N)$ by putting $\alpha_t(y)=f(y)-f(Y_{t-})$, $y\in N$.
%\begin{thm}[\cite{Co1}, Theorem 2, \cite{Co3}, Theorem 1]\label{SDEsol}
%Let $X$ be an $M$-valued semimartingale and $\phi$ a contraint coefficient from $C\times M$ to $N$. Then the SDE
%\begin{align*}
%\left\{ \begin{array}{ll}
%\overset{\triangle}{d}Y=\phi(Y,\overset{\triangle}{d}X),\\
%Y_0=y_0,
%\end{array} \right.
%\end{align*}
%admits a unique solution $(Y,\eta)$.
%\end{thm}

%%%%%%%%%%%%%%%%%%%%%%%%%%

%%%%%%%%%%%%%%%%%%%%%%%%%%%%
\section{Proofs of \tsref{main} and \ref{main2}}\label{sectionmain}
Let $(M,g)$ be a complete and connected Riemannian manifold and $\mathcal{O}(M)$ an orthonormal frame bundle on $M$. We use the notation defined in the previous section. In this section, we prove \tsref{main} and \ref{main2}. Throughout the proof, we will frequently use facts about orthonormal frame bundles and a bundle metric on $\mathcal{O}(M)$. We denote the bundle metric and the associated Levi-Civita connection by $\tilde g$ and $\tilde \nabla$, respectively. See \dref{defmetric} for the detail of them. We summarize the basic setting and some facts in \sref{bundlemetric}.
\subsection{Proofs of \tref{main} (1)--(3)}
\begin{proof}[Proof of \tref{main} (2)]
By the definition of the stochastic development given just above \tref{main}, it holds that for all $F\in C^{\infty}(\mathcal{O}(M))$, 
\begin{align*}
F(V_t)-F(V_0)=&\int_0^t L_kF(V_s)\circ dW^k_s\\
&+\sum_{0<s\leq t} \{ F(V_s)-F(V_{s-})-L_kF(V_{s-})\Delta W^k_s\}.
\end{align*}
On the other hand, by It\^o's formula,
\begin{align}
F(U_t)-F(U_0)=&\int_0^tdF(U_s)\circ dU_s \nonumber \\
&+\sum_{0<s\leq t} \{ F(U_s)-F(U_{s-})-\langle dF(U_{s-}),\Delta U_s\rangle \}.\label{UIto}
\end{align}
Since $(\Delta U,U)$ is horizontal and satisfies $\Delta W^k=\langle \mathfrak{s}^k,\Delta U\rangle$, it holds that
\[
\Delta U_s=\Delta W^k_s L_k(U_{s-})=\Delta V_s,\ s\geq 0.
\]
Note that $dF=L_kF\mathfrak{s}^k+X_{\alpha}^{\sharp} F\theta ^{\alpha}$, where $X^{\sharp}_{\alpha}$ is a vertical vector field on $\mathcal{O}(M)$ given in \eqref{verticalvec} in \sref{bundlemetric}. Then by \pref{falpha}, equation \eqref{UIto} can be written as
\begin{align*}
F(U_t)-F(U_0)=&\int_0^tL_kF(U_s)\circ dW^k_s\\
&+\sum_{0<s\leq t} \{ F(U_s)-F(U_{s-})-L_kF(U_{s-})\Delta W^k_s\}.
\end{align*}
This implies that $U$ is also the stochastic development of $W$. Therefore by uniqueness of the solution of the SDE, we have $U=V$, $\Prob$-a.s. Thus we deduce $(\Delta U,U)=(\Delta V,V).$
\end{proof}
\begin{lem}\label{finite}
Let $(\Delta X,X)$ a $TM\times M$-valued process satisfying (i), (ii), (iii) in \dref{Pic3.1} and \eqref{eq:v}. Suppose that for all $\omega \in \Omega$ and $t>0$, $\Delta X$ satisfies
\[
\sum_{0<s\leq t}|\Delta X_s(\omega)|^2<\infty.
\]
Then for all $\gamma \in C_g$, $\omega \in \Omega$ and $t>0$, the number of $s\in [0,t]$ with
\[
\Delta X_s(\omega) \neq \gamma (X_{s-}(\omega),X_s(\omega))
\]
is finite. Furthermore, $(\Delta X,X)$ is a $\Delta$-semimartingale.
\end{lem}
\begin{proof}
Fix $t\geq 0$, $\omega \in \Omega$, and a connection rule $\gamma \in C_g$. Let $r:M\to [0,\infty]$ be an injective radius. Then $r$ is positive and continuous on $M$. Since $\overline{X(\omega, [0,t])}$ is compact, $r$ admits the minimum value $r_0$ on the set. Since $\displaystyle \sum_{0<s\leq t}|\Delta X_s(\omega)|^2<\infty$, the number of $s\in [0,t]$ with $|\Delta X_s(\omega)|\geq r_0$ is finite. For $s$ with $|\Delta X_s|<r_0$, we have $\Delta X_s(\omega)=\gamma (X_{s-}(\omega),X_s(\omega))$. Therefore the number of $s\in [0,t]$ with $\Delta X_s(\omega)\neq \gamma(X_{s-}(\omega),X_s(\omega))$ is finite. Furthermore, for all $T^*M$-valued c\`{a}dl\`{a}g processes $\phi$ above $X$, \eqref{iv} is satisfied and hence $(\Delta X,X)$ is a $\Delta$-semimartingale.
\end{proof}
\begin{proof}[Proofs of \tref{main} (1) and (3)]
By virtue of (2) of \tref{main} (2), any horizontal $\Delta$-semimartingale $(\Delta U,U)$ can be described as the development of an anti-development $W$ of $(\Delta U,U)$. Thus we deal with (1) and (3) in \tref{main} simultaneously. Since $\text{Exp}_{U_{s-}}\Delta W^k_s L_k=U_s$, $\Delta U$ is the initial velocity of the geodesic from $U_{s-}$ to $U_s$ with regard to $\tilde g$ and $\Delta X_s$ is the initial velocity of the geodesic from $X_{s-}$ to $X_s$. Since $[W,W]_t(\omega)<\infty$, $\displaystyle \sum_{0<s\leq t}|\Delta W_s(\omega)|^2<\infty$ for any fixed $t\geq 0$. Hence $\displaystyle \sum_{0<s\leq t} |\Delta X_s(\omega)|^2<\infty$, $\displaystyle \sum_{0<s\leq t} |\Delta U_s(\omega)|^2<\infty$ because
\[
|\Delta W|=|\Delta U|=|\Delta X|.
\]
Therefore by the previous lemma, $(\Delta X,X)$ and $(\Delta U,U)$ are $\Delta$-semimartingales. This completes the proof of \tref{main} (3). By \lref{intvec} and \pref{covariant} in \sref{bundlemetric},
\begin{align*}
\int \theta (U_-)\circ dU&=\int \langle \theta, L_k\rangle \circ dW^k=0,\\
\int \tilde \nabla \theta (U_-)\, d[U,U]^c&=\int \tilde \nabla \theta (L_k(U_-),L_l(U_-))\, d[W^k,W^l]^c=0.
\end{align*}
Then we obtain
\[
\int \theta (U_-)\, dU=0.
\]
Similarly, it holds that
\begin{align*}
\int \mathfrak{s}^k (U_-)\circ dU&=\int \langle \mathfrak{s}^k,L_l\rangle \circ dW^l=\int \delta^k_l \circ dW^l=W^k,\\
\int \tilde \nabla \mathfrak{s}(U_-)\, d[U,U]^c&=\int \tilde \nabla \mathfrak{s}(L_k(U_-),L_l(U_-))\, d[W^k,W^l]^c=0.
\end{align*}
Hence we obtain
\[
\int \mathfrak{s}(U_-)\, dU=W.
\]
This completes the proof of (1).
\end{proof}
%%%%%%%%%%%%%%%%%%%%%%%%%%%%%%%%%%%%
\subsection{Proof of \tref{main} (4)}
In this subsection, we prove (4) dividing \tref{main} (4) into \tsref{4-1}, \ref{4-2}, and \ref{4-3}. Let $\gamma \in C_g$. (We can take such a connection rule by \pref{connection}.)
We set
\[
C:=\{ (x,u)\in M\times \mathcal{O}(M)\mid \pi u=x \}
\]
and define a map $\varphi:C\times M\to \mathcal{O}(M)$ by
\[
\varphi (x,u,y):=\tilde \eta_{x,y} (1),\ x,y\in M,\ u\in \mathcal{O}_x(M),
\]
where $\eta_{x,y} (t)$ is the geodesic with $\eta_{x,y,u} (0)=x,\ \eta_{x,y,u}'(0)=\gamma(x,y)$, and $\tilde \eta_{x,y,u}$ is a horizontal lift of $\eta$ whose initial value is $u$. Since $\gamma$ is measurable on $M\times M$ and differentiable on the diagonal set of $M\times M$, $\varphi$ is a constraint coefficient of SDE's. Suppose that an $M$-valued semimartingale $X$ is defined on $[0,\infty )$, that is, $X$ does not explode in finite time. Define the connection rule $\tilde \gamma$ on $(\mathcal{O}(M),\tilde g)$ as follows:
\begin{align*}
&\tilde \gamma(u,v)\\
&=\left\{ \begin{array}{ll}
\text{The initial velocity of minimal geodesic from}\ u\ \text{to}\ v,\ &(u,v)\in D_{\mathcal{O}(M)},\\
\left( \pi _*|_{H_u}\right)^{-1}\gamma(\pi u, \pi v),\ &(u,v)\in C_{\mathcal{O}(M)},
\end{array}\right.
\end{align*}
where
\begin{align*}
%&C_x:\ \text{The cut locus of}\ x,\\
%&C_M:=\{ (x,y)\in M\times M\mid y\in C_x \},\\
%&C_{\mathcal{O}(M)}:=\{ (u,v)\in \mathcal{O}(M)\mid (\pi u,\pi v)\in C_M \},\\
D_{\mathcal{O}(M)}&=\{ (u,v)\in \mathcal{O}(M)\times \mathcal{O}(M)\mid u\ \text{and}\ v\ \text{can be connected}\\
&\hspace{50mm} \text{by a unique geodesic with respect to}\ \tilde g\},\\
C_{\mathcal{O}(M)}&=\mathcal{O}(M)\times \mathcal{O}(M)\backslash D_{\mathcal{O}(M)},
\end{align*}
and $H_u$ is the horizontal subspace of $T_u\mathcal{O}(M)$ (See \sref{bundlemetric}). For later use, we start with three lemmas.
\begin{lem}\label{tildegamma}
For $u,v\in \mathcal{O}(M)$ and $a\in O(d)$,
\[
R_{a*}\tilde{\gamma}(u,v)=\tilde{\gamma}(ua,va),
\]
where
\[
R_a:\mathcal{O}(M)\to \mathcal{O}(M),\ R_au=ua,\ a\in O(d).
\]
\end{lem}
\begin{proof}
It holds that $(u,v)\in D_{\mathcal{O}(M)}\Leftrightarrow (ua,va)\in D_{\mathcal{O}(M)}$ for $u,v\in \mathcal{O}(M)$ and $a\in O(d)$ by \pref{geo} in \sref{bundlemetric}. First suppose $(u,v)\in D_{\mathcal{O}(M)}$. Set $\tau (t)=\exp_ut\tilde{\gamma}(u,v)$. Then $R_{a}\tau (t)$ is a unique minimal geodesic from $ua$ to $va$ by \pref{geo} in \sref{bundlemetric} again. Therefore
\[
R_{a*} \tilde \gamma(u,v)=R_{a*}\frac{d\tau}{dt}(0)=\frac{d}{dt}R_a\tau (0)=\tilde \gamma (ua,va).
\]
Next we suppose that $(u,v)\in C_{\mathcal{O}(M)}$, then $\tilde \gamma(u,v)$ is the horizontal lift of $\gamma (\pi u,\pi v)$ at $u$. Thus $R_{a*}\tilde \gamma(u,v)$ is the horizontal lift of $\gamma(\pi u,\pi v)$ at $ua$ and equals $\tilde \gamma (ua,va)$.
\end{proof}

\begin{lem}\label{horizon2}
Let $(\Delta U,U)$ be an $\mathcal{O}(M)$-valued $\Delta$-semimartingale satisfying \eqref{eq:v}. Then $(\Delta U,U)$ is horizontal if and only if it holds that
\begin{gather}
\int \theta(U_-)\, dU=0.\label{itohor}
\end{gather}
\end{lem}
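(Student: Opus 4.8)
The plan is to pass between the Stratonovich integral $\int \theta(U_-)\circ dU$ appearing in the definition of horizontality and the It\^o integral $\int \theta(U_-)\, dU$ in \eqref{itohor}, using the correction formula
\[
\int \theta^\alpha(U_-)\circ dU=\int \theta^\alpha(U_-)\, dU+\frac12\int \tilde\nabla\theta^\alpha(U_-)\, d[U,U]^c
\]
from \dref{Stratonovich}, together with \pref{covariant}, which gives $\tilde\nabla\theta(A,A)=0$ for \emph{every} $A\in T_u\mathcal O(M)$ (no horizontality needed). So if one could argue $\int \tilde\nabla\theta(U_-)\,d[U,U]^c=0$ directly from this pointwise identity, the two integrals would coincide and the lemma would be immediate. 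The subtlety is that the quadratic variation $\int b\, d[U,U]^c$ is defined (\pref{Pic3.6}) for $T^*M\otimes T^*M$-valued processes $b$, and the identity $b_{s}(v,v)=0$ for all $v$ does \emph{not} by itself force $\int b\,d[U,U]^c=0$ unless $b$ is symmetric, since the quadratic variation only sees the symmetric part; but $\tilde\nabla\theta^\alpha$ need not be symmetric. Hence one must be slightly more careful.

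First I would reduce to a single coordinate chart via an exhausting sequence of stopping times, as in \pref{coordinate}, so that it suffices to prove the equivalence on a random interval $(S,T)$ on which $U$ stays in a coordinate neighborhood. On such an interval, expand $U$ in the frame $\{L_k,X_\alpha^\sharp\}$ with dual coframe $\{\mathfrak s^k,\theta^\alpha\}$, writing $\int \theta^\alpha \circ dU$ and $\int\mathfrak s^k\circ dU$ as the ``coordinate'' semimartingales. Using \eqref{co} one computes that the symmetric part of $\tilde\nabla\theta^\alpha$ evaluated on any pair of frame vectors vanishes: indeed $\tilde\nabla\theta^\alpha(L_k,L_l)=\tfrac12\Omega^\alpha_{lk}$ is antisymmetric in $k,l$, while $\tilde\nabla\theta^\alpha(L_k,X_\gamma^\sharp)=\tilde\nabla\theta^\alpha(X_\beta^\sharp,L_l)=0$ and $\tilde\nabla\theta^\alpha(X_\beta^\sharp,X_\gamma^\sharp)=-c^\alpha_{\beta\gamma}$ is antisymmetric in $\beta,\gamma$. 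Since the quadratic variation $\int b\,d[U,U]^c$ only depends on the symmetrization of $b$ (this is \rref{quad}, or the pointwise identity $b(v,v)=b^{\mathrm{sym}}(v,v)$ feeding into \pref{Pic3.6}), we get $\int\tilde\nabla\theta^\alpha(U_-)\,d[U,U]^c=0$.

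Granting this, the two integrals $\int\theta(U_-)\circ dU$ and $\int\theta(U_-)\,dU$ are indistinguishable, so $\int\theta(U_-)\circ dU=0$ holds for all $t$ if and only if $\int\theta(U_-)\,dU=0$ for all $t$; this is exactly the claimed equivalence between horizontality (Definition, part (1)) and \eqref{itohor}. The hypothesis \eqref{eq:v} and the $\Delta$-semimartingale structure are used to ensure that all the integrals in sight are well defined and that the jump terms match up: by \pref{Pic3.5}, $\Delta\int\theta(U_-)\,dU_s=\langle\theta(U_{s-}),\Delta U_s\rangle$, and as noted right after the first Definition this equals $\Delta\int\theta(U_-)\circ dU_s$, so no discrepancy arises at jump times.

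The main obstacle is precisely the non-symmetry point: one has to be careful that $\tilde\nabla\theta$ is not a symmetric $2$-tensor, so the naive ``$\tilde\nabla\theta(A,A)=0\Rightarrow$ integral is zero'' reasoning is insufficient, and the argument genuinely needs the structural computation \eqref{co} showing antisymmetry of $\tilde\nabla\theta$ on frame pairs, combined with the fact (\pref{Pic3.6}, \rref{quad}) that $\int b\,d[U,U]^c$ sees only $b^{\mathrm{sym}}$. Everything else is routine bookkeeping with \pref{coordinate} and \dref{Stratonovich}.
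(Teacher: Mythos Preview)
Your proof is correct, but the ``subtlety'' you flag is not one. The identity $\tilde\nabla\theta^\alpha(A,A)=0$ for \emph{all} $A$ (\pref{covariant}) is, by polarization, exactly the statement that the symmetric part of $\tilde\nabla\theta^\alpha$ vanishes. Since, as you yourself say, $\int b\,d[U,U]^c$ depends only on $b^{\mathrm{sym}}$ (\rref{quad}, or directly from the Riemann-sum description in \pref{Pic3.6}, where every term is of the form $b(v,v)$), the ``naive'' reasoning $\tilde\nabla\theta(A,A)=0\Rightarrow\int\tilde\nabla\theta\,d[U,U]^c=0$ is in fact valid. Your coordinate computation via \eqref{co} merely re-proves that $\tilde\nabla\theta^\alpha$ is antisymmetric, which is equivalent to what \pref{covariant} already gives; the detour is harmless but unnecessary.

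Comparing with the paper: for the converse direction (\eqref{itohor} $\Rightarrow$ horizontal) the paper does precisely the ``naive'' step, invoking $\tilde\nabla\theta(A,A)=0$ on the approximating terms $\tilde\gamma(U_S,U_T)$ and $\Delta U_s$ from \pref{Pic3.6} to get $\int\tilde\nabla\theta\,d[U,U]=0$, hence $\int\theta\circ dU=\int\theta\,dU=0$. For the forward direction the paper takes a different route from you: instead of showing the correction term vanishes in general, it cites the already-proven parts (1) and (3) of \tref{main}, which say (via (2)) that a horizontal $(\Delta U,U)$ satisfying \eqref{eq:v} is a stochastic development, and for developments both the Stratonovich and It\^o integrals of $\theta$ vanish. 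Your approach handles both directions uniformly by showing the It\^o--Stratonovich correction is identically zero; this is more self-contained and makes the role of hypothesis \eqref{eq:v} less essential for this lemma than the paper's argument suggests.
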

\begin{proof}
By (1) and (3) of \tref{main}, if $(\Delta U,U)$ is horizontal, then \eqref{itohor} holds. Conversely, suppose that \eqref{itohor} holds. For any stopping times $S,T$ with $S\leq T$, it holds that
\begin{gather}
\tilde \nabla \theta (U_S)(\tilde \gamma(U_S,U_T),\tilde \gamma (U_S,U_T))=0 \label{zero1}
\end{gather}
by \pref{covariant} in \sref{bundlemetric}. Similarly, it holds that for $s\geq 0$,
\begin{equation}\label{zero2}
\begin{split}
\tilde \nabla \theta (U_{s-})(\tilde \gamma(U_{s-},U_s),\tilde \gamma(U_{s-},U_s))=0,\\
\tilde \nabla \theta (U_{s-})(\Delta U_s,\Delta U_s)=0.
\end{split}
\end{equation}
\eqref{zero1} and \eqref{zero2} imply
\[
\int \tilde \nabla \theta (U_-)\, d[U,U]=0.
\]
Therefore $(\Delta U,U)$ is horizontal.
\end{proof}
\begin{lem}\label{connectionhorizon}
Let $X$ be an $M$-valued semimartingale. Fix an $\mathcal{F}_0$-measurable $\mathcal{O}(M)$-valued random variable $u_0$ such that $u_0\in \mathcal{O}_{X_0}(M)$. Suppose that a semimartingale $U$ valued in $\mathcal{O}(M)$ is defined by the solution of the following SDE\\
\begin{equation}\label{connectionSDE}
\left\{ \begin{array}{ll}
\overset{\triangle}{d}U=\varphi(U,\overset{\triangle}{d}X),\\
U_0=u_0.
  \end{array} \right.
\end{equation}
Then $U$ does not explode in finite time with probability one and
\begin{gather}
\int \theta \circ \tilde \gamma dU=\int \theta \ \tilde \gamma dU=0\label{gamma1}.
\end{gather}
In particular, $(\tilde{\gamma} (U_-,U),U)$ is a horizontal lift of $(\gamma (X_-,X),X)$. Furthermore, it holds that
\begin{gather}
\int \frak{s} \circ \tilde \gamma dU=\int \frak{s} \ \tilde \gamma dU,\label{gamma2}\\
\int \frak{s}^k(U_-) \, \tilde \gamma dU=\int U_-\varepsilon ^k\ \gamma dX,\ k=1,\dots,d.\label{gamma3}
\end{gather}
\end{lem}
\begin{proof}
Since $U$ is the solution of \eqref{connectionSDE}, it holds that
\[
\varphi(X_{t-},U_{t_-},X_t)=U_t.
\]
This implies that $U_{t-}$ and $U_t$ can be connected by a horizontal minimal geodesic with respect to the metric $\tilde g$ and one of the minimal geodesics is the horizontal lift of $\exp_{X_{t-}} t\gamma (X_{t-},X_t)$ by \pref{geodesic}. Therefore $\tilde \gamma (U_{t-},U_t)$ is horizontal by the definition of $\tilde \gamma$. Let $\zeta$ be an explosion time of $U$ and assume
\[
\Prob (\zeta <\infty)>0.
\]
Then for $\omega \in \{ \zeta <\infty \}$, $\{ U_t(\omega) \}_{0\leq t<\zeta (\omega)}$ is not relatively compact. On the other hand, since $X$ does not explode in finite time, 
\[
A(\omega):=\{ X_t(\omega) \mid 0\leq t \leq \zeta(\omega) \}
\]
is relatively compact in $M$. Now it holds that $\{ U_t(\omega) \}_{0\leq t<\zeta (\omega)}\subset \pi ^{-1}(\overline{A(\omega)})$ and the right-hand side is compact because $O(d)$ is compact. This is a contradiction. Therefore $\zeta =\infty$, $\Prob$-a.s. Next we will show the second claim. Since $U$ is a solution of the SDE \eqref{connectionSDE}, for any $\overset{\triangle}{\mathbb{T}}\mathcal{O}(M)$-valued c\`{a}dl\`{a}g process $\Theta$, we have
\[
\int \Theta \ \overset{\triangle}{d}U=\int \Theta (\phi (X_-,U_-,\cdot)) \ \overset{\triangle}{d}X.
\]
In particular, we have
\begin{align*}
\int \theta (U_-)\ \tilde \gamma dU&=\int \theta_{U_-}(\tilde \gamma (U_-,\cdot))\ \overset{\triangle}{d}U\\
&=\int \theta_{U_-}(\tilde \gamma (U_-,\phi(X_-,U_-,\cdot)))\ \overset{\triangle}{d}X\\
&=0.
\end{align*}
Therefore by \lref{horizon2}, \eqref{gamma1} holds and consequently, $(\tilde \gamma (U_-,U),U)$ is a horizontal lift of $(\Delta X,X)$.
%In the same way, by \pref{covariant}, it holds that
%\begin{align*}
%\int \tilde \nabla \theta (U_-)\ d[U,U]=&\int \tilde \nabla \theta _{U_-}(\tilde \gamma(U_-,\cdot),\tilde \gamma (U_-,\cdot))\ \overset{\triangle}{d}U\\
%=&\int \tilde \nabla \theta_{U_-}(\tilde \gamma(U_-,\phi(X_-,U_-,\cdot)),\tilde \gamma (U_-,\phi(X_-,U_-,\cdot)))\ \overset{\triangle}{d}X\\
%=&0.
%\end{align*}
%Therefore
%\[
%\int \theta \circ \tilde \gamma dU=\int \theta \ \tilde \gamma dU=0.
%\]
Therefore \eqref{gamma2} can be obtained by \tref{main} (1). Finally we will show \eqref{gamma3}.
%Similarly we obtain
%\[
%\int \tilde \nabla \frak{s} \ d[U,U]^c=0.
%\]
%Thus we have
%\[
%\int \frak{s}\circ \tilde \gamma dU=\int \frak{s} \ \tilde \gamma dU,
%\]
We begin with the left-hand side of the claimed equation:
\begin{align*}
\int \mathfrak{s}^k(U_-)\ \tilde \gamma dU&=\int \pi^*(U_-\varepsilon^k) \ \tilde \gamma dU\\
&=\int \pi^*(U_-\varepsilon^k)(\tilde \gamma(U_-,\cdot))\ \overset{\triangle}{d}U\\
&=\int \pi^*(U_-\varepsilon^k)(\tilde \gamma (U_-,\phi (X_-,U_-,\cdot)))\ \overset{\triangle}{d}X\\
&=\int U_-\varepsilon^k(\gamma (X_-,\cdot ))\ \overset{\triangle}{d}X\\
&=\int (U_-\varepsilon^k)\ \gamma dX.
\end{align*}
Therefore we obtain \eqref{gamma3} and this completes the proof.
\end{proof}

Now \lref{connectionhorizon} guarantees the existence of horizontal lift of $\Delta$-semimartingales of the form $(\gamma(X_-,X),X)$ with $\gamma \in C_g$. Next we show the existence of the horizontal lift of $(\Delta X,X)$ with \eqref{eq:v}.
\begin{thm}\label{4-1}
Let $(\Delta X,X)$ be an $M$-valued $\Delta$-semimartingale and $u_0$ an $\mathcal{O}_{X_0}(M)$-valued $\mathcal{F}_0$-measurable random variable. Then there exists a horizontal lift of $(\Delta X,X)$ with $U_0=u_0$ satisfying \eqref{eq:v}.
\end{thm}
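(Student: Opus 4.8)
The plan is to realize the lift by concatenating solutions of the SDE \eqref{connectionSDE} across the (locally finitely many) times at which $X$ fails to jump along the connection rule $\gamma\in C_g$, and to insert by hand, at each such time, the horizontal lift of $\Delta X_s$. Keep $\gamma$, $\phi$ and $\tilde\gamma$ as fixed just before \lref{tildegamma}. First I would isolate the \emph{bad jump times}: by condition (iv) in \dref{Pic3.1} together with the argument of \lref{finite}, for every $\omega$ and every $t$ the set $\{s\in(0,t]:\Delta X_s\neq\gamma(X_{s-},X_s)\}$ is finite. Since $X$ and $\Delta X$ are c\`adl\`ag and adapted, the jump times of $X$ are exhausted by stopping times, and the bad ones among them can be reordered into an increasing sequence of stopping times $0=\tau_0<\tau_1<\tau_2<\cdots$ with $\tau_i\uparrow\infty$.

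\emph{The construction.} Set $U_0=u_0$. Inductively, suppose $U_{\tau_i}$ is defined, $\mathcal{F}_{\tau_i}$-measurable and lies above $X_{\tau_i}$. On $[\tau_i,\tau_{i+1})$ let $U$ be the solution of \eqref{connectionSDE} driven by $X$ with initial value $U_{\tau_i}$ at time $\tau_i$; this exists by \tref{solution2} (applied after a time shift), and the nonexplosion argument of \lref{connectionhorizon}, using compactness of $O(d)$ and that $X$ does not explode, shows it stays in $\pi^{-1}(\overline{X([\tau_i,t])})$ and hence is defined up to $\tau_{i+1}$. Using the left limit $U_{\tau_{i+1}-}$ produced by this piece, put
\[
U_{\tau_{i+1}}:=\exp_{U_{\tau_{i+1}-}}\bigl(\widetilde{\Delta X_{\tau_{i+1}}}\bigr),\qquad \widetilde{\Delta X_{\tau_{i+1}}}:=\bigl(\pi_{*\mid H_{U_{\tau_{i+1}-}}}\bigr)^{-1}\Delta X_{\tau_{i+1}},
\]
where $\exp$ denotes the $\tilde g$-exponential map; this is well defined because the $\tilde g$-geodesic issuing from a horizontal vector is the horizontal lift of the $g$-geodesic issuing from its projection (by \pref{integralcurve} and \pref{geodesic}), and the latter is defined on $[0,1]$ since $M$ is complete. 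Finally put $\Delta U_s:=\tilde\gamma(U_{s-},U_s)$ for $s\notin\{\tau_i\}_{i\geq1}$ and $\Delta U_{\tau_i}:=\widetilde{\Delta X_{\tau_i}}$. As a concatenation, finite on compacts, of semimartingales, $U$ is a semimartingale.

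\emph{Verification of the four required properties.} (a) $\pi U=X$ and $\pi_*\Delta U=\Delta X$: on $[\tau_i,\tau_{i+1})$ the coefficient satisfies $\pi\phi(x,u,y)=\exp_x\gamma(x,y)=y$ since $\gamma\in C_g$, so the SDE piece projects onto $X$, and its jumps $\tilde\gamma(U_{s-},U_s)$ project onto $\gamma(X_{s-},X_s)=\Delta X_s$ at the good times; at $\tau_i$ we have $\pi U_{\tau_i}=\exp_{X_{\tau_i-}}\Delta X_{\tau_i}=X_{\tau_i}$ by \eqref{eq:v} and $\pi_*\widetilde{\Delta X_{\tau_i}}=\Delta X_{\tau_i}$. (b) $(\Delta U,U)$ satisfies \eqref{eq:v}: at $\tau_i$ this is the defining identity of $U_{\tau_i}$; at a good time $\tilde\gamma(U_{s-},U_s)$ is, by the definitions of $\tilde\gamma$ and $\phi$ and by \pref{geodesic}, the initial velocity of a $\tilde g$-geodesic from $U_{s-}$ to $U_s$. (c) $(\Delta U,U)$ is a $\Delta$-semimartingale: since $\pi_*$ restricts to an isometry $H_u\to T_{\pi u}M$ for $\tilde g$, one has $|\Delta U_s|_{\tilde g}=|\Delta X_s|_g$, so $\sum_{s\leq t}|\Delta U_s|^2<\infty$, and the proof of \lref{finite}, which uses only the local geometry of $\mathcal{O}(M)$ and the relative compactness of $\{U_s:s\leq t\}$ noted in (a), applies verbatim. (d) $(\Delta U,U)$ is horizontal: by \lref{horizon2} it suffices that $\int\theta(U_-)\,dU=0$; on each $[\tau_i,\tau_{i+1})$ this It\^o integral vanishes because there $U$ coincides with a solution of \eqref{connectionSDE} (\lref{connectionhorizon} together with \pref{covariant}), and each jump contributes $\langle\theta(U_{s-}),\Delta U_s\rangle=0$ since $\Delta U_s$ is horizontal in both cases; summing these contributions via \pref{Pic3.5} gives $\int\theta(U_-)\,dU=0$.

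\emph{Main obstacle.} The delicate part is the bookkeeping of the concatenation: checking that the bad jump times really are stopping times, that Cohen's SDE can be restarted at each $\tau_i$ with the $\mathcal{F}_{\tau_i}$-measurable datum $U_{\tau_i}$, and, above all, that the It\^o integral $\int\theta(U_-)\,dU$ of the concatenated process splits into the (vanishing) contributions of the SDE pieces and the (vanishing) jump terms. It is exactly here that \lref{horizon2} is indispensable, since it lets us verify horizontality through the It\^o integral rather than through the Stratonovich integral across the gluing; the $O(d)$-equivariance recorded in \lref{tildegamma} is what guarantees the construction is consistent from one piece to the next.
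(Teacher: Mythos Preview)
Your construction is correct and builds the same process as the paper's, but the organization is different. The paper solves the SDE \eqref{connectionSDE} \emph{once}, globally, to obtain $U^{\gamma}$, and then writes the desired lift as $U_s=U^{\gamma}_s a_s$ for a piecewise-constant $O(d)$-valued process $a_s$ determined by the holonomy of the loop $c_s^2\cdot(c_s^1)^{-1}$ (the concatenation of the $\gamma$-geodesic reversed with the $\Delta X$-geodesic); horizontality on each interval is then reduced, via the $O(d)$-equivariance of $\tilde\gamma$ (\lref{tildegamma}) and $R_a^*\theta=Ad(a^{-1})\theta$, to the already-known vanishing $\int\theta\,\tilde\gamma\,dU^{\gamma}=0$. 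You instead restart the SDE at each bad time with the freshly computed initial datum $U_{\tau_i}$ and invoke (a time-shifted version of) \lref{connectionhorizon} directly on each piece. Both routes lean on exactly the same ingredients (\lref{connectionhorizon}, \lref{horizon2}, \lref{finite}, \pref{geodesic}); the $O(d)$-equivariance that you mention only in passing is what shows your restarted pieces are the $R_{a_{\tau_i}}$-translates of $U^{\gamma}$, i.e.\ that the two constructions agree. The paper's packaging has one concrete payoff: since any candidate lift can be written as $U^{\gamma}a$ for \emph{some} $O(d)$-valued process $a$, the uniqueness argument (\tref{4-2}) reduces to showing that $a$ is forced to be the specific product \eqref{atbt}, which is exactly how the paper proceeds via \lref{lemj}. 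Your restart formulation would require recasting uniqueness separately.

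Two small points worth tightening. First, the finiteness of bad jump times on compacts comes from \lref{finite}, whose hypothesis is $\sum_{s\le t}|\Delta X_s|^2<\infty$; this is not literally condition (iv) of \dref{Pic3.1} but follows from the finiteness of the quadratic variation $\int g(X_-)\,d[X,X]$ since $(\Delta X,X)$ satisfies \eqref{eq:v}. Second, when you apply \lref{finite} to $(\Delta U,U)$ you are working on $(\mathcal{O}(M),\tilde g)$ rather than $(M,g)$; the proof of \lref{finite} goes through because $\{U_s:s\le t\}$ is relatively compact (it sits in $\pi^{-1}$ of a compact set and the fibers are compact), exactly as you note.
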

\begin{proof}
Fix a connection rule $\gamma \in C_g$. Let $U^{\gamma}$ be the horizontal lift of $(\gamma (X_-,X),X)$. If an $\mathcal{O}(M)$-valued semimartingale $U$ satisfies $\pi U=X$, there exists an $O(d)$-valued process $a_s$ such that $U=U^{\gamma}a$. We will specify the process $a_s$. For each $s\geq 0$, Set
\begin{align*}
c_s^1(t)&:=\exp t\gamma (X_{s-},X_s),\ t\in[0,1],\\
c_s^2(t)&:=\exp t\Delta X_s,\ t\in [0,1],\\
(c_s^1)^{-1}(t)&:=c_s^1(1-t),\ t\in [0,1],
\end{align*}
and
\begin{align*}
c_s^2\cdot \left( c_s^1\right)^{-1}=\left\{
\begin{array}{ll}
(c_s^1)^{-1}(2t),\ t\in [0,\frac{1}{2}],\\
c_s^2(2t-1),\ t\in [\frac{1}{2},1].
\end{array}\right.
\end{align*}
Denote by  $\widetilde{c_s^2\cdot \left( c_s^1\right)^{-1}}$ the horizontal lift of $c_s^2\cdot \left( c_s^1\right)^{-1}$ starting at $U_s^{\gamma}$. Then there exists a unique element $b_s\in O(d)$ satisfying
\begin{gather}
\widetilde{c_s^2\cdot \left( c_s^1\right)^{-1}}(1)=U_s^{\gamma}b_s.\label{bt}
\end{gather}
Since $b_s$ equals the unit element $e$ in $O(d)$ for $s\geq 0$ with $\gamma (X_{s-},X_s)=\Delta X_s$, $b_s$ equals the unit element except for finite number of $s\in [0,t]$ for fixed $t\geq 0$ by \lref{finite}. Let $0<T_1<T_2<\cdots$ be a sequence of stopping times which exhausts the time $s$ with $\gamma (X_{s-},X_s)\neq \Delta X_s$. We define $O(d)$-valued processes $\delta_s$, $a_s$ as follows:\\
\begin{equation}\label{at}
\begin{split}
a_s&=\delta _s=e,\ s\in [0,T_1),\\
\delta_s&=\left( a_{T_{i-1}}\right)^{-1}b_{T_i}a_{T_{i-1}},\ s\in [T_i,T_{i+1}),\\
a_s&=a_{T_{i-1}}\delta_{T_i},\ s\in [T_i,T_{i+1}).
\end{split}
\end{equation}
Then $a_t$ satisfies
\begin{gather}
a_t=\prod_{0<s\leq t} b_{t-s}\label{atbt}
\end{gather}
for each $t$. We set
\[
U_s:=U^{\gamma}_sa_s,
\]
and
\[
\Delta U_s:=(\text{the horizontal lift of }\Delta X_s\ \text{at }U_{s-}).
\]
Obviously we have $\pi U=X$ and $\pi_*\Delta U=\Delta X$. Let us prove that $(\Delta U,U)$ is a $\Delta$-semimartingale satisfying \eqref{eq:v}. Denote the horizontal lift of $\Delta X_s$ at $U_{s-}^{\gamma}$ as $\Delta U_s'$. Then
\[
\Delta U_s=R_{a_{s-}*}\Delta U_s'.
\]
By the definition of $b_s$, it holds that
\[
\exp_{U_{s-}^{\gamma}}\Delta U'_s=U_{s-}^{\gamma}b_s.
\]
Therefore we obtain
\begin{align*}
\exp_{U_{s-}}\Delta U_s&=\exp_{U_{s-}a_{s-}}\left( R_{a_{s-}*}\Delta U_s'\right) \\
&=\left( \exp_{U_{s-}^{\gamma}}\Delta U'_s\right) a_{s-}\\
&=U_s^{\gamma}b_s a_{s-}\\
&=U_s^{\gamma}a_s\\
&=U_s.
\end{align*}
At the second equality, we use \pref{geodesic} (2) in \sref{bundlemetric}. Thus $(\Delta U,U)$ satisfies \eqref{eq:v} and consequently it is a $\Delta$-semimartingale by \lref{finite}. Next we prove that $(\Delta U, U)$ is horizontal. It suffices to show that
\begin{align}
\int \theta(U_-)\, dU=0,\label{ti1}
\end{align}
by \lref{horizon2}. For each $i$, it is obvious that
\begin{gather}
\langle \theta (U_{T_i-}),\Delta U_{T_i}\rangle=0\label{ti2}
\end{gather}
by the definition of $\Delta U$. %Now it holds that
%\[
%\Delta U_s=R_{a_{T_i}*}\Delta U_s^{\gamma},\ s\in (T_i,T_{i+1})
%\]
%for each $i$ since $\Delta U_s^{\gamma}=\tilde \gamma (U_{s-}^{\gamma},U_s^{\gamma})$ is the horizontal lift of $\Delta X_s$ at $U_{s-}^{\gamma}$ for $s\in (T_i,T_{i+1})$.
For $r,s\in (T_i,T_{i+1})$ with $r<s$, by \lref{tildegamma},
\begin{align*}
\langle \theta (U_{r}),\tilde \gamma (U_r,U_s)\rangle&=\langle \theta (U_r ^{\gamma}a_{T_i}),\tilde \gamma (U_r^{\gamma}a_{T_i}, U_s^{\gamma}a_{T_i})\rangle \\
&= \langle \theta (R_{a_{T_i}}U_r^{\gamma}),R_{a_{T_i}*}\tilde \gamma (U_r^{\gamma}, U_s^{\gamma})\rangle \\
&=Ad(a_{T_i}) \langle \theta  (U_r^{\gamma}),\tilde \gamma (U_r^{\gamma}, U_s^{\gamma})\rangle.
\end{align*}
Therefore it holds that
\begin{align}
\int _{(T_i,T_{i+1})} \theta (U_{s-})\, dU_s=Ad(a_{T_i})\int _{(T_i,T_{i+1})} \theta (U_s)\, \tilde \gamma dU_s=0 \label{ti3}
\end{align}
for each $i$. Combining \eqref{ti2} and \eqref{ti3}, we obtain \eqref{ti1}. Therefore we can deduce that $(\Delta U,U)$ is a horizontal lift of $(\Delta X,X)$.
\end{proof}
Next we show the uniqueness of the horizontal lift. Let $(\Delta U,U)$ be a horizontal lift of $(\Delta X,X)$ satisfying  $U_0=u_0$ and $\exp_{U_{s-}}\Delta U_s=U_s$. Let $\gamma \in C_g$ and denote the horizontal lift of $(\gamma(X_-,X),X)$ with an initial value $u_0$ by $U^{\gamma}$ again. Then there exists an $O(d)$-valued adapted process $a_t$ satisfying $U=U^{\gamma}a$ and such $a_t$ is unique since the action of $O(d)$ to each fiber of $\mathcal{O}(M)$ is free. Note that it holds that
\[
\Delta U_s=(\text{the horizontal lift of }\Delta X_s\ \text{at }U_{s-})
\]
by the definition of horizontal lifts. We will show that the process $a_t$ equals the process which is constructed in the proof of \tref{4-1}. We denote $U^{\gamma}$ by $V$ to simplify the notation.
\begin{lem}\label{lemat}
It holds that
\[
a_{s-}(\omega)\neq a_s(\omega)\Rightarrow \gamma (X_{s-}(\omega),X_s(\omega))\neq \Delta X_s (\omega)
\]
for $s\geq 0$, $\omega \in \Omega$.
\end{lem}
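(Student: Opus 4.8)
The plan is to prove the contrapositive, namely that $\gamma(X_{s-}(\omega),X_s(\omega))=\Delta X_s(\omega)$ forces $a_{s-}(\omega)=a_s(\omega)$. Fix such an $\omega$ and $s$, suppress $\omega$ from the notation, put $w_0:=\Delta X_s=\gamma(X_{s-},X_s)\in T_{X_{s-}}M$, and write $V:=U^{\gamma}$. Since $\gamma\in C_g$ and \eqref{eq:v} holds for $(\Delta X,X)$, the curve $c(t):=\exp_{X_{s-}}(tw_0)$, $t\in[0,1]$, is a minimal geodesic on $M$ from $X_{s-}$ to $X_s$. I would then introduce the horizontal lift $h_V$ of $c$ starting at $V_{s-}$ and the horizontal lift $h_U$ of $c$ starting at $U_{s-}$; both are defined on all of $[0,1]$ because $c$ is, and by \pref{geodesic}(1) both are minimal geodesics with respect to $\tilde g$.

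The next step is to identify the endpoints. Because $\eta_{X_{s-},X_s}=c$ (its initial velocity is $\gamma(X_{s-},X_s)=w_0$), the definition of the constraint coefficient $\phi$ together with the fact that $V$ solves \eqref{connectionSDE} gives $V_s=\phi(X_{s-},V_{s-},X_s)=h_V(1)$. On the other hand, since $(\Delta U,U)$ is a horizontal lift of $(\Delta X,X)$, the jump direction $\Delta U_s$ is horizontal and projects to $\Delta X_s=w_0$, so $\Delta U_s$ is exactly the horizontal lift of $w_0$ at $U_{s-}$, i.e.\ $\Delta U_s=h_U'(0)$; as $h_U$ is a geodesic it must be the $\tilde\nabla$-geodesic with initial data $(U_{s-},\Delta U_s)$, and hence the hypothesis $\exp_{U_{s-}}\Delta U_s=U_s$ yields $h_U(1)=U_s$.

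Finally I would exploit $O(d)$-equivariance. Since $U_{s-}=V_{s-}a_{s-}$, the curve $R_{a_{s-}}h_V$ projects to $c$ (because $\pi\circ R_{a_{s-}}=\pi$), is horizontal (because $R_g^{*}\theta=Ad(g^{-1})\circ\theta$ shows $R_{a*}$ preserves the horizontal distribution), and starts at $V_{s-}a_{s-}=U_{s-}$; by uniqueness of horizontal lifts of a curve, $R_{a_{s-}}h_V=h_U$. Evaluating at $t=1$ gives $U_s=h_U(1)=h_V(1)a_{s-}=V_sa_{s-}$, while $U_s=V_sa_s$ by the definition of $a$. Since the $O(d)$-action on each fibre of $\mathcal{O}(M)$ is free, $a_{s-}=a_s$, which is the contrapositive of the assertion. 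I do not expect a substantial obstacle: the argument is short, the existence of all the geodesics on $[0,1]$ is automatic since $M$ is complete and \eqref{eq:v} holds, and the only thing that needs care is bookkeeping which frame each vector is lifted at, which the two equivariance facts ($\pi\circ R_a=\pi$ and $R_a^{*}\theta=Ad(a^{-1})\circ\theta$) dispose of cleanly.
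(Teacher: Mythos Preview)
Your argument is correct and is essentially the paper's own proof, just written out in more detail: the paper also proves the contrapositive by noting that both $\Delta U_s$ and $\Delta V_s$ are horizontal lifts of the same vector $\gamma(X_{s-},X_s)$ at $U_{s-}$ and $V_{s-}$ respectively, then uses $O(d)$-equivariance to get $\exp t\Delta U_s=(\exp t\Delta V_s)a_{s-}$ and hence $U_s=V_sa_{s-}$, concluding via freeness of the action. Your explicit introduction of $h_U$, $h_V$ and the check that $R_{a_{s-}}h_V=h_U$ via uniqueness of horizontal lifts is exactly the content of that one-line equivariance step.
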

\begin{proof}
If $s$ and $\omega$ satisfy
\[
 \gamma (X_{s-}(\omega),X_s(\omega))=\Delta X_s (\omega),
\]
then each of $\Delta U_s(\omega)$ and $\Delta V_s(\omega)$ is the horizontal lift of $\gamma(X_{s-}(\omega),X_s(\omega))$ at $U_{s-}$ and $V_{s-}$, respectively. Therefore we have
\[
\exp t\Delta U_s(\omega)=\left( \exp t\Delta V_s(\omega)\right)a_{s-}(\omega),\ t\in [0,1].
\]
In particular, $U_s(\omega)=V_s(\omega)a_{s-}(\omega)$. On the other hand, the process $a_s$ satisfies $U_s(\omega)=V_s(\omega)a_s(\omega)$. Thus we can deduce $a_s(\omega)=a_{s-}(\omega)$.
\end{proof}
Applyng \lref{finite}, for any fixed $t\geq 0$ and $\omega \in \Omega$, the number of $s\in [0,t]$ with $a_s(\omega)\neq a_{s-}(\omega)$ is finite. Let $T_1<T_2<\cdots$ be a sequence of stopping times which exhausts the jumps of $a_t$. We can also observe that $\Delta U_s=\tilde \gamma (U_{s-},U_s)$, $s\in (T_i,T_{i+1})$. Next we will show that $a_s$ is constant on each $[T_i,T_{i+1})$. This can be shown in the same way as Theorem 3.2 in \cite{PE}.
\begin{lem}\label{lemj}
Suppose that $j$ is the canonical 1-form, which is a 1-form on $O(d)$ valued in $\mathfrak{o}(d)$ defined by
\[
j(fA)=A,\ f\in O(d),\ A\in \mathfrak{o}(d).
\]
Then it holds that
\begin{gather}
\int_{(T_i,T_{i+1})}j\circ da=0\label{canonicalform}
\end{gather}
and consequently, $a_t$ is constant on $(T_i,T_{i+1})$ for each $i$.
\end{lem}
\begin{proof}
Define $\Phi:\mathcal{O}(M)\times O(d)\to \mathcal{O}(M)$ and $\Phi_u:O(d)\to \mathcal{O}(M)$ by
\begin{align*}
\Phi(u,g)&=u\cdot g,\\
\Phi_u(g)&=u\cdot g.
\end{align*}
for $u\in \mathcal{O}(M)$. Then
\[
U_t=\Phi (V_t,a_t).
\]
Let $(u^{\alpha})$ be a local coordinate of $\mathcal{O}(M)$. Suppose that $V$ lives in the coordinate neighborhood on $[\sigma, \tau)\subset [T_i,T_{i+1})$, where $\sigma$ and $\tau$ are stopping times. We denote $V^{\alpha}=u^{\alpha}(V)$, and $U^{\alpha}=u^{\alpha}(U)$ on $[\sigma,\tau)$. Then by It\^o's formula,
\begin{align*}
U_t^{\alpha}-U_{\sigma}^{\alpha}=&\int_{(\sigma,t]}\left\{ \frac{\partial \Phi^{\alpha}}{\partial u^{\beta}}(V_{s-},a_s)\circ dV^{\beta}_s+\frac{\partial \Phi^{\alpha}}{\partial a^k}(V_{s-},a_s)\circ da^k_s \right\}\\
+&\sum_{\sigma<s\leq t}\left\{ \Phi^{\alpha}(V_s,a_s)-\Phi^{\alpha}(V_{s-},a_s)-\frac{\partial \Phi^{\alpha}}{\partial u^{\beta}}(V_{s-},a_s)\Delta V_s^{\beta} \right\}
\end{align*}
for $t\in (\sigma, \tau)$. Therefore by \pref{coordinate},
\begin{align}
\int_{(\sigma,t]}\theta \circ dU=&\int_{(\sigma,t]}\theta_{\alpha}\circ dU^{\alpha}+\sum_{\sigma<s\leq t}\left\langle \theta, \Delta U-\Delta U^{\alpha}\frac{\partial}{\partial u^{\alpha}}\right\rangle \nonumber \\
=&\int_{(\sigma,t]}\theta_{\alpha}\left\{ \frac{\partial \Phi^{\alpha}}{\partial u^{\beta}}(V_{s-},a_s)\circ dV^{\beta}_s+\frac{\partial \Phi^{\alpha}}{\partial a^k}(V_{s-},a_s)\circ da^k_s \right\} \nonumber \\
+&\sum_{\sigma<s\leq t} \theta_{\alpha}\left\{ \Phi^{\alpha}(V_s,a_s)-\Phi^{\alpha}(V_{s-},a_s)-\frac{\partial \Phi^{\alpha}}{\partial u^{\beta}}(V_{s-},a_s)\Delta V_s^{\beta} \right\} \nonumber \\ 
+&\sum_{\sigma<s\leq t}(\langle \theta,\Delta U\rangle -\theta_{\alpha}\Delta U^{\alpha}),\label{hor1}
\end{align}
where $(a^{\alpha})$ is a coordinate of $O(d)$ and $\theta=\theta_{\alpha}du^{\alpha}$ with $\theta_{\alpha}\in C^{\infty}(\mathcal{O}(M);\mathfrak{o}(d))$. Note that it holds that
\begin{align*}
\theta_{\alpha}\frac{\partial \Phi^{\alpha}}{\partial u^{\beta}}(u, a)&=\left(Ad(a^{-1})\theta \right)_{\beta}(u),\\
\theta_{\alpha}\frac{\partial \Phi^{\alpha}}{\partial a^k}(u,a)&= \left\langle d\left(\Phi_u^*\theta \right),\frac{\partial}{\partial a^k} \right\rangle = \left\langle j,\frac{\partial}{\partial a^k} \right\rangle.
\end{align*}
Therefore \eqref{hor1} can be rewritten as
\begin{align}
\int_{(\sigma,t]}\theta \circ dU=&\int_{(\sigma,t]}\left\{ \left( Ad(a^{-1})\theta \right)_{\beta}(V)\circ dV^{\beta}+\left\langle j,\frac{\partial}{\partial a^k} \right\rangle (a)\circ da^k \right\} \nonumber \\
&+\sum_{\sigma<s\leq t}\left( \langle \theta(U_{s-}), \Delta U_s\rangle -\left\langle Ad(a_s^{-1})\theta,\frac{\partial}{\partial u^{\beta}} \right\rangle \Delta V_s^{\beta} \right).\label{hor2}
\end{align}
Moreover, it holds that
\begin{align}
\int_{(\sigma, t]}Ad(a^{-1})(\theta)\circ dV=&\int_{(\sigma,t]}\left( Ad(a^{-1})\theta \right)_{\beta}\circ dV^{\beta} \nonumber \\
&+\sum_{\sigma<s\leq t}\{ \langle Ad(a_s^{-1})\theta,\Delta V_s\rangle-\langle Ad(a_s^{-1})\theta,\frac{\partial}{\partial u^{\beta}}\rangle \Delta V_s^{\beta} \} \label{hor3}
\end{align}
by \pref{coordinate} and
\begin{align}
\langle \theta (U_{s-}),\Delta U_s\rangle=&\langle \theta (V_{s-}a_s),R_{a_s*}\Delta V_s\rangle \nonumber \\
=&\langle R_{a_s}^*\theta (V_{s-}),\Delta V_s\rangle \nonumber \\
=& \langle Ad(a_s^{-1}) \theta (V_{s-}),\Delta V_s\rangle. \label{hor4}
\end{align}
Substituting \eqref{hor3} and \eqref{hor4} into \eqref{hor2}, we can deduce that
\begin{align*}
0=&\int_{(\sigma,t]}\theta \circ dU\\
=&\int_{(\sigma,t]}Ad(a^{-1})\theta \circ dV+\int_{(\sigma,t]}j\circ da\\
=&\int_{(\sigma,t]}j\circ da.
\end{align*}
Thus \eqref{canonicalform} follows and this implies that $a_s$ is constant on $(T_i,T_{i+1})$ for each $i$.
\end{proof}
By \lref{lemj}, we obtain the following uniqueness result.
\begin{thm}\label{4-2}
$(\Delta U,U)$ is uniquely determined.
\end{thm}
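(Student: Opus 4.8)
The plan is to pin down the $O(d)$-valued process $a$ in the representation $U=Va$ (with $V:=U^{\gamma}$) of an arbitrary horizontal lift of $(\Delta X,X)$ with $U_0=u_0$ satisfying \eqref{eq:v}: once $a$ is shown to be forced, $U=Va$ is unique, and then $\Delta U_s$ is unique as well, being — by the definition of a horizontal lift together with horizontality — the horizontal lift of $\Delta X_s$ at $U_{s-}$. The initial value is already determined, $a_0=V_0^{-1}u_0$, since the $O(d)$-action on the fibres of $\mathcal{O}(M)$ is free. Using \lref{finite}, fix stopping times $0<T_1<T_2<\cdots$ exhausting the ($\omega$ by $\omega$ locally finite) set $\{s>0:\gamma(X_{s-},X_s)\neq\Delta X_s\}$. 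By \lref{lemat} the process $a$ is constant off $\{T_i\}$, and by \lref{lemj} it is constant on $[0,T_1)$ and on each $[T_i,T_{i+1})$; hence $a$ is completely determined by the sequence $a_0,a_{T_1},a_{T_2},\dots$, and it remains to see that each $a_{T_i}$ is forced by $a_{T_i-}$.

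I would carry this out by a forward induction on $i$. Assuming $a$ known on $[0,T_i)$, the left limit $a_{T_i-}$ and hence $U_{T_i-}=V_{T_i-}a_{T_i-}$ are known. Horizontality forces $\Delta U_{T_i}$ to be the horizontal lift of $\Delta X_{T_i}$ at $U_{T_i-}$, which by the $R_{a_{T_i-}}$-equivariance of horizontal lifting equals $R_{a_{T_i-}*}\Delta U'_{T_i}$, where $\Delta U'_{T_i}$ is the horizontal lift of $\Delta X_{T_i}$ at $V_{T_i-}$. Then \eqref{eq:v} together with \pref{geodesic} (2) — so that $R_a$ carries geodesics on $(\mathcal{O}(M),\tilde g)$ to geodesics and horizontal ones to horizontal ones — gives
\[
U_{T_i}=\exp_{U_{T_i-}}\Delta U_{T_i}=\bigl(\exp_{V_{T_i-}}\Delta U'_{T_i}\bigr)a_{T_i-}.
\]
Unwinding the concatenated-path definition of $b_{T_i}$ from the proof of \tref{4-1} — using that the horizontal lift of a reversed path is the reversal of the horizontal lift, so that following the horizontal lift of $(c^1_{T_i})^{-1}$ from $V_{T_i}$ returns to $V_{T_i-}$ — identifies $\exp_{V_{T_i-}}\Delta U'_{T_i}=V_{T_i}b_{T_i}$, whence $a_{T_i}=b_{T_i}a_{T_i-}$. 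Since $b_{T_i}$ is built only from $X$, $\Delta X$ and $\gamma$, this closes the induction; comparison with \eqref{at} shows that $a$ is exactly the process constructed in \tref{4-1}, and in particular that it is unique.

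The computation at the jump times is the only real content. A streamlined alternative that avoids mentioning $b_{T_i}$ at all is to compare two horizontal lifts $U=Va$ and $\hat U=V\hat a$ directly: both $a$ and $\hat a$ are constant off the same set $\{T_i\}$ and on each $[T_i,T_{i+1})$, and $a_0=\hat a_0=V_0^{-1}u_0$; if $a=\hat a$ on $[0,T_i)$ then $U_{T_i-}=\hat U_{T_i-}$, so $\Delta U_{T_i}=\Delta\hat U_{T_i}$ (each being the horizontal lift of the same vector at the same frame), hence $U_{T_i}=\exp_{U_{T_i-}}\Delta U_{T_i}=\hat U_{T_i}$ by \eqref{eq:v}, and constancy propagates $a=\hat a$ to $[0,T_{i+1})$. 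The points needing care are the bookkeeping of left limits across the exhausting stopping times and the fact — from \lref{finite} and \lref{lemat} — that the relevant jump set is the same locally finite set for every horizontal lift; the geometric inputs (rotation commutes with horizontal lifting and with the geodesic exponential on $\mathcal{O}(M)$) are already available through \pref{geodesic} and \lref{tildegamma}.
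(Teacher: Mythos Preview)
Your proposal is correct and follows essentially the same route as the paper: after invoking \lref{lemat} and \lref{lemj} to reduce to the piecewise-constant structure of $a$, you perform the same jump computation $U_{T_i}=\exp_{U_{T_i-}}\Delta U_{T_i}=(\exp_{V_{T_i-}}\Delta U'_{T_i})a_{T_i-}=V_{T_i}b_{T_i}a_{T_i-}$ to obtain $a_{T_i}=b_{T_i}a_{T_i-}$, which is exactly the identity the paper derives (for all $s$ at once rather than by induction on $i$) before matching with \eqref{atbt}. Your alternative direct-comparison argument is a mild repackaging of the same idea; the only cosmetic slip is the phrase ``by \lref{lemat} the process $a$ is constant off $\{T_i\}$'' --- \lref{lemat} only rules out jumps there, and the constancy is precisely what \lref{lemj} supplies, as you note in the next clause.
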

\begin{proof}
It suffices to show that $a_t$ equals the process defined in \eqref{at}. Let $b_t$ be an $O(d)$-valued process defined through \eqref{bt}. Denote the horizontal lift of $\Delta X_s$ at $V_s$ by $\Delta U'_s$. Then for $s\geq 0$,
\begin{align*}
V_s a_s&=U_s\\
&=\exp_{V_{s-}a_{s-}} \left(R_{a_{s-}*}\Delta U_s'\right) \\
&=\left( \exp_{V_{s-}}\Delta U'_s\right) a_{s-}\\
&=V_sb_sa_{s-}.
\end{align*}
Since $a_t$ is constant on $(T_i,T_{i+1})$ for each $i$, it holds that
\[
a_t=\prod_{0\leq s<t}b_{t-s},\ t\geq 0.
\]
This completes the proof by \eqref{atbt}.
\end{proof}

We end the proof of \tref{main} (4) with the following theorem.
\begin{thm}\label{4-3}
It holds that
\[
W_t^i=\int_0^tU_{s-}\varepsilon ^i\, dX_s,
\]
where $W$ is the anti-development of $(\Delta U,U)$.
\end{thm}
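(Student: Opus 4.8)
Since $(\Delta U,U)$ is horizontal and satisfies \eqref{eq:v}, \tref{main} (2) shows that it is the development of its anti-development $W$, whence by \tref{main} (1), $W^i=\int \mathfrak{s}^i(U_-)\circ dU=\int \mathfrak{s}^i(U_-)\, dU$. Thus it suffices to prove the identity of It\^o integrals $\int_0^t\mathfrak{s}^i(U_{s-})\, dU_s=\int_0^tU_{s-}\varepsilon^i\, dX_s$, the left-hand side taken on $(\mathcal{O}(M),\tilde\nabla)$ and the right-hand side on $(M,\nabla)$ for the Levi-Civita connection $\nabla$. The plan is to transport \eqref{gamma3} of \lref{connectionhorizon} --- which is precisely this identity for the $\gamma$-horizontal lift $U^{\gamma}$ --- along the relation $U=U^{\gamma}a$ used in the proofs of \tsref{4-1} and \ref{4-2}, where $a$ is the $O(d)$-valued process of \eqref{at}, constant on each interval $(T_j,T_{j+1})$ between the stopping times $T_j$ exhausting $\{s:\Delta X_s\neq \gamma(X_{s-},X_s)\}$.

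On each $(T_j,T_{j+1})$ one has $\Delta X_s=\gamma(X_{s-},X_s)$, hence $\Delta U_s=\tilde\gamma(U_{s-},U_s)$ (both being the horizontal lift of $\Delta X_s$, using \eqref{eq:v} and the definition of $\tilde\gamma$), so by \pref{Pic3.5} (1) --- valid since $\gamma$ and $\tilde\gamma$ induce $\nabla$ and $\tilde\nabla$ near the diagonals --- the It\^o integrals restricted to this interval coincide with the corresponding $\tilde\gamma$- and $\gamma$-integrals. Using $U=R_{a_{T_j}}U^{\gamma}$ there, I would insert the equivariance $\tilde\gamma(ua,va)=R_{a*}\tilde\gamma(u,v)$ of \lref{tildegamma} together with $R_a^{*}\mathfrak{s}=a^{-1}\mathfrak{s}$ (read off from the proof of \lref{innerprod}) into the approximating Riemann sums of \pref{Pic3.2}, factoring the $\mathcal{F}_{T_j}$-measurable constant matrix $a_{T_j}$ out of the limit in probability, to obtain $\int_{(T_j,T_{j+1})}\mathfrak{s}^i(U_-)\, \tilde\gamma dU=(a_{T_j}^{-1})^i{}_l\int_{(T_j,T_{j+1})}\mathfrak{s}^l(U^{\gamma}_-)\, \tilde\gamma dU^{\gamma}$. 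By \eqref{gamma3} the right-hand side is $(a_{T_j}^{-1})^i{}_l\int_{(T_j,T_{j+1})}U^{\gamma}_-\varepsilon^l\, \gamma dX$, and since $U^{\gamma}_{s-}=U_{s-}a_{T_j}^{-1}$ on the interval, the defining relation $\langle u\varepsilon^k,v\rangle=\langle\varepsilon^k,u^{-1}v\rangle$ gives $(a_{T_j}^{-1})^i{}_l\, U^{\gamma}_{s-}\varepsilon^l=U_{s-}\varepsilon^i$; hence this equals $\int_{(T_j,T_{j+1})}U_-\varepsilon^i\, \gamma dX=\int_{(T_j,T_{j+1})}U_-\varepsilon^i\, dX$. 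For the jumps at the $T_j$, the last assertion of \pref{Pic3.5} gives $\Delta W^i_{T_j}=\langle\mathfrak{s}^i(U_{T_j-}),\Delta U_{T_j}\rangle$ and $\Delta\bigl(\int_0^{\cdot}U_{s-}\varepsilon^i\, dX_s\bigr)_{T_j}=\langle U_{T_j-}\varepsilon^i,\Delta X_{T_j}\rangle$, and these coincide since $\langle\mathfrak{s}^i(u),A\rangle=\langle u\varepsilon^i,\pi_*A\rangle$ and $\pi_*\Delta U_{T_j}=\Delta X_{T_j}$. As both processes vanish at $0$ and, by \lref{finite}, only finitely many $T_j$ lie in any bounded interval, reassembling the contributions of $\{0\}$, the open intervals $(T_j,T_{j+1})$, and the times $\{T_j\}$ gives $W^i_t=\int_0^tU_{s-}\varepsilon^i\, dX_s$ for every $t$.

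The step needing the most care is the interval identity $\int_{(T_j,T_{j+1})}\mathfrak{s}^i(U_-)\, \tilde\gamma dU=(a_{T_j}^{-1})^i{}_l\int_{(T_j,T_{j+1})}\mathfrak{s}^l(U^{\gamma}_-)\, \tilde\gamma dU^{\gamma}$: one must justify that the random partitions defining Picard's $\tilde\gamma$-integral (\pref{Pic3.2}) may be localized to $(T_j,T_{j+1})$ and that the adapted, piecewise-constant rotation $a_{T_j}$ passes through the limit in probability, so the pointwise scalarization relations along the partition survive to the integrals; likewise that the restriction of \eqref{gamma3} to $(T_j,T_{j+1})$ is legitimate. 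The reduction of It\^o to $\tilde\gamma$-/$\gamma$-integrals on the open intervals, the jump computation, and the final reassembly are routine once this is settled.
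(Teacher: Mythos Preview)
Your proposal is correct and follows essentially the same approach as the paper: decompose over the open intervals $(T_j,T_{j+1})$ and the jump times $T_j$, use that $\Delta X=\gamma(X_-,X)$ and $\Delta U=\tilde\gamma(U_-,U)$ on each interval to replace It\^o integrals by $\gamma$-/$\tilde\gamma$-integrals, then apply the equivariance $R_a^{*}\mathfrak{s}=a^{-1}\mathfrak{s}$ together with \lref{tildegamma} and \eqref{gamma3} to pass between $U$ and $U^{\gamma}$, and handle the jumps via the solder-form identity $\langle\mathfrak{s}^i(u),A\rangle=\langle u\varepsilon^i,\pi_*A\rangle$. The only cosmetic difference is that the paper runs the interval computation in the opposite direction (starting from $\int U_-\varepsilon^i\,\gamma dX$ and arriving at $\int \mathfrak{s}^i(U_-)\,\tilde\gamma dU$) and justifies the passage of the constant $a_{T_j}$ by the pointwise identity on pairs of stopping times rather than via the Riemann sums of \pref{Pic3.2}; your caution about localizing the integrals is warranted but routine, since the paper's integrals on random intervals are defined as differences.
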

\begin{proof}
%Fix $\gamma \in C_g$. Then $U$ can be written as $U=U^{\gamma}a$, where $a$ is an $O(d)$-valued process. Let $0=T_0<T_1<\cdots$ be a sequence of stopping times which exhausts jumps of $a_t$. Then
Since 
\[
\int_{(0,t]}U_{s-}\varepsilon^i\ dX=\sum_{m=1}^{\infty}\int_{(T_m\wedge t,T_{m+1}\wedge t]}U_{s-}\varepsilon^i\ dX,
\]
it suffices to show
\begin{align}
\int_{(T_m\wedge t,T_{m+1}\wedge t)}U_{s-}\varepsilon^i\ dX&=\int _{(T_m\wedge t,T_{m+1}\wedge t)}\mathfrak{s}^i(U_{s-})\ dU_s, \label{anti1}\\
\langle U_{T_m\wedge t-}\varepsilon^i, \Delta X_{T_m\wedge t}\rangle&= \langle \mathfrak{s}^i(U_{T_m\wedge t-}),\Delta U_{T_m\wedge t}\rangle \label{anti2}
\end{align}
for each $m$. \eqref{anti2} can be easily obtained by the definition of the solder form. Thus we will show \eqref{anti1}. Set $c_j^i(s)=a_i^j(s)$.
Since $a_t$ is constant and $\Delta X=\gamma (X_-,X)$ on $(T_m,T_{m+1})$ for each $m$, we have
%\[
%\int _{(T_m\wedge t,T_{m+1}\wedge t)}U_{s-}\varepsilon^i\ dX_s=\int_{(T_m\wedge t,T_{m+1}\wedge t)} U_{s-}\varepsilon^i\ \gamma dX_s,
%\]
%\[
%\int _{(T_m\wedge t,T_{m+1}\wedge t)}\mathfrak{s}^i(U_{s-})\ dU_s=\int_{(T_m\wedge t,T_{m+1}\wedge t)}\mathfrak{s}^i(U_{s-})\tilde{\gamma}\ dU_s.
%\]
%Then
\begin{align}
\int_{(T_m\wedge t,T_{m+1}\wedge t)}U_{s-}\varepsilon^i\ dX_s&=\int_{(T_m\wedge t,T_{m+1}\wedge t)}U_{s-}\varepsilon^i \ \gamma dX_s \nonumber\\
&=\int_{(T_m\wedge t,T_{m+1}\wedge t)}\left(U_{s-}^{\gamma}a\right) \varepsilon^i \ \gamma dX_s\nonumber\\
&=c^i_j\int_{(T_m\wedge t,T_{m+1}\wedge t)}U_{s-}^{\gamma} \varepsilon^j\ \gamma dX_s\nonumber\\
&=c^i_j\int_{(T_m\wedge t,T_{m+1}\wedge t)}\mathfrak{s}^j(U_{s-}^{\gamma})\ \tilde{\gamma}dU^{\gamma}_s,\label{anti3}
\end{align}
by \eqref{gamma3}, where we write $c_j^i=c_j^i(T_m\wedge t)$, $a=a(T_m\wedge t)$ to simplify the notation. %Let $0=\sigma _0\leq \sigma _1\leq \cdots \leq \sigma_n \leq \cdots$ be a random partition of $[0,\infty)$. Note that
Furthermore, for stopping times $S,T$ with $T_i<S\leq T<T_{i+1}$, it holds that
\begin{align*}
%b^i_j \langle \mathfrak{s}^j(U_{\sigma_n}^{\gamma}), \tilde \gamma (U_{\sigma_n}^{\gamma},U_{\sigma_{n+1}}^{\gamma})\rangle=& b_j^i\langle \mathfrak{s}^j(U_{\sigma_n}^{\gamma}),R_{a^{-1}*}R_{a*}\tilde \gamma (U_{\sigma_n}^{\gamma},U_{\sigma _{n+1}}^{\gamma})\rangle \\
%=& b_j^i \langle R_{a^{-1}}^* \mathfrak{s}^j(U_{\sigma_n}^{\gamma}a),\tilde \gamma (U_{\sigma_n}^{\gamma}a,U_{\sigma_{n+1}}^{\gamma}a)\rangle \\
%=&\langle \mathfrak{s}^i (U_{\sigma_n}^{\gamma}a),\tilde \gamma(U_{\sigma_n}^{\gamma}a,U_{\sigma_{n+1}}^{\gamma}a)\rangle \\
%=&\langle \mathfrak{s}^i (U_{\sigma_n}),\tilde \gamma(U_{\sigma_n},U_{\sigma_{n+1}})\rangle.
c^i_j\langle \mathfrak{s}^j(U_S^{\gamma}), \tilde \gamma (U_S^{\gamma},U_T^{\gamma})\rangle=& c_j^i\langle \mathfrak{s}^j(U_{S}^{\gamma}),R_{a^{-1}*}R_{a*}\tilde \gamma (U_{S}^{\gamma},U_{T}^{\gamma})\rangle \\
=&\langle \mathfrak{s}^i (U_{S}),\tilde \gamma(U_{S},U_{T})\rangle
\end{align*}
by \lref{tildegamma}. Therefore it holds that
\begin{align}
c^i_j \int_{(T_m\wedge t,T_{m+1}\wedge t)}\mathfrak{s}^j(U_{s-}^{\gamma})\ \tilde \gamma dU_s^{\gamma}= \int _{(T_m\wedge t,T_{m+1}\wedge t)}\mathfrak{s}^i(U_{s-})\ dU_s.\label{anti4}
\end{align}
Combining \eqref{anti3} and \eqref{anti4}, we obtain \eqref{anti1} and the assertion follows.
\end{proof}
Combining \tsref{4-1}, \ref{4-2} and \ref{4-3}, we complete the proof of \tref{main} (4).
%%%%%%%%%%%%%%%%%%%%%%%%%%%%%%%%%%%%%%%%%%%%%%%%%%%
\subsection{Proof of \tref{main2}}
We end this section with the proof of \tref{main2} and its example. The idea of the proof of \tref{main2} is to construct a coefficient of SDE from a given connection rule $\gamma$, which was also considered in section 10.2 of \cite{Maillard03}. In the proof of \tref{main2} below, we give a concrete construction of the coefficient of SDE through the orthonormal frame bundle but we first need to construct the constants $\delta_0$ and $\delta >0$ since we do not impose any conditions on the connection rule.\\

First we determine the constants $\delta_0$, $\delta >0$ and the function $h \in C^{\infty}(\mathcal{O}(M)\times B_{\delta}(0); \mathbb{R}^d)$ appearing in \tref{main2}. Let $\gamma$ be an arbitrary connection rule which induces Levi-Civita connection and fix $\gamma^g\in C_g$. By the assumption for $\gamma$ and the compactness of $M$, there exists a neighborhood $\mathcal{U}$ of the diagonal set $\mathrm{diag}(M)$ of $M$ such that
\begin{align}\label{gammaexp}
\gamma(x,y)-\gamma^g(x,y)=O(d(x,y)^3)
\end{align}
on $\mathcal{U}$. Moreover, it is well known that the derivative of the map $TM \ni u\mapsto (\pi_{TM}u,\mathrm{exp}u)\in M \times M$ at the point $0\in T_xM$ equals
\begin{align*}
\left[
\begin{array}{ll}
\mathrm{id}_{T_xM} &\mathrm{id}_{T_xM}\\
\mathrm{id}_{T_xM} &0
\end{array}
\right].
\end{align*}
Thus by \eqref{gammaexp}, for any $x\in M$,
\begin{align*}
\gamma_{*(x,x)}=
\left[
\begin{array}{ll}
\mathrm{id}_{T_xM} &\mathrm{id}_{T_xM}\\
0 &-\mathrm{id}_{T_xM}
\end{array}
\right].
\end{align*}
Thus there exists $r>0$ such that $\gamma$ is a diffeomorphism on $B^M_r(x) \times B^M_r(x)$ into its image, where $B^M_r(x)$ is the geodesic ball on $M$ centered at $x$ with radius $r$. Let $R_x$ be the supremum of such $r>0$. We will show that the function $x \mapsto R_x$ is lower semi-continuous. Let $\{x_n\}_{n\in \mathbb{N}}$ be a sequence converging to $x$. Take $0<\ep < R_x$. Then for any sufficiently large $n$, $B^M_{R_x-\ep}(x_n) \subset B^M_{R_x}(x)$. Thus $\gamma$ is a diffeomorphism on $B^M_{R_x-\ep}(x_n) \times B^M_{R_x-\ep}(x_n)$. This means that $R_x - \ep \leq R_{x_n}$. Since $\ep>0$ is arbitrary, we have $\dis R_x \leq \liminf_{n\to \infty} R_{x_n}$ and this means that $x\mapsto R_x$ is lower semi-continuous. In particular, $R_x$ attains its minimum $R_0>0$ on $M$. Let $r_0$ be the injective radius of $M$. We set $\delta_0:= R_0 \land r_0$. We further set
\[
\delta_x:=\sup \{ \delta>0 \mid B^{T_xM}_{\delta}(0) \subset \gamma_x(B^M_{\delta_0}(x)) \},
\]
where $B^{T_xM}_{\delta}(0)$ is the ball on $T_xM$ centered at the origin with radius $\delta$. Then the map $x \mapsto \delta_x$ can also be shown to be lower semi-continuous as follows. Assume that there exist a point  $x\in M$ and a sequence $\{x_n \}$ such that
\[
\lim_{n\to \infty} x_n = x,\ \liminf_{n\to \infty} \delta_{x_n}< \delta_x.
\]
By taking a proper subsequence, we suppose that  $\dis \lim_{n\to \infty}\delta_{x_n}$ exists and satisfies $\dis \lim_{n\to \infty}\delta_{x_n}<\delta_x$. Let $\dis \ep \in (0,\delta_x-\lim_{n\to \infty}\delta_{x_n})$. Then there exists $N_{\ep}\in \mathbb{N}$ such that $\delta_{x_n}\leq \delta_{x}-\ep$ for any $n\geq N_{\ep}$. Then for each $n$, we can take $v_n \in B_{\delta_{x_n}}^{T_{x_n}M}(0) \cap \gamma_{x_n}(B^M_{\delta_0}(x_n))^c$. We set
\[
U^qM:=\{v\in TM \mid |v|\leq q\}
\]
for $q>0$. Since $U^{\delta_x-\ep}M$ is compact, there exists a subsequence $\{n_k\}_{k\in \mathbb{N}}$ such that $v_{n_k}$ converges to $v \in U^{\delta_x-\ep}_xM$ with respect to a proper metric which is compatible to the topology on $TM$. Since $\gamma_x$ is a diffeomorphism on $B^M_{\delta_0}(x)$, we can take $\ep'>0$ such that $B^{T_xM}_{\delta_x-\ep}(0) \subset \gamma_x(B^M_{\delta_0-\ep'}(x))$. This implies that $\gamma (B^M_{\delta_0-\ep'}(x), B^M_{\delta_0-\ep'}(x))$ is an open neighborhood of $v$ in $TM$. Therefore, there exists $K_1 \in \mathbb{N}$ such that $v_{n_k} \in \gamma (B^M_{\delta_0-\ep'}(x), B^M_{\delta_0-\ep'}(x))$ for all $k \geq K_1$. On the other hand, there exists $K_2\in \mathbb{N}$ such that $B^M_{\delta_0-\ep'}(x) \subset B^M_{\delta_0}(x_{n_k})$ for all $k \geq K_2$. Thus for $k \geq K_1 \lor K_2$, $v_{n_k} \in \gamma_{x_{n_k}}(B^M_{\delta_0}(x_{n_k}))$. This contradicts to the choice of $v_n$. Therefore, we obtain the lower semi-continuity of $\delta_x$ and consequently, $\delta_x$ also attains its minimum on $M$. Let $\dis \delta:=\min_{x\in M}\delta_x>0$. Then $\left(\gamma_x|_{B_{\delta_0}(x)}\right)^{-1}\left(B^{T_xM}_{\delta}(0)\right) \subset B^M_{\delta_0}(x)$ and $\gamma_x$ is a diffeomorphism on $\left(\gamma_x|_{B_{\delta_0}(x)}\right)^{-1}\left(B^{T_xM}_{\delta}(0)\right)$. Moreover, $\exp_x$ is a diffeomorphism on $\exp_x^{-1}\left(\left(\gamma_x|_{B_{\delta_0}(x)}\right)^{-1}\left(B^{T_xM}_{\delta}(0)\right)\right)$ since this set is included in $B^{T_xM}_{\delta_0}(0)$ and $\delta_0 \leq r_0$. Thus we can define
\[
b_x:= \gamma (x, \mathrm{exp}_x (\cdot)) \colon \exp_x^{-1}\left(\left(\gamma_x|_{B_{\delta_0}(x)}\right)^{-1}\left(B^{T_xM}_{\delta}(0)\right)\right) \to B^{T_xM}_{\delta}(0)
\]
and $b_x$ is a diffeomorphism. Define
\[
a(u,z):= b_{\pi u}^{-1}(uz)
\]
for $u\in \mathcal{O}(M)$ and $z\in B_{\delta}(0)$. Then $a$ satisfies
\[
\gamma (\pi u, \mathrm{exp}_{\pi u} a(u,z))=uz
\]
for all $u\in \mathcal{O}(M)$ and $z\in B_{\delta}(0)$ and $a(u,0)=0$. Moreover, if we let $h(u,z)=u^{-1}a(u,z)\in B_{\delta_0}(0)$ for $(u,z)\in \mathcal{O}(M)\times B_{\delta}(0)$, then $h$ is differentiable on $\mathcal{O}(M) \times B_{\delta}(0)$ and satisfies
\[
d_0h(u,\cdot)=\mathrm{id}_{\mathbb{R}^d}.
\]
Next, we show that
\begin{align}
\frac{\partial^2 h(u,\cdot)}{\partial z^i \partial z^j}(0)=0 \label{hess0}
\end{align}
for any $u\in \mathcal{O}(M)$. Fix $x\in M$ and we write $G_1(y):=\gamma (x,y)$ and $G_2(y):=\gamma^g(x,y)$. Then $a=G_2\circ G^{-1}_1$ on $B^{T_xM}_{\delta}(0)$. Let $(y^1,\dots,y^d)$ be a normal coordinate with $y^i(x)=0$ for $i=1,\dots,d$ and $(z^1,\dots,z^d)$ a coordinate on $T_xM$ for an orthonormal basis. Then we regard $G_1$ and $G_2$ as functions of $(y^1,\dots,y^d)$. By Taylor's theorem, we have
\[
G_1(y)-G_2(y)=\frac{1}{2}\left(\frac{\partial^2 G_1}{\partial y^i \partial y^j}(0)-\frac{\partial^2 G_1}{\partial y^i \partial y^j}(0)\right) y^iy^j+\mathrm{o}(|y|^3)
\]
for any $y$ near $x$. Since both $\gamma$ and $\gamma^g$ induce Levi-Civita connection, this implies $\mathrm{Hess}G^i_1(0)=\mathrm{Hess}G^i_2(0)$ for each $i=1,\dots,d$. Therefore, we have
\begin{align*}
\frac{\partial^2 G_2\circ G_1^{-1}}{\partial z^i \partial z^j}(0)=0.
\end{align*}
This immediately yields \eqref{hess0}. We set $\varphi \colon \mathbb{R}^d \times \mathcal{O}(M) \times \mathbb{R}^d \to \mathcal{O}(M)$ by \eqref{coefficient}. Then obviously, the map $\varphi$ is a constraint coefficient from $\mathbb{R}^d \times \mathcal{O}(M) \times \mathbb{R}^d$ to $\mathcal{O}(M)$.
\begin{proof}[Proof of (1) of \tref{main2}]
For a given $\mathcal{F}_0$-measurable random variable $U_0$ and a semimartingale $Z$, we obtain the unique solution $U$ of the SDE \eqref{modifiedSDE}.
%\[
%\overset{\triangle}{d}U=\phi(U, \overset{\triangle}{d}Z).
%\]
We set
\[
\Delta U_t:=h^{\alpha}(U_{t-},\Delta Z_t) L_{\alpha}(U_{t-}).
\]
Then $(\Delta U, U)$ is a horizontal $\Delta$-semimartingale on $\mathcal{O}(M)$. In fact, by \lref{intvec} and \eqref{hess0}, we have
\begin{align*}
\int \theta (U)\circ dU&=\int \langle \theta, L_{\alpha} \rangle (U_-)\circ dZ^{\alpha}
%&\h +\frac{1}{2}\int \frac{\partial^2 h_t^{\alpha}}{\partial z^i \partial z^j}(Z_{s-})\langle \theta, L_{\alpha} \rangle \, d[Z^i,Z^j]^c_s\\
+\sum_{0<s\leq \cdot}R^{\alpha}_s\langle \theta, L_{\alpha} \rangle(U_{s-}) \\
&=0,
\end{align*}
where
\[
R^{\alpha}_t:=h^{\alpha}(U_{t-},\Delta Z_t)-\Delta Z^{\alpha}_t.
\]
Let $X=\pi U$ and $\Delta X=\pi_*\Delta U$. Then $X$ satisfies
\begin{align*}
\gamma(X_{t-},X_t)&=\gamma(\pi U_{t-},\mathrm{exp}_{\pi U_{t-}}h^{\alpha}(U_{t-},\Delta Z_t)U_{t-}\ep_{\alpha})\\
&=\gamma(\pi U_{t-},\mathrm{exp}_{\pi U_{t-}}a(U_{t-},\Delta Z_t))\\
&=U_{t-}\Delta Z_t.
\end{align*}
We set
\[
W_t:=Z_t+\sum_{0<s\leq t}U_{s-}^{-1}(\gamma^g(X_{s-},X_s)-\gamma(X_{s-},X_s)).
\]
Then $\Delta W_t=U_t^{-1}\gamma^g(X_{t-},X_t)$ and it satisfies
\begin{align*}
\exp_{X_{t-}}U_{t-}\Delta W_t=X_t=\exp_{X_{t-}}a(U_{t-},\Delta Z_t).
\end{align*}
Thus we have
\[
U_{t-}\Delta W_t=a(U_{t-},\Delta Z_t).
\]
Therefore, for all $F\in C^{\infty}(\mathcal{O}(M))$, it holds that
\begin{align*}
F(U_t)-F(U_0)&=\int_0^tL_{\alpha}F(U_{s-})\circ dZ^{\alpha}_s\\
&\h + \sum_{0<s\leq t}\{F(\mathrm{Exp}_{U_{s-}}(h^{\alpha}(U_{s-},\Delta Z_s)L_{\alpha}))-F(U_{s-})-L_{\alpha}F(U_{s-})\Delta Z^{\alpha} \}\\
&=\int_0^tL_{\alpha}F(U_{s-})\circ dW^{\alpha}_s\\
&\h + \sum_{0<s\leq t}\{F(\mathrm{Exp}_{U_{s-}}(\Delta W^{\alpha}_tL_{\alpha}))-F(U_{s-})-L_{\alpha}F(U_{s-})\Delta W^{\alpha} \}.
\end{align*}
This implies that $W$ is an anti-development of $X$ with respect to the horizontal lift $(\Delta U,U)$. Therefore, we have
\begin{align*}
\int \phi_-\, \gamma dX &=\int \phi_-\, \gamma^g dX+\sum_{0<s\leq \cdot}\langle \phi_{s-}, \gamma(X_{s-},X_s)-\gamma^g(X_{s-},X_s) \rangle \\
&=\int \langle U_{-}^{-1}\phi_-, dW\rangle+\sum_{0<s\leq \cdot}\langle U_{s-}^{-1}\phi_{s-}, \Delta Z_s-\Delta W_s \rangle \\
&=\int \langle U_{-}^{-1}\phi_-, dZ\rangle
\end{align*}
for any $T^*M$-valued \cl process $\phi$ above $X$.
\end{proof}

\begin{proof}[Proof of (2) of \tref{main2}]
Let $X$ be an $M$-valued semimartingale satisfying
\[
X_t\in \left(\gamma_{X_{t-}}|_{B^M_{\delta_0}(X_{t-})}\right)^{-1}\left(B^{T_{X_{t-}}M}_{\delta}(0)\right)\ \text{for all}\ t\geq 0,\ \Prob \text{-a.s.}
\]
Let $W$ be an anti-development of $(\gamma^g(X_-,X),X)$ and $V$ a horizontal lift of $(\gamma^g(X_-,X),X)$ with an initial value $U_0$. We set
\[
Z_t:=W_t+\sum_{0<s\leq \cdot}V_{s-}^{-1}(\gamma(X_{s-},X_s)-\gamma^g(X_{s-},X_s)).
\]
Then by the assumption for $X$,
\begin{align*}
|\Delta W_t|&=|\gamma^g(X_{t-},X_t)<\delta_0,\\
|\Delta Z_t|&=|\gamma(X_{t-},X_t)|<\delta.
\end{align*}
Thus we have
\begin{align*}
\gamma(X_{s-},\exp_{X_{s-}}a(V_{s-},\Delta Z_s))&=V_{s-}\Delta Z_s\\
&=\gamma (X_{s-},X_s)\\
&=\gamma (X_{s-},\exp_{X_{s-}}V_{s-}\Delta W_s)
\end{align*}
and consequently,
\[
a(V_{s-},\Delta Z_s)=V_{s-}\Delta W_s.
\]
Therefore, $V$ satisfies
\begin{align*}
F(V_t)-F(V_0)&=\int_0^t L_{\alpha}F(V_{s-})\circ dW^{\alpha}_s\\
&\h +\sum_{0<s\leq t}\{F(\mathrm{Exp}_{V_{s-}}(\Delta W^{\alpha}_sL_{\alpha}))-F(V_{s-})-L_{\alpha}F(V_{s-})\Delta W^{\alpha}_s\}\\
&=\int_0^t L_{\alpha}F(V_{s-})\circ dZ^{\alpha}_s\\
&\h + \sum_{0<s\leq t}\{F(\mathrm{Exp}_{V_{s-}}(h^{\alpha}(V_{s-},\Delta Z_s)L_{\alpha}))-F(V_{s-})-L_{\alpha}F(V_{s-})\Delta Z^{\alpha}_s \}
\end{align*}
for all $F\in C^{\infty}(\mathcal{O}(M))$. This implies that $V$ solves \eqref{modifiedSDE} and we have $V=U$ by the uniqueness of the solution of the SDE. Therefore, $(U_-h(U_-,\Delta Z),U)$ is the horizontal lift of $X$ with the initial value $U_0$. In particular, the semimartingale $X$ satisfies \eqref{intconne} by the claim of (1).
\end{proof}

\begin{ex}\label{spmar}
We consider the case
\[
M=\mathbb{S}^d:=\{ (x^1,\dots,x^{d+1})\in \mathbb{R}^{d+1} \mid (x^1)^2+\cdots +(x^{d+1})^2=1\}.
\]
Let $\gamma$ be a connection rule on $\mathbb{S}^d$ given by
\[
\gamma (x,y):=\Pi_x(y-x),
\]
where $\Pi_x \colon \mathbb{R}^{d+1}\to T_x\mathbb{S}^d$ is the orthonormal projection. Then it holds that
\[
\gamma(x,y)=\frac{\sin d(x,y)}{d(x,y)}\gamma^g(x,y).
\]
Therefore, we can easily check that $\delta_0=\frac{\pi}{2}$, $\delta =1$ and
\[
a(u,z)= \frac{\arcsin |z|}{|z|} uz.
\]
We define $g \colon [0,1]\to \mathbb{R}$ by
\begin{align*}
g(\theta)=\left\{
\begin{array}{ll} \frac{\arcsin \theta-\theta}{\theta},\ &\theta \neq 0,\\
0,\ &\theta = 0.
\end{array}
\right.
\end{align*}
Then for a local martingale $Z$ on $\mathbb{R}^d$ with $Z_0$ and $\dis \sup_{0\leq t<\infty}|\Delta Z_t|<1$, if we set
\[
W=Z+\sum_{0<s\leq \cdot}g(|\Delta Z_s|)\Delta Z_s,
\]
then the development of $W$ is an $\mathbb{S}^d$-valued $\gamma$-martingale.
\end{ex}
%%%%%%%%%%%%%%%%%%%%%%%%%%%%%%%%%%
\section{Appendix: The Riemannian metric and the Levi-Civita connection on $\mathcal{O}(M)$}\label{bundlemetric}
In order to define the stochastic integrals on $\mathcal{O}(M)$, we introduce the Riemannian metric and the Levi-Civita connection on $\mathcal{O}(M)$. First we recall the notion of orthonormal frame bundles. Fundamental properties of orthonormal frame bundles mentioned in this section are based on \cite{KN}. Let $(M,g)$ be a Riemannian manifold. Set
\begin{align*}
\mathcal{O}_x(M)&:=\{ u:\mathbb{R}^d\to T_xM\mid u\text{ is a linear isometric}\},\\
\mathcal{O}(M)&:=\bigsqcup_{x\in M}\mathcal{O}_x(M),\\
\pi&: \mathcal{O}(M)\to M,\ \pi(u)=x,\ u\in \mathcal{O}_x(M).
\end{align*}
Let $O(d)$ be an orthogonal group and $\mathfrak{o}(d)$ its Lie algebra. $O(d)$ acts on $\mathcal{O}(M)$ and the action is defined by
\[
ua:=u\circ a\in \mathcal{O}_x(M),\ x\in M,\ u\in \mathcal{O}_x(M),\ a\in O(d).
\]
We define a map $R_a:\mathcal{O}(M)\to \mathcal{O}(M)$ by
\[
R_au:=ua,\ u\in \mathcal{O}(M)
\]
for $a\in O(d)$. It is well known that $\mathcal{O}(M)$ is a differentiable manifold and  $\pi :\mathcal{O}(M)\to M$ is a principal $O(d)$-bundle. For each $u\in \mathcal{O}(M)$, $\ker \pi_{*u}$ is called the vertical subspace of $T_u\mathcal{O}(M)$ and each vector in $\ker \pi_{*u}$ is said to be vertical. For $X\in \mathfrak{o}(d)$, the matrix exponential $\exp tX$ determines  a one parameter subgroup of $O(d)$. The vertical vector field $X^{\sharp}$ is defined by
\begin{align}\label{verticalvec}
X^{\sharp}(u)=\left( \frac{d}{dt}\right)_{t=0}u\exp tX,\ u\in \mathcal{O}(M).
\end{align}
Fix an orthonormal basis $\{X_{\alpha} \}_{\alpha =1,\dots, \frac{d(d-1)}{2}}$ of $\mathfrak{o}(d)$ with respect to the standard inner product. Then $\{X_{\alpha}^{\sharp}(u) \}_{\alpha =1,\dots, \frac{d(d-1)}{2}}$ is the basis of $\ker \pi_{*u}$ for each $u\in \mathcal{O}(M)$.\\
\indent By pulling back to $\mathcal{O}(M)$, a $(p,q)$-tensor $T$ can be seen as an $(\mathbb{R}^d)^p\otimes ((\mathbb{R}^{d})^*)^q$-valued function. The pullback of $T$ is defined by
\[
\pi^*T(u)=u^{-1}T_{\pi u}.
\]
This is called the scalarization of $T$. The tensor transformation rule can be described as
\[
\pi^*T(ua)=a^{-1}\pi^*T(u),\ u\in \mathcal{O}(M),\ a\in O(d).
\]
Let $\theta$ be a connection form on $\mathcal{O}(M)$, that is, $\theta$ is an $\mathfrak{o}(d)$-valued 1-form satisfying the following:
\begin{itemize}
\item[(1)] for all $X\in \mathfrak{o}(d)$, $\langle \theta,X^{\sharp}\rangle=X$,
\item[(2)] for all $g\in O(d)$, $R_g^*\theta=Ad(g^{-1})\circ \theta$,
\end{itemize}
where $R_g^*\theta$ is the pull-back of $\theta$ by $R_g$ and $Ad:O(d)\to \text{End}(\mathfrak{o}(d))$ is the adjoint representation. For a connection form $\theta$, set
\[
H_u=\{ A\in T_u\mathcal{O}(M)\mid \langle \theta,A\rangle=0\},\ u\in \mathcal{O}(M).
\]
This is called the horizontal subspace of $T_u\mathcal{O}(M)$. The restriction of $\pi_{*u}$ to $H_u$ denoted by $\pi_{*\mid H_u}:H_u\to T_{\pi u}M$ is a linear isomorphism for each $u\in \mathcal{O}(M)$ and for $X\in T_{\pi u}M$, the vector
\[
\tilde X=(\pi_{*\mid H_u})^{-1}(X)
\]
is called the horizontal lift of $X$. It is well known that the horizontal lift of a $C^{\infty}$-vector field on $M$ is a $C^{\infty}$-vector field on $\mathcal{O}(M)$. A connection form $\theta$ induces a connection on $M$. In fact,
\[
\nabla_X Y(\pi u)=u(\tilde X\pi^*Y(u)),\ u\in \mathcal{O}(M),
\]
determines a connection on $M$. We can observe that a covariant derivative of a vector field on $M$ can be seen as a derivative of a vector-valued function on $\mathcal{O}(M)$ through scalarization.
\begin{dfn}
The solder form $\mathfrak{s}\in \Omega^1 (\mathcal{O}(M);\mathbb{R}^d)$ is defined by
\[
\mathfrak{s}_u(A)=u^{-1}\pi_*A,\ u\in \mathcal{O}(M),\ A\in T_u\mathcal{O}(M).
\]
\end{dfn}
\begin{dfn}
The horizontal vector fields on $\mathcal{O}(M)$ defined by
\[
L_k(u)=(\pi_{*\mid H_u})^{-1}(u\varepsilon_k)\ (k=1,\dots,d)
\]
are called canonical horizontal vector fields.
\end{dfn}
Fix orthonormal bases of $\mathbb{R}^d$ and $\mathfrak{o}(d)$ and denote them by $\{ \varepsilon_i\}$ and $\{ X_{\alpha} \}$, respectively. Then $\theta$ and $\mathfrak{s}$ can be written as
\begin{gather*}
\theta=\theta^{\alpha}X_{\alpha},\ \mathfrak{s}=\mathfrak{s}^k\varepsilon_k.
\end{gather*}
The set of vector fields $\{ L_k, X_{\alpha}^{\sharp} \}^{k=1,\dots,d}_{\alpha =1,\dots, \frac{d(d-1)}{2}}$ is a basis of each tangent space on $\mathcal{O}(M)$ and $\{ \mathfrak{s}^k,\theta^{\alpha}\}$ is its dual basis.
%\begin{dfn}
%A $k$-tensor field $T$ on $\mathcal{O}(M)$ is said to be horizontal if for all $u\in \mathcal{O}(M)$ and $A_1,%\dots,A_k\in T_u\mathcal{O}(M)$ such that $A_i$ is vertical for some $i$,
%\[
%T_u(A_i,\dots,A_k)=0.
%\]
%We define a vertical tensor field on $\mathcal{O}(M)$ in the same way.
%\end{dfn}
%For example, a connection form on $\mathcal{O}(M)$ is vertical and a solder form on $\mathcal{O}(M)$ is %horizontal.
\begin{dfn}\label{defmetric}
Let $\nabla$ be a connection on $M$, and $\theta$ a connection form which corresponds to $\nabla$. Let $\mathfrak{s}$ be a solder form. Define the Riemannian metric $\tilde g$ on $\mathcal{O}(M)$ by
\[
\tilde g=\sum_{\alpha}\theta^{\alpha}\otimes \theta^{\alpha}+\sum_{k}\mathfrak{s}^k\otimes \mathfrak{s}^k.
\]
Denote the Levi-Civita connection corresponding to $\tilde g$ by $\tilde \nabla$.
\end{dfn}
The Riemannian metric $\tilde g$, the Levi-Civita connection $\tilde \nabla$ and geodesics on $\mathcal{O}(M)$ are considered in \cite{PE}. Covariant derivatives with respect to connections on soldered principal fiber bundles are computed in \cite{BB} under a more general situation. Set
\begin{gather*}
\tilde \nabla L_j=\sum_{\beta}\omega_j^{\beta}\ X^{\sharp}_{\beta}+\sum_{i}\omega_j^i\ L_i,\\
\tilde \nabla X^{\sharp}_{\alpha}=\sum_{\beta} \omega_{\alpha}^{\beta}\ X^{\sharp}_{\beta}+\sum_i\omega_{\alpha}^i\ L_i.
\end{gather*}
Since $\tilde \nabla$ is torsion-free, it holds that
\[
\omega_j^{\alpha}=-\omega_{\alpha}^j.
\]
We can write
\begin{gather*}
\omega_{\beta}^{\alpha}=\sum_{\gamma}\omega_{\beta \gamma}^{\alpha}\ \theta^{\gamma}+\sum_k\omega_{\beta k}^{\alpha}\ \mathfrak{s}^k,\\
\omega_j^{\alpha}=\sum_{\gamma}\omega_{j \gamma}^{\alpha}\ \theta^{\gamma}+\sum_k\omega_{j k}^{\alpha}\ \mathfrak{s}^k,\\
\omega_j^i=\sum_{\gamma}\omega_{j \gamma}^i\ \theta^{\gamma}+\sum_k\omega_{jk}^i\ \mathfrak{s}^k.
\end{gather*}
In view of calculations in \cite[p.\ 897]{BB}, it holds that
\begin{align}
\omega_{\beta}^{\alpha}&=\frac{1}{2}\sum_{\gamma}c_{\beta \gamma}^{\alpha}\ \theta^{\gamma},\nonumber \\
\omega_j^{\alpha}&=-\frac{1}{2}\sum_k\Omega_{j k}^{\alpha}\ \mathfrak{s}^k,\nonumber \\
\omega_j^i&=\sum_{\gamma}\{ (X_{\gamma}^{\sharp})^{ij}-\frac{1}{2}\Omega_{ij}^{\gamma} \} \theta^{\gamma}\label{co},
\end{align}
where
\[
c_{\alpha \beta}^{\gamma},\ \alpha,\beta,\gamma=1,\dots, \frac{d(d-1)}{2}
\]
are the structure constants with respect to an orthonormal basis $\{X_{\alpha} \}_{\alpha =1,\dots, \frac{d(d-1)}{2}}$ of $\mathfrak{o}(d)$ defined through
\[
[X_{\alpha},X_{\beta}]_{\mathfrak{o}(d)}=c_{\alpha \beta}^{\gamma}\ X_{\gamma},
\]
and
\[
\Omega_{ij}^{\alpha},\ i,j=1,\dots,d,\ \alpha=1,\dots,\frac{d(d-1)}{2}
\]
are the components of the curvature form $\Omega^{\theta}$ defined through
\begin{align*}
\Omega^{\theta}=&d\theta +\frac{1}{2}[\theta,\theta]\\
=&\frac{1}{2}\sum_{i,j,\alpha}\Omega^{\alpha}_{i j}\ X_{\alpha}^{\sharp}\ \mathfrak{s}^i\wedge \mathfrak{s}^j\ (\Omega^{\alpha}_{ij}=-\Omega^{\alpha}_{ji}).
\end{align*}
Since the standard inner product of $\mathfrak{o}(d)$ is $O(d)$-invariant, $\{c_{\alpha \beta}^{\gamma} \}$ is totally anti-symmetric in $\alpha, \beta, \gamma$. The following \psref{covariant} and \ref{integralcurve} can be easily obtained by \eqref{co}.
\begin{prop}\label{covariant}
For any $u\in \mathcal{O}(M)$ and $A\in T_u\mathcal{O}(M)$, it holds that
\begin{gather*}
\tilde \nabla \theta (A,A)=0.
\end{gather*}
Furthermore, if $A$ is horizontal, then
\begin{gather*}
\tilde \nabla \mathfrak{s}(A,A)=0.
\end{gather*}
\end{prop}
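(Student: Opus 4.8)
The plan is to read $\tilde\nabla\theta$ and $\tilde\nabla\mathfrak{s}$ directly off the connection coefficients \eqref{co} in the global frame $\{L_k,X_\alpha^{\sharp}\}$ with dual coframe $\{\mathfrak{s}^k,\theta^{\alpha}\}$. Since this frame trivializes $T\mathcal{O}(M)$, it suffices to handle the scalar components: I will show that $(\tilde\nabla\theta^{\alpha})(A,A)=0$ for every $\alpha$ and every $A\in T_u\mathcal{O}(M)$, and that $(\tilde\nabla\mathfrak{s}^i)(A,A)=0$ for every $i$ whenever $A$ is horizontal, and then reassemble $\tilde\nabla\theta(A,A)=\sum_{\alpha}(\tilde\nabla\theta^{\alpha})(A,A)X_{\alpha}$ and $\tilde\nabla\mathfrak{s}(A,A)=\sum_i(\tilde\nabla\mathfrak{s}^i)(A,A)\varepsilon_i$.

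First I would pass from the frame coefficients to the coframe ones: from $\tilde\nabla e_I=\sum_J\omega_I^J\,e_J$ one gets $\tilde\nabla e^J=-\sum_I\omega_I^J\otimes e^I$ (the derivative direction being the first slot), so that
\[
\tilde\nabla\theta^{\alpha}=-\sum_{\beta}\omega_{\beta}^{\alpha}\otimes\theta^{\beta}-\sum_j\omega_j^{\alpha}\otimes\mathfrak{s}^j,\qquad \tilde\nabla\mathfrak{s}^i=-\sum_j\omega_j^i\otimes\mathfrak{s}^j-\sum_{\alpha}\omega_{\alpha}^i\otimes\theta^{\alpha},
\]
where $\omega_{\alpha}^j=-\omega_j^{\alpha}$ by torsion-freeness. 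Writing $A=\sum_j b^jL_j(u)+\sum_{\alpha}a^{\alpha}X_{\alpha}^{\sharp}(u)$, i.e. $b^j=\mathfrak{s}^j(A)$ and $a^{\alpha}=\theta^{\alpha}(A)$, and inserting $\omega_{\beta}^{\alpha}=\tfrac12\sum_{\gamma}c_{\beta\gamma}^{\alpha}\theta^{\gamma}$ and $\omega_j^{\alpha}=-\tfrac12\sum_k\Omega_{jk}^{\alpha}\mathfrak{s}^k$ from \eqref{co}, I obtain
\[
(\tilde\nabla\theta^{\alpha})(A,A)=-\tfrac12\sum_{\beta,\gamma}c_{\beta\gamma}^{\alpha}a^{\beta}a^{\gamma}+\tfrac12\sum_{j,k}\Omega_{jk}^{\alpha}b^jb^k=0,
\]
both sums vanishing because $\{c_{\beta\gamma}^{\alpha}\}$ is (totally) antisymmetric and $\Omega_{jk}^{\alpha}=-\Omega_{kj}^{\alpha}$, as recorded just before \eqref{co}. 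Hence $\tilde\nabla\theta(A,A)=0$ with no hypothesis on $A$.

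For the second assertion, suppose $A$ is horizontal, so $a^{\alpha}=\theta^{\alpha}(A)=0$ for all $\alpha$. In $(\tilde\nabla\mathfrak{s}^i)(A,A)=-\sum_j\omega_j^i(A)\,b^j-\sum_{\alpha}\omega_{\alpha}^i(A)\,a^{\alpha}$ the second sum is already $0$, and by \eqref{co} each $\omega_j^i$ is a linear combination of the forms $\theta^{\gamma}$, so $\omega_j^i(A)$ is a combination of the $a^{\gamma}=0$; thus every term vanishes and $\tilde\nabla\mathfrak{s}(A,A)=0$. I do not anticipate a genuine obstacle here: the only care needed is the sign and index bookkeeping when converting the frame coefficients \eqref{co} into the coframe ones and confirming that the antisymmetries invoked are exactly those stated above.
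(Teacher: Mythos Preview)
Your proof is correct and follows essentially the same approach as the paper: both compute the components of $\tilde\nabla\theta^{\alpha}$ and $\tilde\nabla\mathfrak{s}^i$ from the connection coefficients \eqref{co} and conclude by the antisymmetry of $c_{\beta\gamma}^{\alpha}$ in $\beta,\gamma$ and of $\Omega_{jk}^{\alpha}$ in $j,k$. The only cosmetic difference is that you package the computation via the coframe identity $\tilde\nabla e^J=-\sum_I\omega_I^J\otimes e^I$, whereas the paper evaluates $\tilde\nabla\theta^{\alpha}$ on each pair of basis vectors $(L_k,L_l)$, $(L_k,X_\gamma^{\sharp})$, $(X_\beta^{\sharp},L_l)$, $(X_\beta^{\sharp},X_\gamma^{\sharp})$ separately; the content is identical.
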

\begin{proof}
Any tangent vector $A$ can be denoted by
\[
A=a^iL_i(u)+b^{\alpha}X_{\alpha}^{\sharp},\ a^i,b^{\alpha}\in \mathbb{R},\ i=1,\dots,d,\ \alpha=1,\dots,\frac{d(d-1)}{2}.
\]
Therefore we can write
\begin{align*}
\tilde \nabla \theta^{\alpha} (A,A)=&a^ka^l(\tilde \nabla \theta^{\alpha})(L_k,L_l)+a^kb^{\gamma}\tilde \nabla \theta^{\alpha}(L_k,X_{\gamma}^{\sharp})\\
&+b^{\beta}a^l\tilde \nabla \theta^{\alpha}(X_{\beta}^{\sharp},L_l)+b^{\beta}b^{\gamma}(\tilde \nabla \theta^{\alpha})(X_{\beta}^{\sharp},X_{\gamma}^{\sharp}).
\end{align*}
By using \eqref{co}, it holds that
\begin{align*}
%\tilde \nabla \theta^{\alpha} (A,A)=
&a^ka^l(\tilde \nabla \theta^{\alpha})(L_k,L_l)=a^ka^l\langle \omega^{\alpha}_l,L_k \rangle
%=& a^ka^l (L_k \langle \theta^{\alpha},L_l\rangle -\langle \theta^{\alpha}, \tilde \nabla_{L_k}L_l\rangle )\\
%=& -a^ka^l \langle \theta^{\alpha}, \sum_{\beta} \omega_{l k}^{\beta}\ X^{\sharp}_{\beta}+\sum_i\omega_{l k}^i\  L_i\rangle\\
%=&-a^k a^l \omega _{lk}^{\alpha}\\
= \frac{a^ka^l}{2} \Omega_{l k}^{\alpha},\\
&\tilde \nabla \theta^{\alpha}(L_k,X_{\gamma}^{\sharp})=-\langle \omega_{\gamma}^{\alpha},L_k\rangle=0,\\
&\tilde \nabla \theta^{\alpha}(X_{\beta}^{\sharp},L_l)=-\langle \omega_l^{\alpha},X_{\beta}^{\sharp}\rangle=0,\\
&b^{\beta}b^{\gamma}(\tilde \nabla \theta^{\alpha})(X_{\beta}^{\sharp},X_{\gamma}^{\sharp})=b^{\beta}b^{\gamma}\langle \omega_{\gamma}^{\alpha},X_{\beta}^{\sharp} \rangle
=-b^{\beta}b^{\gamma}c^{\alpha}_{\beta \gamma}.
\end{align*}
Since $\Omega _{lk}^{\alpha}=-\Omega_{kl}^{\alpha}$, we obtain
\[
a^ka^l(\tilde \nabla \theta^{\alpha})(L_k,L_l)=0.
\]
Similarly, we obtain
\[
b^{\beta}b^{\gamma}(\tilde \nabla \theta^{\alpha})(X_{\beta}^{\sharp},X_{\gamma}^{\sharp})=0.
\]
Therefore we deduce $\tilde \nabla \theta (A,A)=0.$ Next suppose $A$ is horizontal. Then we obtain
\begin{align*}
\tilde \nabla \mathfrak{s}^j(A,A)=&a^ka^l \tilde \nabla \mathfrak{s}^j (L_k,L_l)
%=&a^ka^l (L_k \langle \mathfrak{s}^j,L_l\rangle-\langle \mathfrak{s}^j,\tilde \nabla _{L_k}L_l\rangle)\\
=-a^ka^l\langle \omega_l^j\  L_k \rangle
=0.
\end{align*}
This proves the proposition.
\end{proof}
\begin{prop}\label{integralcurve}
\begin{itemize}
\item[(1)]Integral curves of the horizontal vector field $a^kL_k$, $a_k\in \mathbb{R}^k,\ k=1,\dots,d$, are geodesics with respect to $\tilde \nabla$. 
\item[(2)]Integral curves of the vertical vector field $b^{\alpha}X_{\alpha}^{\sharp}$, $b^{\alpha}\in \mathbb{R}^d,\ \alpha=1,\dots,\frac{d(d-1)}{2}$, are geodesics with respect to $\tilde \nabla$.
\end{itemize}
\end{prop}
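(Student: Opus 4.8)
The plan is to use the elementary characterization that, for an affine connection $\tilde\nabla$, a curve $c$ is a $\tilde\nabla$-geodesic if and only if $\tilde\nabla_{\dot c}\dot c=0$, together with the fact that when $c$ is an integral curve of a vector field $V$, so that $\dot c=V\circ c$, one has $\tilde\nabla_{\dot c}\dot c=(\tilde\nabla_VV)\circ c$. Both assertions then reduce to showing $\tilde\nabla_VV=0$: for $V=a^kL_k$ with constant $a^k$ in (1), and for $V=b^\alpha X_\alpha^\sharp$ with constant $b^\alpha$ in (2). Since the coefficients are constant, this unwinds to $\tilde\nabla_VV=a^ka^j\,\tilde\nabla_{L_k}L_j$ in the first case and $\tilde\nabla_VV=b^\alpha b^\beta\,\tilde\nabla_{X_\beta^\sharp}X_\alpha^\sharp$ in the second, so it only remains to read off these covariant derivatives from \eqref{co} and to notice that each resulting expression is the contraction of a symmetric tensor against an antisymmetric one.

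For (1) I would write $\tilde\nabla L_j=\omega_j^\beta X_\beta^\sharp+\omega_j^iL_i$, giving $\tilde\nabla_{L_k}L_j=\langle\omega_j^\beta,L_k\rangle X_\beta^\sharp+\langle\omega_j^i,L_k\rangle L_i$; by \eqref{co} the form $\omega_j^i$ is a linear combination of the $\theta^\gamma$ and hence vanishes on $L_k$, while $\omega_j^\beta=-\tfrac12\sum_m\Omega_{jm}^\beta\mathfrak{s}^m$ gives $\langle\omega_j^\beta,L_k\rangle=-\tfrac12\Omega_{jk}^\beta$. Thus $\tilde\nabla_VV=-\tfrac12\,a^ka^j\Omega_{jk}^\beta\,X_\beta^\sharp=0$, because $\Omega_{jk}^\beta=-\Omega_{kj}^\beta$ while $a^ka^j$ is symmetric in $j,k$. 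For (2), similarly $\tilde\nabla_{X_\beta^\sharp}X_\alpha^\sharp=\langle\omega_\alpha^\gamma,X_\beta^\sharp\rangle X_\gamma^\sharp+\langle\omega_\alpha^i,X_\beta^\sharp\rangle L_i$; the torsion-free relation $\omega_\alpha^i=-\omega_i^\alpha=\tfrac12\sum_m\Omega_{im}^\alpha\mathfrak{s}^m$ shows this last form vanishes on $X_\beta^\sharp$, and $\omega_\alpha^\gamma=\tfrac12\sum_\delta c_{\alpha\delta}^\gamma\theta^\delta$ gives $\langle\omega_\alpha^\gamma,X_\beta^\sharp\rangle=\tfrac12c_{\alpha\beta}^\gamma$, so $\tilde\nabla_VV=\tfrac12\,b^\alpha b^\beta c_{\alpha\beta}^\gamma\,X_\gamma^\sharp=0$ by the antisymmetry $c_{\alpha\beta}^\gamma=-c_{\beta\alpha}^\gamma$ of the structure constants.

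The computation is routine once \eqref{co} is available, so there is no genuine obstacle, only two points needing care. The first is the standard reduction from ``an integral curve of $V$ is a geodesic'' to ``$\tilde\nabla_VV=0$'', which is valid because $\dot c=V\circ c$. The second is the index bookkeeping: one must keep straight, in each case, the pair of indices in which the relevant connection coefficient is antisymmetric, so that contraction against the symmetric tensor $a^ka^j$ (respectively $b^\alpha b^\beta$) really does annihilate the term. As a sanity check, the integral curves in (2) are exactly the curves $t\mapsto u\exp(t\sum_\alpha b^\alpha X_\alpha)$, orbits of one-parameter subgroups of $O(d)$, which is just what one expects for geodesics in the vertical directions of a bundle metric built from an $Ad$-invariant inner product on $\mathfrak{o}(d)$.
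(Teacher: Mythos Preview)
Your proposal is correct and follows essentially the same approach as the paper's proof: both reduce the geodesic equation to $\tilde\nabla_VV=0$ for the relevant constant-coefficient vector field, read off the covariant derivatives from \eqref{co}, and conclude by contracting the antisymmetric coefficients $\Omega^\alpha_{jk}$ and $c^\gamma_{\alpha\beta}$ against the symmetric tensors $a^ka^j$ and $b^\alpha b^\beta$. Your write-up is in fact slightly more explicit than the paper's in checking that the $L_i$-components of $\tilde\nabla_{L_k}L_j$ and $\tilde\nabla_{X_\beta^\sharp}X_\alpha^\sharp$ vanish, but the argument is otherwise identical.
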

\begin{proof}
Let $u(t)$ be a curve on $\mathcal{O}(M)$ satisfying
\[
\frac{du}{dt}(t)=a^kL_k(u(t)).
\]
Then it holds that
\begin{align*}
\tilde \nabla_{\frac{d}{dt}}\frac{du}{dt}={}& a^ka^l(\tilde \nabla_{L_l}L_k)(u(t))\\
={}&a^ka^l(\omega_{kl}^{\alpha}X_{\alpha}^{\sharp})\\
={}&-\frac{1}{2}(a^ka^l\Omega_{k l}^{\alpha})X_{\alpha}^{\sharp}\\
={}&0.
\end{align*}
Next let us denote the integral curve of $b^{\alpha}X_{\alpha}^{\sharp}$ by $v(t)$. Then
\begin{align*}
\tilde \nabla_{\frac{d}{dt}}\frac{dv}{dt}={}&b^{\beta}b^{\gamma}(\tilde \nabla_{X_{\beta}^{\sharp}}X_{\gamma}^{\sharp})(v(t))\\
={}&\frac{b^{\beta}b^{\gamma}}{2}c_{\beta \gamma}^{\lambda}\ X_{\lambda}^{\sharp}(v(t))\\
={}&0.
\end{align*}
This completes the proof.
\end{proof}
The next proposition proved in \cite{PE} expresses the relation between geodesics on $\mathcal{O}(M)$ and those on $M$.
\begin{prop}[\cite{PE}, Proposition 1.9]\label{geodesic}
\begin{itemize}
\item[(1)]Let $x$ and $y$ be two points in $M$ and $c$ a minimal geodesic from $x$ to $y$. Let us denote the parallel transport along $c$ by $P_c:T_xM\to T_yM$. Let $u\in \mathcal{O}_x(M)$, $v\in \mathcal{O}_y(M)$ with $v=P_c\circ u$. Then minimal geodesics from $u$ to $v$ with respect to $\tilde g$ are horizontal and the horizontal lift of $c$ starting at $u$ is one of minimal geodesics from $u$ to $v$. Furthermore, if minimal geodesics from $x$ to $y$ on $M$ are unique, then minimal geodesics from $u$ to $v$ are also unique.
\item[(2)] Let $\tau$ be a geodesic on $\mathcal{O}(M)$ with respect to $\tilde g$. Suppose that $\tau'(0)$ is horizontal. Then $\tau$ is a horizontal curve and $\pi \circ \tau$ is a geodesic on $M$.
\end{itemize}
\end{prop}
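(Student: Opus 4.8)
The plan is to prove part~(2) first and then use it for part~(1). The single structural fact that drives everything is that $\tilde g$ turns $\pi:\mathcal O(M)\to M$ into a Riemannian submersion whose horizontal distribution is exactly $H=\ker\theta$: for $A\in T_u\mathcal O(M)$ one has $\tilde g(A,A)=\sum_\alpha\theta^\alpha(A)^2+\sum_k\mathfrak s^k(A)^2$, and since $u$ is a linear isometry, $\sum_k\mathfrak s^k(A)^2=|u^{-1}\pi_*A|^2_{\mathbb R^d}=|\pi_*A|^2_g$; hence $\tilde g(A,A)=|\theta(A)|^2+|\pi_*A|^2_g\ge|\pi_*A|^2_g$, with equality if and only if $A$ is horizontal. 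Consequently every curve $\sigma$ in $\mathcal O(M)$ satisfies $L_{\tilde g}(\sigma)\ge L_g(\pi\circ\sigma)$, with equality forcing $\sigma$ to be horizontal, and a horizontal curve has the same $\tilde g$-length as the $g$-length of its projection.

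For part~(2): let $\tau$ be a $\tilde g$-geodesic with $\tau'(0)$ horizontal, so $\tilde\nabla_{\dot\tau}\dot\tau=0$. For each $\alpha$ we get $\tfrac{d}{dt}\theta^\alpha(\dot\tau)=(\tilde\nabla_{\dot\tau}\theta^\alpha)(\dot\tau)+\theta^\alpha(\tilde\nabla_{\dot\tau}\dot\tau)=\tilde\nabla\theta^\alpha(\dot\tau,\dot\tau)=0$ by \pref{covariant}; thus $\theta(\dot\tau)$ is constant, and being $0$ at $t=0$ it vanishes identically, so $\tau$ is horizontal. Now apply the second statement of \pref{covariant} along the horizontal curve $\tau$: $\tfrac{d}{dt}\mathfrak s^k(\dot\tau)=\tilde\nabla\mathfrak s^k(\dot\tau,\dot\tau)=0$, so $\mathfrak s(\dot\tau)\equiv a\in\mathbb R^d$ is constant, i.e.\ $\pi_*\dot\tau=\tau(t)a$ and $\dot\tau=a^kL_k(\tau)$. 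Then, writing $x=\pi\circ\tau$ (whose horizontal lift is $\tau$), the scalarization of the velocity field of $x$ along $\tau$ is the constant $a$, hence the scalarization of $\nabla_{\dot x}\dot x$ is $\tfrac{d}{dt}a=0$, so $x$ is a $g$-geodesic. (This uses the standard identification of covariant differentiation along a curve with ordinary differentiation of the scalarization along the horizontal lift; equivalently one may quote that integral curves of the canonical horizontal fields project to geodesics.)

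For part~(1): let $\tilde c$ be the horizontal lift of the minimal geodesic $c$ starting at $u$. Horizontal lifting of curves is parallel transport of frames, so $\tilde c(1)=P_c\circ u=v$. Since $c$ is a $g$-geodesic, the scalarization of $\dot c$ along $\tilde c$ is a constant $a\in\mathbb R^d$, hence $\dot{\tilde c}=a^kL_k(\tilde c)$, and by \pref{integralcurve}(1) $\tilde c$ is a $\tilde g$-geodesic. Because $\tilde c$ is horizontal, $L_{\tilde g}(\tilde c)=L_g(c)=d(x,y)$. Combined with $L_{\tilde g}(\sigma)\ge L_g(\pi\circ\sigma)\ge d(x,y)$ for every curve $\sigma$ from $u$ to $v$, we get that the $\tilde g$-distance $\tilde d(u,v)$ equals $d(x,y)$ and is realized by $\tilde c$; hence $\tilde c$ is a minimal $\tilde g$-geodesic from $u$ to $v$.

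Finally, if $\sigma$ is any minimal $\tilde g$-geodesic from $u$ to $v$ (parametrized on $[0,1]$ with constant speed, as are $c$ and $\tilde c$), then $L_{\tilde g}(\sigma)=d(x,y)=L_g(\pi\circ\sigma)$ forces equality in the submersion inequality, so $\theta(\dot\sigma)\equiv0$ and $\sigma$ is horizontal; by part~(2), $\pi\circ\sigma$ is a $g$-geodesic, and its length being $d(x,y)$ it is a minimal geodesic from $x$ to $y$. If the minimal geodesic from $x$ to $y$ is unique it must be $c$, so $\sigma$ is a horizontal lift of $c$; since horizontal lifts are determined by their initial frame and $\sigma(0)=u$, we conclude $\sigma=\tilde c$, which gives the asserted uniqueness. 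I expect the main obstacle to be precisely this passage — showing that an arbitrary minimal $\tilde g$-geodesic between frames above $x,y$ must be horizontal — together with the bookkeeping of parametrizations needed to conclude that the projection is a \emph{minimal} geodesic and not merely a geodesic; the rest is the length/submersion comparison and the two applications of \pref{covariant}.
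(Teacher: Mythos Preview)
The paper does not supply its own proof of this proposition; it is quoted from \cite{PE} without argument. So there is nothing in the present paper to compare your attempt against.

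That said, your proof is correct and is in fact the standard argument. The two essential ingredients are exactly the ones the paper has set up for you: the identity $\tilde g(A,A)=|\theta(A)|^2_{\mathfrak o(d)}+|\pi_*A|^2_g$, which makes $\pi$ a Riemannian submersion with horizontal space $H=\ker\theta$, and \pref{covariant}, which shows that $\theta(\dot\tau)$ is conserved along any $\tilde g$-geodesic and that $\mathfrak s(\dot\tau)$ is conserved once $\dot\tau$ is horizontal. Your derivation of part~(2) from these two facts is clean, and your use of the length comparison $L_{\tilde g}(\sigma)\ge L_g(\pi\circ\sigma)$ to pin down both minimality and horizontality in part~(1) is the right mechanism. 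The parametrization bookkeeping you flag at the end is handled correctly: a minimal $\tilde g$-geodesic $\sigma$ has constant speed equal to $\tilde d(u,v)=d(x,y)$, its projection is a $g$-geodesic by part~(2) with the same speed, hence it coincides with $c$ when the latter is unique, and the uniqueness of horizontal lifts through $u$ finishes the identification $\sigma=\tilde c$.
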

We prepare a lemma and propositions for later use in \sref{sectionmain}.
\begin{lem}\label{innerprod}
Let $u\in \mathcal{O}(M)$, $A,B\in T_u\mathcal{O}(M)$ and $a\in O(d)$. Then
\[
\tilde{g}(R_{a*}A,R_{a*}B)=\tilde{g}(A,B).
\]
\end{lem}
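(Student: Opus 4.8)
The plan is to express $\tilde g$ intrinsically and track how its two constituent pieces transform under the right action $R_a$. Because $\{X_\alpha\}$ and $\{\varepsilon_k\}$ were chosen orthonormal, one can rewrite the metric, for $A,B\in T_u\mathcal O(M)$, as
\[
\tilde g(A,B)=\langle\theta(A),\theta(B)\rangle_{\mathfrak o(d)}+\langle\mathfrak s(A),\mathfrak s(B)\rangle_{\mathbb R^d},
\]
where $\langle\cdot,\cdot\rangle_{\mathfrak o(d)}$ is the standard inner product on $\mathfrak o(d)$ and $\langle\cdot,\cdot\rangle_{\mathbb R^d}$ the Euclidean one. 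Since $\tilde g(R_{a*}A,R_{a*}B)=(R_a^*\tilde g)(A,B)$, it is enough to identify $R_a^*\theta$ and $R_a^*\mathfrak s$ and then combine with invariance of these two inner products.

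First I would recall the transformation rule for the connection form: property (2) in the definition of $\theta$ gives $R_a^*\theta=\mathrm{Ad}(a^{-1})\circ\theta$. Next I would derive the transformation rule for the solder form. Using $\pi\circ R_a=\pi$, hence $\pi_*\circ R_{a*}=\pi_*$, one computes for $A\in T_u\mathcal O(M)$
\[
(R_a^*\mathfrak s)_u(A)=\mathfrak s_{ua}(R_{a*}A)=(ua)^{-1}\pi_*(R_{a*}A)=a^{-1}u^{-1}\pi_*A=a^{-1}\mathfrak s_u(A),
\]
so $R_a^*\mathfrak s=a^{-1}\circ\mathfrak s$.

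Substituting these into the intrinsic formula, I would obtain
\[
(R_a^*\tilde g)(A,B)=\langle\mathrm{Ad}(a^{-1})\theta(A),\mathrm{Ad}(a^{-1})\theta(B)\rangle_{\mathfrak o(d)}+\langle a^{-1}\mathfrak s(A),a^{-1}\mathfrak s(B)\rangle_{\mathbb R^d}.
\]
Since $a^{-1}\in O(d)$ preserves the Euclidean inner product on $\mathbb R^d$, the second term reduces to $\langle\mathfrak s(A),\mathfrak s(B)\rangle_{\mathbb R^d}$; since the standard inner product on $\mathfrak o(d)$ is $\mathrm{Ad}(O(d))$-invariant, as already noted in \sref{bundlemetric}, the first term reduces to $\langle\theta(A),\theta(B)\rangle_{\mathfrak o(d)}$. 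Adding the two gives $(R_a^*\tilde g)(A,B)=\tilde g(A,B)$, which is the assertion. I do not expect a genuine obstacle here: the only points requiring attention are the two transformation rules — the first is part of the definition of a connection form, and the second is a short computation resting on $\pi\circ R_a=\pi$ and the frame identity $u\mapsto ua$ — after which the conclusion is immediate from $\mathrm{Ad}$-invariance and orthogonality.
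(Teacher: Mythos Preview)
Your proof is correct and follows essentially the same approach as the paper: both rewrite $\tilde g$ via the inner products on $\mathfrak o(d)$ and $\mathbb R^d$, invoke $R_a^*\theta=\mathrm{Ad}(a^{-1})\theta$ from the definition of a connection form, compute $R_a^*\mathfrak s=a^{-1}\mathfrak s$ from $\pi\circ R_a=\pi$, and conclude by $\mathrm{Ad}$-invariance and orthogonality. The only cosmetic difference is that you phrase things in terms of pullbacks $R_a^*$ while the paper evaluates directly at $ua$.
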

\begin{proof}
By definition, it holds that
\begin{align*}
\tilde{g}(R_{a*}A,R_{a*}B)&=\langle \langle \theta (ua),R_{a*}A\rangle, \langle \theta (ua),R_{a*}B\rangle \rangle_{\mathfrak{o}(d)}\\
&\h +\langle \langle \mathfrak{s}(ua),R_{a*}A\rangle,\langle \mathfrak{s}(ua),R_{a*}B\rangle \rangle_{\mathbb{R}^d}.
\end{align*}
Since $\langle \theta (ua),R_{a*}A\rangle=Ad(a^{-1})\langle \theta (u),A\rangle$ and the metric on $\mathfrak{o}(d)$ is $Ad$-invariant,
\[
\langle \langle \theta (ua),R_{a*}A\rangle, \langle \theta (ua),R_{a*}B\rangle \rangle_{\mathfrak{o}(d)}=\langle \langle\theta (u),A\rangle,\langle \theta (u),B\rangle \rangle_{\mathfrak{o}(d)}.
\]
Furthermore,
\begin{align*}
\langle \mathfrak{s}(ua),R_{a*}A\rangle&=(ua)^{-1}\pi_*R_{a*}A\\
&=a^{-1}\circ u^{-1}(\pi \circ R_a)_*A\\
&=a^{-1}\circ u^{-1}(\pi _*A)\\
&=a^{-1}\langle \mathfrak{s}(u),A\rangle.
\end{align*}
Since $a$ is isometric,
\[
\langle \langle \mathfrak{s}(ua),R_{a*}A\rangle ,\langle \mathfrak{s}(ua),R_{a*}B\rangle \rangle_{\mathbb{R}^d}=\langle \langle \mathfrak{s}(u),A\rangle,\langle \mathfrak{s}(u),B\rangle \rangle_{\mathbb{R}^d}.
\]
Therefore $\tilde{g}(R_{a*}A,R_{a*}B)=\tilde{g}(A,B).$
\end{proof}
\begin{prop}\label{minimal}
Let $u,v\in \mathcal{O}(M)$. Then for all $a\in O(d)$,
\[
d_{\mathcal{O}(M)}(ua,va)= d_{\mathcal{O}(M)}(u,v),
\]
where $d_{\mathcal{O}(M)}$ is the Riemannian distance with respect to $\tilde g$.
\end{prop}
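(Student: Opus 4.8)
The plan is to obtain this at once from \lref{innerprod}: that lemma says every right translation $R_a$, $a\in O(d)$, is an isometry of $(\mathcal{O}(M),\tilde g)$ in the pointwise sense $\tilde g(R_{a*}A,R_{a*}B)=\tilde g(A,B)$, and it is a standard fact that a diffeomorphism preserving a Riemannian metric preserves the associated length distance. So the task reduces to carrying out this elementary argument.

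First I would recall that $d_{\mathcal{O}(M)}(u,v)$ is the infimum of the lengths $L(\gamma)=\int_0^1\sqrt{\tilde g(\gamma'(t),\gamma'(t))}\,dt$ over all piecewise $C^1$ curves $\gamma\colon[0,1]\to\mathcal{O}(M)$ with $\gamma(0)=u$, $\gamma(1)=v$ (with the usual convention that the distance is $+\infty$ when no such curve exists, which is relevant since $\mathcal{O}(M)$ need not be connected). Since $R_a$ is a diffeomorphism with inverse $R_{a^{-1}}$, composition with $R_a$ gives a bijection between curves joining $u$ to $v$ and curves joining $ua$ to $va$; in particular one family is empty iff the other is. For a curve $\gamma$ from $u$ to $v$, the velocity of $R_a\circ\gamma$ is $R_{a*}\gamma'(t)$, so by \lref{innerprod} the integrand defining the length is unchanged, whence $L(R_a\circ\gamma)=L(\gamma)$. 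Taking the infimum over all admissible $\gamma$ yields $d_{\mathcal{O}(M)}(ua,va)\le d_{\mathcal{O}(M)}(u,v)$.

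Applying this same inequality with $a^{-1}\in O(d)$ in place of $a$ and with the pair $(ua,va)$ in place of $(u,v)$ gives $d_{\mathcal{O}(M)}(u,v)\le d_{\mathcal{O}(M)}(ua,va)$, and combining the two inequalities proves the asserted equality. There is no genuine obstacle here; the only thing to verify is that $\gamma\mapsto R_a\circ\gamma$ is a length-preserving bijection between the relevant families of admissible curves, which is immediate from $R_a$ being a diffeomorphism together with \lref{innerprod}. Thus \pref{minimal} is a routine corollary of \lref{innerprod}.
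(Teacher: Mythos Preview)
Your proposal is correct and follows essentially the same approach as the paper: both use \lref{innerprod} to show that $R_a$ preserves the length of any curve, deduce $d_{\mathcal{O}(M)}(ua,va)\le d_{\mathcal{O}(M)}(u,v)$, and then obtain the reverse inequality by symmetry (replacing $a$ by $a^{-1}$). The only cosmetic difference is that the paper phrases the infimum step via an $\varepsilon$-approximating curve rather than a direct bijection argument.
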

\begin{proof}
For any $\varepsilon>0$, there exists a curve $\tau_{\varepsilon}:[0,1]\to \mathcal{O}(M)$ with $\tau_{\epsilon}(0)=u$, $\tau_{\epsilon}(1)=v$ satisfying $\displaystyle \left| \frac{d\tau_{\varepsilon}}{dt} \right| \leq d_{\mathcal{O}(M)}(u,v)+\varepsilon$. Then by \lref{innerprod},
\begin{align*}
\int_0^1\left| \frac{d}{dt}R_a\tau_{\varepsilon} \right| dt&=\int_0^1 \left| R_{a*}\frac{d\tau_{\varepsilon}}{dt}\right| dt\\
&=\int_0^1\left| \frac{d\tau_{\varepsilon}}{dt} \right| dt\\
&\le d_{\mathcal{O}(M)}(u,v)+\varepsilon.
\end{align*}
Thus
\[
d_{\mathcal{O}(M)}(ua,va)\leq d_{\mathcal{O}(M)}(u,v)+\varepsilon.
\]
Since $\varepsilon$ is arbitrary,
\[
d_{\mathcal{O}(M)}(ua,va)\leq d_{\mathcal{O}(M)}(u,v).
\]
This inequality holds for all $u,v\in \mathcal{O}$ and $a\in O(d)$. Therefore
\[
d_{\mathcal{O}(M)}(ua,va)= d_{\mathcal{O}(M)}(u,v),
\]
and the proposition follows.
\end{proof}

\begin{prop}\label{geo}
Let $u,v\in \mathcal{O}(M)$ and $\tau (t)\ (t\in [0,1])$ a minimal geodesic from $u$ to $v$. Then for all $a\in O(d)$, $R_a\tau$ is a minimal geodesic from $ua$ to $va$.
\end{prop}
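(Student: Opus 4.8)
The plan is to observe that $R_a$ is an isometry of $(\mathcal{O}(M),\tilde g)$ and to use the two standard facts that isometries carry geodesics to geodesics and preserve lengths, and then to invoke \pref{minimal} to promote ``geodesic'' to ``minimal geodesic''. So the argument splits into: (i) $R_a\tau$ is a geodesic from $ua$ to $va$; (ii) its length equals that of $\tau$, which by \pref{minimal} is exactly $d_{\mathcal{O}(M)}(ua,va)$.

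First I would record that $R_a:\mathcal{O}(M)\to\mathcal{O}(M)$ is a diffeomorphism with inverse $R_{a^{-1}}$, and that by \lref{innerprod} it satisfies $\tilde g(R_{a*}A,R_{a*}B)=\tilde g(A,B)$ for all $u\in\mathcal{O}(M)$ and $A,B\in T_u\mathcal{O}(M)$; hence $R_a$ is a Riemannian isometry. Since the Levi-Civita connection is uniquely determined by the metric, an isometry intertwines it: $R_{a*}(\tilde\nabla_X Y)=\tilde\nabla_{R_{a*}X}(R_{a*}Y)$. Writing $\sigma(t):=R_a\tau(t)$ we get $\dot\sigma(t)=R_{a*}\dot\tau(t)$, so $\tilde\nabla_{\dot\sigma}\dot\sigma=R_{a*}(\tilde\nabla_{\dot\tau}\dot\tau)=0$; thus $\sigma$ is a geodesic, running from $R_au=ua$ to $R_av=va$.

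Next I would compare lengths. Again by \lref{innerprod}, $|\dot\sigma(t)|=|R_{a*}\dot\tau(t)|=|\dot\tau(t)|$ for every $t\in[0,1]$, so the length of $\sigma$ equals the length of $\tau$. Since $\tau$ is a minimal geodesic from $u$ to $v$, its length is $d_{\mathcal{O}(M)}(u,v)$, and by \pref{minimal} this equals $d_{\mathcal{O}(M)}(ua,va)$. Hence $\sigma=R_a\tau$ is a geodesic joining $ua$ and $va$ whose length realizes their distance, i.e.\ it is a minimal geodesic.

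I do not expect a genuine obstacle here. The only point that needs a word of care is the justification that $R_a$, being an isometry, preserves the Levi-Civita connection and therefore sends geodesics to geodesics; this is the usual naturality of the Levi-Civita connection under isometries, and everything else is immediate from \lref{innerprod} together with \pref{minimal}.
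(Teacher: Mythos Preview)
Your proof is correct and follows essentially the same route as the paper: both rely on \lref{innerprod} for length preservation and \pref{minimal} to identify the length of $R_a\tau$ with $d_{\mathcal{O}(M)}(ua,va)$. The paper's argument is slightly leaner: it computes $\int_0^1|\frac{d}{dt}R_a\tau|\,dt=d_{\mathcal{O}(M)}(ua,va)$ directly and concludes minimality without separately invoking naturality of the Levi-Civita connection, since a constant-speed curve realizing the distance is automatically a geodesic; your step (i) is thus correct but not strictly needed.
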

\begin{proof}
By \lref{innerprod} and \pref{minimal}, we obtain
\begin{align*}
\int_0^1\left| \frac{d}{dt}R_a\tau \right| dt&=\int \left| R_{a*}\frac{d\tau}{dt} \right| dt\\
&=\int_0^1\left| \frac{d\tau}{dt} \right| dt\\
&=d_{\mathcal{O}(M)}(u,v)\\
&=d_{\mathcal{O}(M)}(ua,va).
\end{align*}
Therefore $R_a\tau$ is a minimal geodesic.
\end{proof}
\begin{bibdiv}
\begin{biblist}

\bibliography{b}
%\bib{BB}{article}{
  % author={},
   %author={},
   %title={},
   %journal={},
   %volume={},
   %date={},
   %pages={},
   %issn={},
%}

\bib{AVMU}{article}{
	author={Albeverio, Sergio},
	author={De Vecchi, Francesco Carlo},
	author={Morando, Paola},
	author={Ugolini, Stefania},
	title={Weak symmetries of stochastic differential equations driven by semimartingales with jumps},
	journal={Electron. J. Probab.},
   volume={25},
   date={2020},
   pages={1--34},
   issn={},
}

\bib{App95}{article}{
   author={Applebaum, David},
   title={A horizontal L\'evy process on the bundle of orthonormal frames over a complete Riemannian manifold},
   journal={S\'eminarie de Probabilit\'es. XXIX},
   volume={1613},
   date={1995},
   pages={166--180},
   issn={},
}

\bib{BB}{article}{
   author={B\"ar, Christian},
   author={Bleecker, D.},
   title={Connections on Solderd Principal Bundles},
   journal={Acta Physica Polonica B},
   volume={29},
   date={1998},
   pages={891--903},
   issn={},
}
\bib{Co1}{article}{
   author={Cohen, Serge},
   title={Géométrie différentielle stochastique avec sauts 1},
   journal={Stochastics: An International Journal of Probability and Stochastic Process},
   volume={56},
   date={1996},
   pages={179--203},
}
\bib{Co2}{article}{
   author={Cohen, Serge},
   title={Géométrie différentielle stochastique avec sauts 2: discrétisation et applications des eds avec sacutes},
   journal={Stochastics: An International Journal of Probability and Stochastic Process},
   volume={56},
   date={1996},
   pages={205--225},
}
\bib{Co3}{article}{
   author={Cohen, Serge},
   title={Some Markov properties of stochastic differential equations with jumps},
   journal={S\'eminaire de Probabilit\'es de Strasbourg},
   volume={29},
   date={1995},
   pages={181--193},
   language = {},
}
\bib{IkeWata}{book}{
	author = {Ikeda, Nobuyuki},
	author = {Watanabe, Shinzo}
	title = {Stochastic differential equations and diffusion processes, 2nd edition},
	series={North-Holland Mathematical Library},
	publisher = {North-Holland},
	volume = {24},
	year = {1989},
	pages = {}
}
%\bib{Ito62}{article}{
  % author={Ito, Kiyoshi},
   %title={The Brownian motion and tensor fields on Riemannian manifold},
   %journal={Proc. Intern. Congr. Mathemat., (Stockholm)},
   %volume={},
   %date={1962},
   %pages={536--539},
   %language = {},
%}
%\bib{Ito75}{article}{
   %author={Ito, Kiyoshi},
   %title={Stochastic parallel displacement},
   %journal={Probabilistic Methods in Differential Equations (Proc. Conf., Univ. Victoria, Victoria, B.C., 1974)},
   %volume={451},
   %date={1975},
   %pages={1--7},
   %language = {},
%}
\bib{Lio}{article}{
	author = {Da\ Lio, F.},
	author = {Rivi\`ere, Tristan}
	title = {Three-term commutator estimates and the regularity of $\frac{1}{2}$-harmonic maps into spheres},
	journal = {Anal. PDE},
	publisher = {mathematical sciences publishers},
	volume = {4},
	number = {1}
	year = {2011},
	pages = {149--190}
}
\bib{Lio2}{article}{
author = {Da\ Lio, F.},
	author = {Rivi\`ere, Tristan}
	title = {Sub-criticality of non-local Schr\"{o}dinger systems with antisymmetric potentials and applications to half-harmonic maps },
	journal = {Adv. Math.},
	publisher = {},
	volume = {227},
	number = {}
	year = {2011},
	pages = {1300-1348}
}
\bib{Vecc}{article}{
   author={De Vecchi, Francesco Carlo},
   title={Lie symmetry analysis and geometrical methods for finite and infinite dimensional stochastic differential equations},
   journal={Ph.D. thesis, Universit\`a degli Studi di Milano},
   volume={},
   date={2018},
   pages={},
   issn={},
}
\bib{Hsu}{book}{
	author = {Hsu, Elton P.},
	title = {Stochastic Analysis on Manifolds},
	series={Graduate Studies in Mathematics},
	publisher = {American Mathematical Society},
	volume = {38},
	year = {2002},
	pages = {}
}

\bib{KN}{book}{
	author = {Kobayashi, Shoshichi},
	author = {Nomizu, Katsumi}
	title = {Foundations of Differential Geometry, Vol. I},
	series={Wiley Classics Library},
	publisher = {Interscience Publisher},
	volume = {},
	year = {1963},
	pages = {}
}

\bib{Maillard03}{article}{
   author={Maillard-Teyssier, Laurence},
   title={Calcul Stochastique Covariant \`a Sauts \& Calcul Stochastique \`a Sauts Covariants},
   journal={Th\`ese de doctorat - Universit\'e de Versailles Saint Quentin en Yvelines},
   volume={},
   date={2003},
   pages={},
   issn={},
}
\bib{Maillard06}{article}{
  title={Stochastic Covariant Calculus with Jumps and Stochastic Calculus with Covariant Jumps},
  author={Maillard-Teyssier, Laurence},
  journal = {in: S\'eminaire de Probabilit\'es XXXIX, In Memoriam Paul-Andr\'e Meyer, in: Lecture Notes in Math., vol. 1874},
  pages = {381--417},
  volume = {},
  piblisher = {Springer, Berlin, New York},
  year={2006}
}
\bib{Mal}{article}{
   author={Malliavin, Paul},
   title={Formules de la moyenne, calcul de perturbation et th\'eor\'emes d'annulation pour 
les formes harmoniques},
   journal={Journal of Functional Analysis},
   volume={17},
   date={1974},
   pages={247--291},
   issn={},
}

\bib{Oka23}{article}{
     author = {Okazaki, Fumiya},
     title = {Convergence of martingales with jumps on submanifolds of Euclidean spaces and its applications to harmonic maps},
     journal = {published online in Journal of Theoretical Probability},
     publisher = {},
     volume = {},
     number = {},
     year = {2023},
     pages = {},
     zbl = {},
}
\bib{Oka24}{article}{
     author = {Okazaki, Fumiya},
     title = {Probabilistic characterization of weakly harmonic maps for non-local Dirichlet forms},
     journal = {published online in Potential Analysis},
     publisher = {},
     volume = {},
     number = {},
     year = {2024},
     pages = {},
     zbl = {},
}
\bib{Par}{book}{
   author={Parthasarathy, T.},
   title={Selection Theorems and their Applications},
   series={Lecture Notes in Mathematics},
   edition={},
   publisher={Springer Berlin, Heidelberg},
   date={1972},
   pages={},
}
\bib{Pic1}{article}{
     author = {Picard, Jean},
     title = {Calcul stochastique avec sauts sur une vari\'et\'e},
     journal = {S\'eminaire de Probabilit\'es de Strasbourg},
     publisher = {Springer - Lecture Notes in Mathematics},
     volume = {25},
     year = {1991},
     pages = {196-219},
     zbl = {0749.60043},
     mrnumber = {1187781},
     url = {http://www.numdam.org/item/SPS_1991__25__196_0}
}
\bib{PE}{article}{
   author={Pontier, Monique},
   author={Estrade, Anne }
   title={Relèvement horizontal d'une semimartingale càdlàg},
   journal={Séminaire de Probabilités de Strasbourg},
   volume={26},
   date={1992},
   pages={127--145},
}
\bib{Shi}{article}{
  title={On stochastic horizontal lifts},
  author={Shigekawa, Ichiro},
  journal={Zeitschrift f\"ur Wahrscheinlichkeitstheorie und Verwandte Gebiete},
  year={1982},
  volume={59},
  pages={211-221}
}
%和書は MathSciNet からは出力出来ない. 以下のように記述する.

\end{biblist}
\end{bibdiv}

\end{document}